\newtheorem{thm}{Theorem}[section]
\newtheorem{lem}[thm]{Lemma}
\newtheorem{defi}[thm]{Definition}
\newtheorem{pro}[thm]{Proposition}
\def\blfootnote{\xdef\@thefnmark{}\@footnotetext}
\numberwithin{equation}{section}
\newcommand{\R}{\mathbb R}
\def\m{\mathbb}    \def\mb{\mathbf}		
\def\eps{\epsilon}  \def\a{\alpha}  \def\b{\beta}  \def\th{\theta}	   \def\si {\sigma}
  	 \def\ld{\lambda}
\def\la{\langle}  \def\ra{\rangle}  \def\wh{\widehat}
\def\l{\left}  \def\r{\right}
\def\be{\begin{equation}}     \def\ee{\end{equation}}
\def\bp{\begin{pmatrix}}	\def\ep{\end{pmatrix}}
\def\H{\mathcal{H}}
\def\X{\mathcal{X}}
\date{}
\begin{document}

\title{Lower Regularity Solutions of the Non-homogeneous Boundary-Value  Problem for a Higher Order Boussinesq Equation in a Quarter Plane}

\author{Shenghao Li\footnote{School of Mathematical Sciences,University of Electronic Science and Technology of China, Chengdu, PR China, email: lish@uestc.edu.cn}, 
	Min Chen\footnote{Department of Mathematics,  Purdue University, West Lafayette, IN, USA, email: chen@math.purdue.edu}, 
	Xin Yang\footnote{Department of Mathematics, University of California at Riverside,  Riverside, CA 92521, USA, email: xiny@ucr.edu},  
	Bing-Yu Zhang\footnote{ Department of Mathematical Sciences, University of Cincinnati, Cincinnati, OH 45221, USA, email: zhangb@ucmail.uc.edu}}
\date{}
\maketitle

\begin{abstract}
We continue to study the  initial-boundary-value problem of the sixth order Boussinesq equation in a quarter plane
with non-homogeneous boundary conditions:
\begin{equation*}
\begin{cases}
u_{tt}-u_{xx}+\beta u_{xxxx}-u_{xxxxxx}+(u^2)_{xx}=0,\quad x,t\in \mathbb{R}^+,\\
u(x,0)=\varphi (x), u_t(x,0)=\psi ''(x),  \\
u(0,t)=h_1(t), u_{xx}(0,t)=h_2(t), u_{xxxx}(0,t)=h_3(t),
\end{cases}
\end{equation*}
where $\beta=\pm1$. We  show that the problem is  locally analytically well-posed  in  the space $H^s(\mathbb{R}^+)$ for   any $ s> -\frac34 $ with the initial-value data $$(\varphi,\psi)\in H^s(\mathbb{R}^+)\times H^{s-1}(\mathbb{R}^+)$$ and  the boundary-value data 
$$(h_1,h_2,h_3) \in H^{\frac{s+1}{3}}(\mathbb{R}^+)\times H^{\frac{s-1}{3}}(\mathbb{R}^+)\times H^{\frac{s-3}{3}}(\mathbb{R}^+).$$
\end{abstract}

\blfootnote{2020 Mathematics Subject Classification.  35Q53; 35Q55; 35Q35. }

\blfootnote{Key words and phrases.  boundary value problem; Bourgain space; sixth order Boussinesq equation. }



\section{Introduction}

\qquad

In this article, we continue the study  for  the initial-boundary-value problem of the sixth order Boussinesq equation (SOBE)  posed in  the quarter plane $\R^+ \times \R^+= (0, \infty)\times (0, \infty)$,
\begin{equation}\label{sobe}
\begin{cases}
u_{tt}-u_{xx}+\beta u_{xxxx}-u_{xxxxxx}+(u^2)_{xx}=0,\quad x,t\in \R^+,\\
u(x,0)=\varphi (x), u_t(x,0)=\psi ''(x),  \\
u(0,t)=h_1(t), u_{xx}(0,t)=h_2(t), u_{xxxx}(0,t)=h_3(t),
\end{cases}
\end{equation}
with $\b=\pm1 $ which  can serve as a model for waves generated by a wave maker at one end of  a channel, or for waves approaching shallow water from deep water.
The SOBE was first introduced by Christov, Maugin and Velarde \cite{28} with $\beta=-1$ and was initially derived from the Euler equation with shallow-wave assumption  intending to  amend  the ill-posed   modeling problem of the original Boussinesq equation
\begin{equation*}
u_{tt}-u_{xx}- u_{xxxx}+(u^2)_{xx}=0,
\end{equation*}
which had been used in a considerable range of applications such as coast and harbor engineering, simulation of tides and tsunamis. The SOBE was also proposed in modeling the nonlinear lattice dynamics in elastic crystals by Maugin \cite{66}. 

Plenty of  theoretical study has been placed on the SOBE not only due to its   reliable physical backgrounds but also for its structural similarities to both the KdV equation and the ``good" Boussinesq equation\footnote{We will refer the Boussinesq equation as the ``good" Boussiensq equation in the rest of the article.}(cf. \cite{96-2}).  Of all the mathematical studies on the SOBE, one of the most fundamental problems is its well-posedness issue which includes the initial value problem (IVP) and the initial boundary value problem (IBVP).

The theories on the IVP of the SOBE have been well-developed thanks to the tremendous achievements on the IVPs of dispersive equations, such as the KdV equation and the Bousssinesq equation in particular, during the past decades.  Esfahani, Farah and Wang \cite{35} first showed that the following system,
\begin{equation}\label{ivp}
\begin{cases}
u_{tt}-u_{xx}+(u^2)_{xx}+\beta u_{xxxx}-u_{xxxxxx}=0,\quad x\in \R,\\
u(x,0)=\varphi (x), u_t(x,0)= \psi''(x),
\end{cases}
\end{equation}
is locally analytically  well-posed in $H^s(\R)$  for initial data  $(\varphi,\psi)\in  H^s(\mathbb{R})\times H^{s-1}(\mathbb{R})$ with $s=0,1$, by applying the Strichartz type smoothing which inherited Linares' work \cite{59} on the Boussinesq equation.  Later, Esfahani and Farah \cite{34}  established the conclusion for  $s>-\frac{1}{2}$ by using a related Bourgain space and estimates inherited from Kenig, Ponce and Vega's work \cite{50} on the KdV equation. Finally, Esfahani and Wang \cite{37}  improved the result to $s>-\frac{3}{4}$  by using the $[k;Z]$-multiplier norm method introduced by Tao \cite{tao1}. 

 On the other hand, the less-developed theories on the IBVP of the dispersive equations raised  many open problems.
 Among different works on various dispersive equations,  the study on the KdV equation is relatively mature. Consider the KdV equation posed in a quarter plane,
 \begin{equation}\label{y-1}
 \begin{cases}
 u_t+u_x+u u_x+u_{xxx}=0, \quad (x,t)\in \R^+\times \R^+,\\
 u(x,0)=\phi(x),u(0,t)=h(t).\\
 \end{cases}
 \end{equation}
By applying the Laplace transform with respect to the temporal variable $t$ to find an explicit solution formula for the associated linear problem,
  \begin{equation} \label{y-2}
 \begin{cases}
 u_t+u_x+ u_{xxx}=0, \quad (x,t)\in \R^+\times \R^+,\\
 u(x,0)=0,u(0,t)=h(t),\\
 \end{cases}
 \end{equation}
Bona-Sun-Zhang   \cite{17} showed that  the IBVP (\ref{y-1}) is locally  analytically  well-posed in the space $H^s(\R^+)$ for $s>\frac34 $  with   $(\phi ,h)\in H^s(\R^+)\times H^{\frac{s+1}{3}}(\R^+) $  by adapting the approach developed by Kenig, Ponce and Vega  \cite{53} in dealing  with the IVP of the KdV equation.  During the same period,   Colliander-Kenig and Holmer   developed a different approach to handle the linear IBVP (\ref{y-2})  and showed that the IVP (\ref{y-1})  is  locally analytically well-posed   in the space $H^s(\R^+)$  for $s\geq 0$ \cite{31} and $s>-\frac34$ \cite{47} using a related Bourgain space, $X^{s,b}_{KdV}$, defined as 
  \[\|w\|_{X_{KdV}^{s,b}}=\left(\int_\R\int_\R (1+|\xi|)^{2s} (1+|\tau-(\xi^3-\xi)|)^{2b} |\hat{w}(\xi,\tau)|d\xi d\tau\right)^{\frac12}.\]
 Though both methods obtain the result for $s>-\frac34$, 
the result in \cite{15} is slightly stronger since it allows the critical value $b=\frac12$ in the corresponding $X^{s,b}$-space. This inspires our study on the improvement of the SOBE.
 
Both approaches  developed in \cite{15} and  \cite{47} have been applied to other dispersive equations. However, there are still many open problems left. For example, unlike the KdV equations, there are numerous dispersive equations, including  the nonlinear Schr\"odinger equation and the Boussinesq equation, sharp regularity results on  IBVPs when compared to their IVPs are not achieved (See e.g. \cite{1,89,cava,97,holmer,55}).  More precisely,  the Cauchy problem for the Boussinesq equation  can achieve its  analytically well-posedness in the space $H^s(\R)$ for $s>-\frac12$ \cite{55}, however, the best result on its IBVP  on the quarter plane  by far  is  $s>-\frac14$ \cite{97}.  Similar problem also appears in early study on the SOBE. For instance, it has been shown in \cite{96-1} that  the IBVP  of the SOBE posed in the quarter plane  is locally  analytically well-posed in the space $H^s (\R^+)$    for $s>-\frac12$ while the one for its IVP can be as low as $s>-\frac34$.

In review of the studies on the  IVP for dispersive equations under the $X^{s,b}$-spaces, the number $b$ is usually chosen to be strictly larger than $\frac12$ (See e.g. \cite{37,41,53,52,50,51,55}), while for the IBVPs, $b$ has to be strictly less than $\frac12$ (See e.g. \cite{15,14,cava,31,97,95,holmer,47,96-1}). Such a difference will sometimes  bring difficulties in the bilinear estimates and other related linear estimates,  which further prevents one obtaining sharp results as mentioned above. Moreover,  in the study of the  IVP for general dispersive equations, Tao \cite{tao1} introduced a systematical theory to deduct sharp estimate on the multi-linear estimate problem  on $X^{s,b}$ for $b>\frac12$.  Combining the ideas in \cite{15} and  \cite{tao1}, the result in \cite{96-2} allowed $b =\frac12$.  
In this article, we will continue the  study for the well-posedness of the SOBE and show that the IBVP is  locally analytically  well-posed in $H^s (\R^+)$  for $s>-\frac34$. Moreover, since the SOBE has some structural similarities (See e.g. \cite{96,96-1}) to   the Boussinesq equation and the nonlinear Schr\"odinger equation,  this may shed some light on the study for these two equations on related IBVPs.

 Let us first present the  local well-posedness result for system \eqref{sobe}. We  denote $\vec{h}:=(h_1,h_2,h_3)\in \H^s(\R^+)$ with $$\H^s(\R^+):=H^{\frac{s+1}{3}}(\R^+)\times H^{\frac{s-1}{3}}(\R^+)\times H^{\frac{s-3}{3}}(\R^+).$$ 
We also set $\langle x\rangle=\sqrt{1+x^2}$ and introduce  a related modified  Bougain-type space, $X_{SOBE}^{s,b}(\R^2)$,  for the sixth order Boussinesq equation as follows.

\begin{defi}\label{defi1}  ($X^{s,b,\si}$ space and its norm)
	\begin{itemize}
		
	\item[(i)] For $s,b\in\R $ and $ \sigma  \in \R^+$, define $X_{SOBE}^{s,b}$  and $\Lambda_\si$ to be the completions of the Schwartz class ${\mathcal S}(\R^2)$ with respect to the following norms
	\[\|w\|_{X_{SOBE}^{s,b}(\R^2)}=\left\|\langle\xi\rangle^s\langle|\tau|-\phi^{*}(\xi)\rangle^b\widehat{w}(\xi,\tau)\right\|_{L^2_{\xi,\tau}(\R^2)},\]
	\[\|w\|_{\Lambda_{\si}(\R^2)}=\left\|\chi_{|\xi|\leq 1}\langle\tau\rangle^\si \widehat{w}
(\xi,\tau)\right\|_{L^2_{\xi,\tau}(\R^2)},\]
	where $\phi^{*}(\xi)=\sqrt{\xi^6+\beta \xi^4+\xi^2}$ and $\widehat{{ w }}$ is  the Fourier transform on both  time and space of $w$.
	
	\item[(ii)]  For any $\Omega \subset \R^+$, 
	 \[  X_{SOBE}^{s,b}\left(\R^+\times \Omega\right):=X_{SOBE}^{s,b}|_{\R^+\times \Omega}\]
with the quotient norm,
\be\label{quotient norm}
\|u\|_{X_{SOBE}^{s,b}\left(\R^+\times \Omega\right)}:=\inf_{w\in X_{SOBE}^{s,b}(\R^2)}\{\|w\|_{X_{SOBE}^{s,b}(\R^2)}: w(x,t)=u(x,t)\ \mbox{on} \ \R^+\times \Omega\}.\ee

\item[(iii)]   $X^{s,b,\si}$ is the set of  all functions $w$ satisfying,
\[\|w\|_{X^{s,b,\si}(\R^2)}:=\|w\|_{X_{SOBE}^{s,b}(\R^2)}+\|w\|_{\Lambda_{\si}(\R^2)}< \infty\]
and for any $T>0$,
\[ X^{s,b ,\si}_T:=X^{s, b,\si}(\R^+\times (0,T) ).\]  
Moreover,
\[\X^{s,b,\si}_T:= C([0,T]; H^s(\R^+))\cap X^{s,b,\si} _T\]
with the norm
\[\|w\|_{\X^{s, b,\si}_T}=\l(\sup_{t\in (0,T)} \|w(\cdot,t)\|^2_{H^s(\R^+)}+\|w\|_{X^{s,b,\si}_T}\r)^{\frac12}.\]
	
	\end{itemize}
\end{defi}

\noindent With these notations, the local well-posedness result is stated as below.

\begin{thm}\label{conditional}
Let $-\frac34 <s\leq -\frac12$ and $r>0$  be given.  There exists $ \sigma_0=\sigma_0(s)\in(\frac12,1) $ such that for any $ \sigma\in(\frac12,\sigma_0] $, we can find  $T=T(r,s,\sigma) >0$   such that  if
 \[(\varphi,\psi,\vec{h})\in H^s(\R^+) \times H^{s-1}(\R^+)\times \H^s(\R^+)\]
 with
 $$ \ \ \|(\varphi, \psi)\|_{H^s(\R^+)\times H^{s-1}(\R^+)}+ \|\vec{h}\|_{\H^s(\R^+)}\leq r,$$
 then the IBVP \eqref{sobe} admits a unique solution 
 \[ u\in C([0,T];H^s (\R^+)) \]
 satisfying
 \begin{equation}
 \label{cond1} u\in \X_{T}^{s,\frac12,\si}
 \end{equation} 
 with
 \[ \| u\|_{\X_{T}^{s,\frac12,\si}  } \leq \alpha _{r,s,\sigma}  \left (\| (\varphi, \psi, \vec{h} )\|_{H^s(\R^+) \times H^{s-1}(\R^+)\times \H^s(\R^+)} \right )  \]
 where $\alpha _{r,s,\sigma}:\R^+\to \R^+ $ is a continuous function depending only  on $r$,  $s$ and $ \sigma $. Moreover  the corresponding solution map is real analytic.

 \end{thm}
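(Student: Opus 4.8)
The plan is to recast the IBVP \eqref{sobe} as an integral (Duhamel) equation on the whole line and close a fixed-point argument in the space $\X^{s,\frac12,\si}_T$, following the Colliander--Kenig--Holmer philosophy but with the critical exponent $b=\frac12$ as in the IVP theory of the SOBE. First I would reduce the problem: since $(u^2)_{xx}$ is a spatial second derivative, it is natural to write $u_t$ in terms of a potential, introduce $v$ with $v_x = u$ (or work directly with the factored first-order-in-time system via the symbol $\phi^*(\xi)=\sqrt{\xi^6+\beta\xi^4+\xi^2}$), and split the solution as $u = u_{\mathrm{lin}}^{\mathrm{IVP}} + u_{\mathrm{bdry}} + u_{\mathrm{nl}}$, where $u_{\mathrm{lin}}^{\mathrm{IVP}}$ solves the linear IVP with data $(\varphi,\psi)$ suitably extended to $\R$, $u_{\mathrm{bdry}}$ is the boundary carrier operator solving the linear IBVP with the corrected boundary data $\vec h - (\text{traces of }u_{\mathrm{lin}}^{\mathrm{IVP}}) - (\text{traces of }u_{\mathrm{nl}})$, and $u_{\mathrm{nl}}$ is the Duhamel term for the forcing $-(u^2)_{xx}$. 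The boundary carrier operator should be constructed via the Laplace transform in $t$, using the three characteristic roots of $\mu^6 - \beta\mu^4 + \mu^2 + (\text{frequency})^2$-type equation that decay into $x>0$, exactly paralleling the KdV boundary operator of Colliander--Kenig.

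The key steps, in order, would be: (1) establish the linear estimates for the free SOBE group in $\X^{s,\frac12,\si}_T$ — boundedness of the group, of the Duhamel map, and of the spatial/temporal trace operators (the $H^{(s+1)/3}$, $H^{(s-1)/3}$, $H^{(s-3)/3}$ trace regularities are dictated by the cube-root scaling $\tau\sim\xi^3$ in $\phi^*$); here the auxiliary low-frequency norm $\Lambda_\si$ is the device that compensates for the loss at $b=\frac12$ and for the weak dispersion near $\xi=0$, which is why $\si\in(\tfrac12,\si_0]$ must be chosen carefully. (2) Prove continuity and trace estimates for the boundary carrier operator $u_{\mathrm{bdry}}$ in the same space, using contour deformation and the analyticity of the decaying characteristic roots — this requires separate analysis in the low-frequency ($|\xi|\le 1$) and high-frequency regimes. (3) Prove the bilinear estimate $\|(u^2)_{xx}\|_{X^{s,-\frac12+\epsilon,\cdot}} \lesssim \|u\|_{X^{s,\frac12,\si}}^2$ for $-\tfrac34<s\le -\tfrac12$; by the structural similarity to KdV noted in the introduction, this should follow from the $[k;Z]$-multiplier method of Tao together with the $b=\tfrac12$ refinement already available for the SOBE IVP (\cite{96-2,37}), after accounting for the extra smoothing from the two $x$-derivatives being offset by the order-three dispersion. (4) Run the contraction in $\X^{s,\frac12,\si}_T$ on a ball of radius $\sim r$, choosing $T=T(r,s,\si)$ small so the nonlinear and boundary-correction maps are contractions; uniqueness and real-analytic dependence on the data then follow from the analytic implicit function theorem, since every map in the iteration is (multi)linear or the inverse of an analytic map.

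The main obstacle I expect is step (3) at the critical regularity $b=\tfrac12$: the standard IBVP bilinear estimates use $b<\tfrac12$, and pushing to $b=\tfrac12$ while keeping $s>-\tfrac34$ forces one to control resonant interactions where $|\tau|-\phi^*(\xi)$ is small simultaneously for all three frequencies, precisely the regime where the $X^{s,b}$ weight gives no help; the $\Lambda_\si$ norm must absorb exactly these near-resonant low-frequency contributions, and verifying that a single $\si_0(s)<1$ works uniformly (and that the boundary operator also maps into $\Lambda_\si$) is the delicate point. A secondary difficulty is matching the traces: one must verify that the traces of $u_{\mathrm{nl}}$ lie in the boundary spaces $H^{(s\pm?)/3}(\R^+)$ with the correct regularity so that the boundary-data correction $\vec h - (\text{traces})$ stays in $\H^s(\R^+)$, which couples steps (1)--(3) and must be handled as a fixed point for the full triple $(u_{\mathrm{lin}},u_{\mathrm{bdry}},u_{\mathrm{nl}})$ simultaneously rather than sequentially.
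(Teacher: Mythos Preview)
Your proposal is essentially correct and follows the same architecture as the paper: decompose $u$ into a whole-line free piece, a Duhamel piece, and a boundary-carrier piece built by Laplace transform, then close a contraction in $\X^{s,\frac12,\si}$ using a bilinear estimate at the critical index $b=\tfrac12$ with the auxiliary $\Lambda_\si$ norm to handle near-resonant low frequencies. Three concrete refinements in the paper are worth flagging, since your outline either omits them or treats them as unresolved.

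First, the paper does \emph{not} close by taking $T$ small. Instead it rescales via $u^\lambda(x,t)=\lambda^{-4}u(\lambda^{-1}x,\lambda^{-3}t)$, which simultaneously drives the data norms and the lower-order coefficient $\alpha=\lambda^{-2}$ to zero; the contraction is then run for $T=1$ with small data (Theorem~5.1), and the original statement follows by undoing the scaling. This sidesteps the need to track $T^\epsilon$ factors through the boundary operator and the $\Lambda_\si$ component, which is delicate at $b=\tfrac12$.

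Second, and this is the key point behind your ``secondary difficulty'': the paper's bilinear estimate (Theorem~4.1) is not merely $\|(u^2)_{xx}\|_{X^{s,\si-1}}\lesssim\|u\|^2_{X^{s,\frac12,\si}}$ but the stronger
\[
\Big\|\Big(\tfrac{\xi^2\widehat{uv}}{\phi(\xi)}\Big)^\vee\Big\|_{X^{s+2,\si-1}}\leq C\|u\|_{X^{s,\frac12,\si}}\|v\|_{X^{s,\frac12,\si}},
\]
i.e.\ a two-derivative smoothing gain on the output. This extra smoothing is precisely what makes the Kato-type trace estimate for the Duhamel term (Proposition~3.4, which needs $X^{s+2,\si-1}$ on the right) close, so that the traces $\vec q$ land back in $\mathcal H^s(\R^+)$. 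Your sketch states the bilinear estimate without this gain and then lists the trace matching as an open issue; in the paper the former resolves the latter.

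Third, for the boundary operator in $X^{s,\frac12}$ with $s\le -\tfrac12$, the zero extension to $x<0$ (which you might have in mind from Colliander--Kenig) fails; the paper instead uses the Bona--Sun--Zhang extension from the KdV quarter-plane theory, exploiting the structural similarity $\||\tau|-\phi(\xi)|\sim||\tau|-|\xi|^3-\tfrac{\alpha\beta}{2}|\xi||$ to port that construction over (Lemma~3.3).
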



To obtain this    well-posedness,  we apply  the same  approach as that used in the earlier work \cite{96-1}  but with two key necessary  modifications. 

First, we extend the boundary operator to the whole line differently. Same as the previous work, by using the Laplace transform to the IBVP for the linear equation,
\begin{equation}\label{01}
\begin{cases}
u_{tt}-u_{xx}+\beta u_{xxxx}-u_{xxxxxx}=0,\quad x>0,t>0,\\
u(x,0)=0, u_t(x,0)=0, \\
u(0,t)=h_1(t), u_{xx}(0,t)=h_2(t), u_{xxxx}(0,t)=h_3(t),
\end{cases}
\end{equation}
we obtain an explicit representation for the solution, and denoted the solution as a boundary operator in the form $u=W_{bdr}(\vec{h})$. 
We then establish similar estimates  as those in \cite{96-1}:
\begin{align*}
\sup_{t\geq 0} \|u(\cdot,t)\|_{H_x^s(\mathbb{R}^+)} + \sup_{x \geq 0}\|\partial&^j_xu(x,\cdot)\| _{H^{\frac{s-j+1}{3}}(\mathbb{R}^+)}+\|\eta(t)u(x,t)\|_{X_{SOBE}^{s,b}}\lesssim \|\vec{h}\|_{\H^s(\R^+)},
\end{align*}
with $j=0,2,4$ and $\eta$ being a cut-off function, but for $-\frac34<s\leq \frac12$ and $b=\frac12$. Since $u=W_{bdr}(\vec{h})$ is defined only on the quarter plane, $\R^+\times \R^+$,  we need to adopt a proper extension to evaluate it in the $X^{s,b}_{SOBE}(\R^2)$-space. In \cite{96-1}, we use a simple but effective ``zero extension"  inherited from  the IBVP for the Boussinesq equation \cite{97} to establish the desired estimate for $s>-\frac12$ with $b\leq \frac12$. However, estimates under such an extension do not hold for any dispersive equations with $s\leq -\frac12$  (See. \cite{96-2}). Therefore, we will need a more delicate extension to achieve estimates under a lower regularity condition. Thanks to the similarity in structure between the SOBE and the KdV equation, we will be able to establish desired estimates under a similar extension as in \cite{15}.

Secondly, we will establish some new bilinear estimates  for $s\leq-\frac12$ and $b=\frac12$.  In \cite{96-1}, we followed the method that came from the study on the KdV and Boussinesq equations \cite{31,97,47} by setting $b<\frac12$. However, regardless of the restriction on $s$ for the related linear estimates on the boundary operator, we are not able to achieve a proper bilinear estimate for $s\leq-\frac12$. The new extension allows us to consider the IBVP in a related Bourgain space $X^{s,b}$ with $b\leq\frac12$, and, as   mentioned earlier, we can deal with the case when $b\geq \frac12$. These two coincide on $b=\frac12$, which makes it possible to find a way to overcome the difficulties in the bilinear estimates in low regularity space.


The paper is organized as follows. In section 2, we will re-scale the system and introduce an alternative version of our main theorem. In addition, basic notations and lemmas, and existing results one the related IVP will also be introduced. In section 3, various linear estimates for different corresponding linear problems are discussed under proper spaces. The bilinear estimates for $s<-\frac12$ and $b=\frac12$ will be established  in Section 4.  The final section is used to  prove the local well-posedness  by contraction mapping principal. 

\section{Preliminary}

\setcounter{equation}{0}

In order to simplify the proof, we first re-scale the original IBVP problem:
\begin{equation}\label{fullpro}
\begin{cases}
u_{tt}-u_{xx}+\beta u_{xxxx}-u_{xxxxxx} +(u^2)_{xx}=0,\quad x>0\mbox{, }t>0,\\
u(x,0)=\varphi(x), u_t(x,0)=\psi''(x), \\
u(0,t)=h_1(t), u_{xx}(0,t)=h_2(t), u_{xxxx}(0,t)=h_3(t).
\end{cases}
\end{equation}
\noindent  by introducing  
\[u^\ld(x,t):= \ld^{-4} u(\ld^{-1} x,\ld^{-3} t), \quad x,t\in\R^+,\]
for $\ld\geq 1$. Then,  the IBVP (\ref{fullpro}) is  reduced to 
\begin{equation}\label{fullred}
\begin{cases}
u^\ld_{tt}-\a^2 u^\ld_{xx}+\a \beta u^\ld_{xxxx}-u^\ld_{xxxxxx} +((u^\ld)^2)_{xx}=0,\\
u^\ld(x,0)=\varphi^\ld(x), u^\ld_t(x,0)=(\psi^\ld)''(x), \\
u^\ld(0,t)=h^\ld_1(t), u^\ld_{xx}(0,t)=h^\ld_2(t), u^\ld_{xxxx}(0,t)=h^\ld_3(t),
\end{cases}
\end{equation}
with $\a=\ld^{-2}$,
\[\varphi^\ld(x)=\ld^{-4}\varphi(\ld^{-1}x), \quad  \psi^\ld(x)=\ld^{-5}\psi(\ld^{-1}x),\]
and
\[h_1^\ld(t)= \ld^{-4}h_1(\ld^{-3}t),\ \ h_2^\ld(t)= \ld^{-6}h_2(\ld^{-3}t),\ \ h_3^\ld(t)= \ld^{-8}h_3(\ld^{-3}t).\]
For $\ld \geq 1$ and $s\geq -1$, one has
\[\begin{cases}
\|(\varphi^\ld, \psi^\ld)\|_{H^s(\R^+)\times H^{s-1}(\R^+)}\leq \ld^{-\frac52}\|(\varphi , \psi ) \|_{H^s(\R^+)\times H^{s-1}(\R^+)},\\
\|(h_1^\ld, h_2^\ld,h^\ld_3)\|_{\H^s(\R^+)}\leq \ld^{-\frac52} \|(h_1, h_2,h_3)\|_{\H^s(\R^+)}.
\end{cases}\]
Hence, as $\ld\rightarrow \infty$, we have
\[\a\rightarrow 0, \ \ \|(\varphi^\ld, \psi^\ld)\|_{H^s(\R^+)\times H^{s-1}(\R^+)}\rightarrow 0, \ \ \|(h_1^\ld, h_2^\ld,h^\ld_3)\|_{\H^s(\R^+)}\rightarrow 0.\]
Thus, in order to prove local well-posedness of \eqref{fullpro}, it suffices to consider the case when $$\a,\quad\|(\varphi^\ld, \psi^\ld)\|_{H^s(\R^+)\times H^{s-1}(\R^+)},\quad \|(h_1^\ld, h_2^\ld,h^\ld_3)\|_{\H^s(\R^+)}$$ are sufficiently small. With such re-scaling, an alternative version of theorem for local well-posedness, which is also equivalent to Theorem \ref{conditional}, will be presented in Section 5. 

For the sake of simplicity in notations, we will neglect the $\lambda$ and denote $u^\lambda$ as $u$ in system \eqref{fullred}. Similar notations will also be applied for $\varphi$, $\psi$ and $h_j$ with $j=1,2,3$. The system \eqref{fullred} is  now rewritten as
\begin{equation}\label{hsb}
\begin{cases}
u_{tt}-\a^2 u_{xx}+\a \beta u_{xxxx}-u_{xxxxxx} +(u^2)_{xx}=0,\quad x>0\mbox{, }t>0,\\
u(x,0)=\varphi(x), u_t(x,0)=\psi''(x), \\
u(0,t)=h_1(t), u_{xx}(0,t)=h_2(t), u_{xxxx}(0,t)=h_3(t).
\end{cases}
\end{equation}
Then similar to the settings in Definition \ref{defi1}, the related Bourgain spaces with an additional parameter $\alpha$ are defined as below.
\begin{defi}( $X^{s,b,\si}_\alpha$ space and its norm)\label{Def, FR space}
	
	\begin{itemize}
		
		\item[(i)] For $s,b\in\R$, $X_\a^{s,b}$ and $\Lambda_\si$ denotes the completion of the Schwartz class ${\mathcal S}(\R^2)$ with
		\[\|w\|_{X_\a^{s,b}(\R^2)}=\left\|\langle\xi\rangle^s\langle|\tau|-\phi_{\a}
		(\xi)\rangle^b\widehat{w}(\xi,\tau)\right\|_{L^2_{\xi,\tau}(\R^2)},\]
		and
		\[\|w\|_{\Lambda_{\si}(\R^2)}=\left\|\chi_{|\xi|\leq 1}\langle\tau\rangle^\si \widehat{w}
		(\xi,\tau)\right\|_{L^2_{\xi,\tau}(\R^2)},\]
		where $\phi_{\a}(\xi)=\sqrt{\xi^6+\a \b \xi^4+\a^2\xi^2}$, with $0<\a\leq 1$. 
		\item[(ii)]  Let $X^{s,b,\si}_\alpha$ to be the set of all function $w$ satisfying,
		\[\|w\|_{X_\a^{s,b,\si}(\R^2)}:=\|w\|_{X_\a^{s,b}(\R^2)}+\|w\|_{\Lambda_{\si}(\R^2)}< \infty.\]
	
	\end{itemize}

\end{defi}

Again, for convenience of notations, we will drop the parameter $ \a $ in function $ \phi_{\a} $ which was defined above in part (i) of Definition \ref{Def, FR space}, that is, we will simply write $\phi_{\a}(\xi)$ as $ \phi(\xi) $ from now on.

For a better cancellation in  later study with Bourgain spaces, we introduce the following lemma.
\begin{lem}\label{equ}
	There exists a constant $c>0$ such that
	$$
	\frac{1}{c}\leq \sup_{x,y\geq 0} \frac{1+|x-\sqrt{y^3}-\frac{\a \beta}{2}\sqrt{y}|}{1+|x-\sqrt{y^3+\a \beta y^2+ \a^2 y}|}\leq c,\ \ \ \beta=\pm1.
	$$
\end{lem}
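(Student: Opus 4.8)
The plan is to reduce the two-variable supremum to an elementary one-variable estimate by factoring out the dominant $x^3$ behavior. Write $f(y) = \sqrt{y^3}$ and $g(y) = \sqrt{y^3 + \a\b y^2 + \a^2 y}$, and set $h(y) = f(y) + \frac{\a\b}{2}\sqrt{y}$, so that the ratio in question is $\dfrac{1+|x-h(y)|}{1+|x-g(y)|}$. A standard fact about such ratios is that
\[
\frac{1+|x-A|}{1+|x-B|}\ \le\ 1+|A-B|
\qquad\text{for all } x,A,B\in\R,
\]
(and symmetrically with $A,B$ swapped), by the triangle inequality $|x-A|\le |x-B|+|A-B|$. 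Hence it suffices to prove the single uniform bound
\[
\sup_{y\ge 0}\ \bigl|\,h(y)-g(y)\,\bigr|\ =\ \sup_{y\ge 0}\ \Bigl|\ \sqrt{y^3} + \tfrac{\a\b}{2}\sqrt{y} - \sqrt{y^3+\a\b y^2+\a^2 y}\ \Bigr|\ <\ \infty,
\]
and then $c := 1 + \sup_{y\ge0}|h(y)-g(y)|$ works for both inequalities (the lower bound being the reciprocal of the upper bound with the roles of $h$ and $g$ exchanged).

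For the one-variable bound I would split into small and large $y$. Near $y=0$ each of the three terms tends to $0$ (the exponents are $3/2$, $1/2$, and — after factoring — at least $1/2$), so the difference is bounded on any interval $[0,1]$, say, by continuity. For $y\ge 1$, factor $y^{3/2}$ out: writing $g(y) = y^{3/2}\sqrt{1 + \a\b y^{-1} + \a^2 y^{-2}}$, use the Taylor expansion $\sqrt{1+w} = 1 + \tfrac{w}{2} + O(w^2)$ valid for $|w|\le \tfrac12$ (which holds once $y$ is large enough, using $0<\a\le 1$), to get
\[
g(y) = y^{3/2}\Bigl(1 + \tfrac{\a\b}{2} y^{-1} + O(y^{-2})\Bigr) = y^{3/2} + \tfrac{\a\b}{2} y^{1/2} + O(y^{-1/2}) = h(y) + O(y^{-1/2}).
\]
Thus $|h(y)-g(y)| = O(y^{-1/2})\to 0$ as $y\to\infty$, so it is bounded on $[1,\infty)$. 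Combining the two ranges gives the finite supremum, with a bound that is uniform in $\b=\pm1$ and in $\a\in(0,1]$ (indeed one can track the constants to see they depend only on these ranges).

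The only mildly delicate point is making the "$O$" estimates honest: one must check that the remainder in $\sqrt{1+w}=1+w/2+R(w)$ satisfies $|R(w)|\le C|w|^2$ uniformly for $|w|\le 1/2$, and that the threshold $y_0$ beyond which $|\a\b y^{-1}+\a^2 y^{-2}|\le 1/2$ can be chosen independent of $\a\in(0,1]$ and $\b=\pm1$ — both of which are immediate since $|\a\b y^{-1}+\a^2 y^{-2}|\le y^{-1}+y^{-2}\le 2/y$, so $y_0=4$ suffices. I do not anticipate a genuine obstacle here; the lemma is purely a convenient reformulation of the phase function $\phi_\a(\xi)=\sqrt{\xi^6+\a\b\xi^4+\a^2\xi^2}$ so that, in later Bourgain-space computations, one may replace the weight $\langle|\tau|-\phi_\a(\xi)\rangle$ by the algebraically simpler $\langle|\tau|-|\xi|^3-\tfrac{\a\b}{2}|\xi|\rangle$ at the cost of harmless constants.
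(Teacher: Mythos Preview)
Your argument is correct. The key reduction via the inequality $\dfrac{1+|x-A|}{1+|x-B|}\le 1+|A-B|$ is valid (it follows from $1+|x-A|\le 1+|x-B|+|A-B|\le(1+|x-B|)(1+|A-B|)$), and your estimate $\sup_{y\ge 0}|h(y)-g(y)|<\infty$ is established cleanly: boundedness on a compact interval near the origin is immediate since both $h$ and $g$ are continuous and bounded there uniformly in $\a\in(0,1]$, $\b=\pm1$; and for large $y$ your Taylor expansion yields $g(y)=h(y)+\tfrac{\a^2}{2}y^{-1/2}+y^{3/2}R(w)$ with $|y^{3/2}R(w)|\lesssim y^{3/2}(2/y)^2\lesssim y^{-1/2}$, again uniformly in the parameters. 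The symmetric bound gives the lower inequality.

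As for comparison: the paper does not supply a proof of this lemma at all --- it is stated and then immediately used to justify the equivalence $\|w\|_{X_\a^{s,b}}\sim\|\langle\xi\rangle^s\langle|\tau|-|\xi|^3-\tfrac{\a\b}{2}|\xi|\rangle^b\widehat w\|_{L^2}$. Your write-up therefore fills a gap the authors left as routine. One small remark: the lemma as printed places the $\sup$ outside the two-sided inequality, which makes the lower bound trivial; what is actually needed (and what you prove) is the pointwise two-sided bound for all $x,y\ge 0$, so you have correctly interpreted the intended statement.
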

\noindent This  can lead to an equivalence between the norm of the Bourgain-type space for the sixth order Bousssinesq equation in \eqref{hsb} and the one for the KdV-type equation (See. \cite{54,53,52,50}), that is,
\[\|w\|_{X_\a^{s,b}(\R^2)}\sim \left\|\langle \xi\rangle^s\langle |\tau|-|\xi|^3-\frac{\a \beta}{2} |\xi|\rangle^b\widehat{w}(\xi,\tau)\right\|_{L^2_{\xi,\tau}(\R^2)}.\]

Next, the inequalities in Lemma \ref{lem1}--Lemma \ref{lem3} will be presented. These inequalities will be useful in the later study on the bilinear estimates and their proofs can be found in e.g. \cite{96-2, xin}.
\begin{lem}\label{lem1}
Let $\rho_{1}>1$ and $0\leq\rho_{2}\leq\rho_{1}$. Then there exists  a constant $C=C(\rho_{1},\rho_{2})$ such that for any $c_1,c_2\in\m{R}$,
\be\label{int in tau}
\int_{-\infty}^{\infty}\frac{dx}{\la x-c_1 \ra^{\rho_{1}} \la x-c_2 \ra^{\rho_{2}}}\leq \frac{C}{\la c_1-c_2\ra^{\rho_{2}}}.\ee
\end{lem}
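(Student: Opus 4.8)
The final statement is Lemma \ref{lem1}, the standard ``calculus lemma'' on integrals of products of two Japanese brackets. Here is how I would attack it.

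\textbf{Reduction to a normalized form.} First I would translate the variable, setting $x' = x - c_2$, so the integral becomes $\int_{\R} \la x' - (c_1-c_2)\ra^{-\rho_1}\la x'\ra^{-\rho_2}\,dx'$. Writing $a = c_1 - c_2$, the goal is reduced to proving
\[
\int_{-\infty}^{\infty}\frac{dx}{\la x-a\ra^{\rho_1}\la x\ra^{\rho_2}}\leq \frac{C}{\la a\ra^{\rho_2}}
\]
with $C$ depending only on $\rho_1,\rho_2$. This is a single-parameter inequality, and by symmetry $x\mapsto -x$, $a\mapsto -a$ it suffices to treat $a\geq 0$.

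\textbf{Region decomposition.} The plan is to split $\R$ into the two regions $A = \{|x|\leq |a|/2\}$ and $B = \{|x|> |a|/2\}$ (when $|a|\leq 1$ one can instead just bound everything crudely by $\int \la x-a\ra^{-\rho_1}\,dx \leq C$, which is $\lesssim \la a\ra^{-\rho_2}$ since $\la a\ra \sim 1$; so assume $|a|\geq 1$). On $A$ one has $|x-a|\geq |a|/2$, hence $\la x-a\ra \gtrsim \la a\ra$, so
\[
\int_A \frac{dx}{\la x-a\ra^{\rho_1}\la x\ra^{\rho_2}}\lesssim \la a\ra^{-\rho_1}\int_{\R}\frac{dx}{\la x\ra^{\rho_2}}.
\]
If $\rho_2>1$ the remaining integral is a finite constant and we win (even with room to spare, since $\rho_1\geq\rho_2>1$); if $0\leq\rho_2\leq 1$ the integral over $A$ contributes at most $\la a\ra^{-\rho_1}\cdot\la a\ra^{1-\rho_2}\lesssim \la a\ra^{-\rho_2}$ using $\rho_1\geq 1$, which is again the desired bound. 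On $B$, we have $\la x\ra \gtrsim \la a\ra$, so
\[
\int_B\frac{dx}{\la x-a\ra^{\rho_1}\la x\ra^{\rho_2}}\lesssim \la a\ra^{-\rho_2}\int_{\R}\frac{dx}{\la x-a\ra^{\rho_1}}\lesssim \la a\ra^{-\rho_2},
\]
since $\rho_1>1$ makes $\int_\R \la y\ra^{-\rho_1}\,dy$ a finite constant. Adding the two contributions gives the claim.

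\textbf{Main obstacle.} There is no serious obstacle here — the only thing to be careful about is the case analysis on whether $\rho_2$ exceeds $1$, because the elementary integral $\int_\R \la x\ra^{-\rho_2}\,dx$ only converges for $\rho_2>1$, and the borderline/subcritical range $0\leq\rho_2\leq 1$ is exactly where one must not simply pull $\la x\ra^{-\rho_2}$ out and integrate; instead one uses the truncation of region $A$ to $|x|\lesssim |a|$ to produce the factor $\la a\ra^{1-\rho_2}$ that is then absorbed by $\la a\ra^{-\rho_1}$. Keeping the constants uniform in $a$ (and handling $|a|\lesssim 1$ separately) is the only bookkeeping required. I would present the $|a|\geq 1$ case with the $A/B$ split and dispatch $|a|\leq 1$ in one line.
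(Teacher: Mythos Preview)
Your argument is correct and is precisely the standard proof of this calculus inequality. The paper itself does not give a proof of Lemma \ref{lem1}; it merely cites \cite{96-2, xin} for the details, so there is no in-paper argument to compare against beyond noting that your translation-plus-$A/B$-decomposition is exactly the classical route one finds in those references.

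One small imprecision worth tightening: at the endpoint $\rho_2=1$ the integral $\int_{|x|\le |a|/2}\la x\ra^{-1}\,dx$ is of order $\log\la a\ra$, not $\la a\ra^{0}$, so your intermediate claim $\int_A \la x\ra^{-\rho_2}\,dx\ls \la a\ra^{1-\rho_2}$ fails there by a logarithm. This does not damage the conclusion, because the hypothesis is $\rho_1>1$ strictly (you wrote ``using $\rho_1\ge 1$''), and the extra factor $\log\la a\ra$ is absorbed by $\la a\ra^{-\rho_1}$ when compared against the target $\la a\ra^{-\rho_2}=\la a\ra^{-1}$. Either note the log explicitly at $\rho_2=1$, or simply restrict the $A$-estimate to $0\le\rho_2<1$ and observe that the case $\rho_2>1$ you already handled covers a neighborhood of $1$ by the same argument (replace $\rho_2$ by any $\rho_2'>1$ with $\rho_2'\le\rho_1$ and then use $\la x\ra^{-\rho_2}\le\la x\ra^{-\rho_2'}\la a\ra^{\rho_2'-\rho_2}$ on $A$).
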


\begin{lem}\label{lem2}
If $\rho>\frac{1}{2}$, then there exists $C=C(\rho)$ such that for any $c_{i}\in\m{R},\,0\leq i\leq 2$, with $c_{2}\neq 0$,
\be\label{bdd int for quad}
\int_{-\infty}^{\infty}\frac{dx}{\la c_{2}x^{2}+c_{1}x+c_{0}\ra^{\rho}}\leq \frac{C}{|c_{2}|^{1/2}}.\ee
Similarly, if $\rho>\frac{1}{3}$, then there exists $C=C(\rho)$ such that for any $c_{i}\in\m{R},\,0\leq i\leq 3$, with $c_{3}\neq 0$,
\be\label{bdd int for cubic}
\int_{-\infty}^{\infty}\frac{dx}{\la c_{3}x^{3}+c_{2}x^{2}+c_{1}x+c_{0}\ra^{\rho}}\leq \frac{C}{|c_{3}|^{1/3}}.\ee
\end{lem}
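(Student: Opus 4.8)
The plan is to prove both inequalities at once by a scaling normalization followed by a sublevel‑set estimate for polynomials. Write $n=2$ for the first inequality and $n=3$ for the second, and set $P(x)=c_{n}x^{n}+\dots+c_{0}$ with $c_{n}\neq 0$. First I would change variables $y=|c_{n}|^{1/n}x$: this produces a factor $|c_{n}|^{-1/n}$ and turns $P$ into a real polynomial $\widetilde P(y)=\operatorname{sgn}(c_{n})\,y^{n}+(\text{lower order in }y)$ whose leading coefficient has modulus $1$. Thus it suffices to prove
\[
\sup\Big\{\int_{\R}\frac{dy}{\langle\widetilde P(y)\rangle^{\rho}}\ :\ \widetilde P\ \text{real polynomial, }\deg\widetilde P=n,\ |\text{leading coeff}|=1\Big\}<\infty
\]
whenever $\rho>1/n$, with the supremum depending only on $n$ and $\rho$; undoing the scaling then gives the bound $C_{n,\rho}|c_{n}|^{-1/n}$, which is exactly the claimed form (note $\rho>1/2$ for $n=2$ and $\rho>1/3$ for $n=3$).

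The heart of the matter is the uniform sublevel‑set bound: there is $C_{n}$, \emph{independent of the lower‑order coefficients of} $\widetilde P$, with $\big|\{y\in\R:\ |\widetilde P(y)|\le\delta\}\big|\le C_{n}\,\delta^{1/n}$ for all $\delta>0$. I would prove this by induction on $n$ (the standard van der Corput type argument). The base case $n=1$ is trivial since the sublevel set is an interval of length $2\delta$. For the step, I use that $\widetilde P'$ has degree $n-1$ with leading coefficient of modulus $n$, so $\widetilde P'/n$ is covered by the induction hypothesis; I then split $\{|\widetilde P|\le\delta\}$ into the part where $|\widetilde P'|\le\mu$, whose measure is $\lesssim(\mu/n)^{1/(n-1)}$ by the hypothesis, and the part where $|\widetilde P'|>\mu$, on which $\widetilde P$ is monotone with large derivative so that each of the $O(n)$ constituent subintervals has length $\lesssim\delta/\mu$ (using that $\{P^{2}\le\delta^{2}\}$ and $\{(P')^{2}\le\mu^{2}\}$ each consist of at most $n$ intervals). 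Optimizing in $\mu\sim\delta^{(n-1)/n}$ balances the two contributions at $\delta^{1/n}$.

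Granting this, I would finish with the layer‑cake formula. Since $\langle\widetilde P(y)\rangle^{-\rho}\in(0,1]$, the set $\{\langle\widetilde P\rangle^{-\rho}>\lambda\}$ is empty for $\lambda\ge1$ and equals $\{\,|\widetilde P(y)|<(\lambda^{-2/\rho}-1)^{1/2}\,\}$ for $0<\lambda<1$, hence
\[
\int_{\R}\frac{dy}{\langle\widetilde P(y)\rangle^{\rho}}
=\int_{0}^{1}\big|\{|\widetilde P|<(\lambda^{-2/\rho}-1)^{1/2}\}\big|\,d\lambda
\le C_{n}\int_{0}^{1}\lambda^{-1/(n\rho)}\,d\lambda,
\]
and the last integral is finite precisely when $1/(n\rho)<1$, i.e. $\rho>1/n$. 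This gives the required uniform bound and, after undoing the scaling, the two stated inequalities.

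I expect the only real obstacle to be the sublevel‑set estimate, and specifically the uniformity of $C_{n}$ over all choices of the lower‑order coefficients: this is exactly where one exploits that the $n$‑th derivative of a polynomial with unit leading coefficient is the fixed constant $\pm n!$ regardless of the rest, together with the bound on the number of intervals forming the sublevel sets of $\widetilde P$ and $\widetilde P'$. The scaling reduction and the layer‑cake computation are routine once that estimate is in hand.
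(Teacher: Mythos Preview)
Your argument is correct. The scaling reduction is clean, the van der Corput sublevel-set induction is carried out properly (the interval count is right: $\{|\widetilde P'|>\mu\}$ has at most $n$ components since $(\widetilde P')^{2}-\mu^{2}$ has degree $2(n-1)$ and is positive at $\pm\infty$), and the layer-cake step with the bound $(\lambda^{-2/\rho}-1)^{1/(2n)}\le\lambda^{-1/(n\rho)}$ gives the convergence condition $\rho>1/n$ exactly as needed.

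As for comparison: the paper does not actually prove this lemma. It states Lemmas~2.4--2.6 together and refers the reader to \cite{96-2, xin} for the proofs. So there is no ``paper's own proof'' to compare against here; your self-contained argument via scaling plus the sublevel-set estimate is one of the standard routes and would serve perfectly well in place of the citation.
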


\begin{lem}\label{lem3}
Let $\rho>1$. Then there exists  a constant $C=C(\rho)$ such that  for any $c_i\in\m{R} $, $0\leq i\leq 2$, with $ c _2\ne 0$,
\be\label{int for quad}
\int_{-\infty}^{\infty}\frac{dx}{\la c_{2}x^{2}+c_{1}x+c_{0}\ra^{\rho}}\leq C\,|c_{2}|^{-\frac{1}{2}}\Big\la c_{0}-\frac{c_{1}^{2}}{4c_{2}}\Big\ra^{-\frac{1}{2}}.\ee
\end{lem}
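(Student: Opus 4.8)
The plan is to reduce \eqref{int for quad} to a single one-dimensional inequality by completing the square, and then to prove that inequality by a short case analysis in the vertex value $m:=c_0-\frac{c_1^2}{4c_2}$.

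Write $c_2x^2+c_1x+c_0=c_2\big(x+\frac{c_1}{2c_2}\big)^2+m$. Translating $x\mapsto x-\frac{c_1}{2c_2}$ turns the left side of \eqref{int for quad} into $\i_\R\la c_2x^2+m\ra^{-\rho}\,dx$. Since $\la\cdot\ra$ is even we may assume $c_2>0$ (otherwise replace $(c_2,m)$ by $(|c_2|,-m)$), and the rescaling $x\mapsto|c_2|^{-1/2}x$ extracts the factor $|c_2|^{-1/2}$. As $\la m\ra=\la-m\ra$, it therefore suffices to prove
\be\label{pp-onedim}
I(m):=\i_\R\la x^2+m\ra^{-\rho}\,dx\le C(\rho)\,\la m\ra^{-1/2}\qquad(m\in\R).
\ee

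For \eqref{pp-onedim} I would split into three regimes. If $|m|\le1$, then $\la x^2+m\ra\gs\la x\ra^2$ for $|x|\ge\sqrt2$, so $I(m)\ls\i_{|x|\le\sqrt2}dx+\i_{|x|>\sqrt2}\la x\ra^{-2\rho}\,dx\ls1$ (here $\rho>\tfrac12$ already suffices), while $\la m\ra^{-1/2}\gs1$. If $m\ge1$, then $x^2+m\ge1$, so $I(m)\le\i_\R(x^2+m)^{-\rho}\,dx=m^{1/2-\rho}\i_\R(y^2+1)^{-\rho}\,dy\ls m^{-1/2}\sim\la m\ra^{-1/2}$, using $\rho>1$.

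The remaining, genuinely resonant case is $m=-M$ with $M\ge1$, where the integrand is $O(1)$ near $x^2\approx M$. Split $\R=\{x^2\le M/2\}\cup\{M/2\le x^2\le2M\}\cup\{x^2\ge2M\}$. On the first piece $|x^2-M|\ge M/2$, giving a contribution $\ls M^{1/2-\rho}$; on the third, $x^2-M\ge x^2/2$, so the contribution is $\ls\i_{|x|\ge\sqrt{2M}}\la x\ra^{-2\rho}\,dx\ls M^{1/2-\rho}$. On the middle piece, treating $x>0$ (the part $x<0$ is symmetric), one has $x+\sqrt M\sim\sqrt M$, hence $|x^2-M|=|x-\sqrt M|\,|x+\sqrt M|\gs\sqrt M\,|x-\sqrt M|$, so $\la x^2-M\ra\gs\la\sqrt M\,(x-\sqrt M)\ra$; the substitution $w=\sqrt M\,(x-\sqrt M)$ then bounds this contribution by $\frac{C}{\sqrt M}\i_\R\la w\ra^{-\rho}\,dw\ls M^{-1/2}$. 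Summing the three pieces and using $\la M\ra\sim M$ for $M\ge1$ proves \eqref{pp-onedim}, and with the reduction above, the lemma. The one delicate point is this middle region: there is no decay in the integrand there and one only recovers $M^{-1/2}$ from the width of the resonance after linearizing $x^2-M$ at $x=\pm\sqrt M$; this is precisely where the hypothesis $\rho>1$—rather than the $\rho>\tfrac12$ that is enough for \eqref{bdd int for quad}—is indispensable, since it is exactly what makes $\la w\ra^{-\rho}$ integrable over all of $\R$.
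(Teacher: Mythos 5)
Your proof is correct, and the paper itself offers no argument to compare against: Lemma \ref{lem3} is stated with its proof deferred to the cited references \cite{96-2,xin}. Your route --- complete the square, rescale out $|c_2|^{-1/2}$, and then treat the non-resonant regimes ($|m|\le 1$, $m\ge 1$) by crude bounds while linearizing $x^2-M$ at $x=\pm\sqrt M$ in the resonant annulus $x^2\sim M$ --- is the standard one and is exactly where the hypothesis $\rho>1$ (as opposed to $\rho>\tfrac12$ in \eqref{bdd int for quad}) enters, as you correctly point out.
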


Finally, some important notations and some of the previous results on the IVP  of a sixth order Boussinesq equation will be provided below. Denote  $u=[W_\R(f_1,f_2)](x,t)$ as the solution to the linear problem,
\begin{equation}\label{linear1}
\begin{cases}
u_{tt}-\a^2 u_{xx}+\a \beta u_{xxxx}-u_{xxxxxx}=0, \ \ \ x\in\R, t>0,\\
u(x,0)=f_1(x),u_t(x,0)=f_2''(x).
\end{cases}
\end{equation}
and write
$
    [W_{R}(f_1,f_2)](x,t):=[V_1(f_1)](x,t)+[V_2(f_2)](x,t)
$
with
\begin{equation}\label{v01}
   [ V_1(f_1)](x,t):=\frac{1}{2}\int_{\R} \left(e^{i(t\phi(\xi)+x\xi)}+e^{{i(-t\phi(\xi)+x\xi)}}\right)\widehat{f}_1(\xi)d\xi,
\end{equation}
and
\begin{equation}\label{v02}
   [ V_2(f_2)](x,t):=\frac{1}{2i}\int_{\R } \left(e^{i(t\phi(\xi)+x\xi)}-e^{i(-t\phi(\xi)+x\xi)}\right)\frac{\xi^2\widehat{f}_2(\xi)}{\phi(\xi)}d\xi.
\end{equation}
All through this article, we denote $\eta(t)$ to be a cut-off function such that $\eta\in C^\infty_0(\R)$ with $\eta(t)=1$ on $(-1,1)$ and supp $\eta\in (-2,2)$.  The estimates below  come from \cite{34,35,96}.
\begin{lem}\label{R}
For any $s,b\in\m{R}$, there exists some constant $ C=C(s,b) $ such that for any $f_1 \in H^s(\R)$ and $f_2\in H^{s-1}(\R)$, the solution $u$ of the IVP (\ref{linear1}) satisfies
\begin{align*}
\sup_{t\geq 0} \|\eta(t)u(\cdot,t)\|_{H^s(\R)}&\leq C \|f_1\|_{H^s(\R)}+\|f_2\|_{H^{s-1}(\R)},\\
 \sup_{x\geq 0} \|\eta(t) \partial ^j_x u(x,\cdot)\|_{H^{\frac{s-j+1}{3}}(\mathbb{R})}&\leq C\|f_1 \|_{H^s(\mathbb{R})}+\|f_2 \|_{H^{s-1}(\mathbb{R})},\\
   \|\eta(t)u(x,t)\|_{X_\a^{s,b}(\R^2)}&\leq C \|f_1\|_{H^s(\R)}+\|f_2\|_{H^{s-1}(\R)},
\end{align*}
for $j=0,1,2,..,5$.
\end{lem}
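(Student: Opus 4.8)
\textbf{Proof plan for Lemma \ref{R}.}

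The plan is to reduce all three estimates to known oscillatory-integral and $X^{s,b}$-estimates for the ``Airy-type'' semigroup, exploiting the fact that, by Lemma \ref{equ} and the discussion following it, the phase $\phi(\xi)=\sqrt{\xi^6+\a\b\xi^4+\a^2\xi^2}$ is comparable (after the usual manipulation of the weights $\langle|\tau|-\phi(\xi)\rangle$) to the KdV-type phase $|\xi|^3+\tfrac{\a\b}{2}|\xi|$. First I would write $u=W_\R(f_1,f_2)=V_1(f_1)+V_2(f_2)$ as in \eqref{v01}--\eqref{v02} and treat each term separately. For $V_1$, the two summands $e^{\pm it\phi(\xi)+ix\xi}\widehat f_1(\xi)$ are unitary groups in $L^2_x$ for each fixed $t$, which immediately gives $\|\eta(t)V_1(f_1)(\cdot,t)\|_{H^s(\R)}\lesssim \|f_1\|_{H^s(\R)}$; for $V_2$ the extra factor $\xi^2/\phi(\xi)$ is $O(\langle\xi\rangle^{-1})$ uniformly in $\a\le1$ (since $\phi(\xi)\gtrsim\langle\xi\rangle|\xi|^2$ for $|\xi|\gtrsim1$ and $\phi(\xi)\sim\a|\xi|$ for $|\xi|$ small, but the singularity at $\xi=0$ is compensated by the $\xi^2$ in the numerator), so the multiplier maps $H^{s-1}$ to $H^s$, giving the first displayed bound.

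For the second (spatial-trace / sharp-Kato-smoothing-type) estimate, the idea is that $\partial_x^j u(x,\cdot)$ is, up to the harmless multipliers above, of the form $\int e^{\pm it\phi(\xi)}e^{ix\xi}(i\xi)^j\,\widehat g(\xi)\,d\xi$ evaluated as a function of $t$ for each fixed $x$. Changing variables $\mu=\phi(\xi)$ on the regions where $\phi$ is monotone, one has $d\mu=\phi'(\xi)\,d\xi$ with $\phi'(\xi)\sim\langle\xi\rangle|\xi|$ for the relevant range, so the $H^{(s-j+1)/3}_t$ norm of the trace is controlled by $\big\|\langle\phi(\xi)\rangle^{(s-j+1)/3}\,|\xi|^j\,|\phi'(\xi)|^{-1/2}\,\widehat g(\xi)\big\|_{L^2_\xi}$; since $\phi(\xi)\sim|\xi|^3$ for large frequencies this exponent matches $\langle\xi\rangle^{s}$ (resp.\ $\langle\xi\rangle^{s-1}$ for the $V_2$ piece, where the factor $\xi^2/\phi(\xi)$ supplies the extra $\langle\xi\rangle^{-1}$), and near $\xi=0$ the cutoff $\eta$ and the boundedness of everything in sight make the low-frequency contribution trivial. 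The third estimate is the standard ``free solution lies in $X^{s,b}$'' fact: writing the time-localized solution via $\eta(t)e^{\pm it\phi(\xi)}$ and taking the space-time Fourier transform produces $\widehat\eta(\tau\mp\phi(\xi))\widehat g(\xi)$, whose $X^{s,b}_\a$ norm is $\big\|\langle\xi\rangle^{s}\langle|\tau|-\phi(\xi)\rangle^{b}\widehat\eta(\tau\mp\phi(\xi))\widehat g(\xi)\big\|_{L^2}$, and since $\langle|\tau|-\phi(\xi)\rangle\lesssim\langle\tau\mp\phi(\xi)\rangle$ on the support considerations and $\widehat\eta$ is Schwartz, this is $\lesssim\|g\|_{H^s(\R)}$ with $g=f_1$ or the $H^{s-1}$-datum $f_2$ carrying the extra derivative loss.

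The main obstacle, and the point deserving genuine care rather than citation, is \emph{uniformity in the parameter $\a\in(0,1]$}: the substitution $\mu=\phi(\xi)$ and the lower bounds $\phi(\xi)\gtrsim\cdots$, $\phi'(\xi)\gtrsim\cdots$ must be shown with constants independent of $\a$, and near $\xi=0$ the phase degenerates ($\phi(\xi)\approx\a|\xi|$, $\phi'(\xi)\approx\a$) so the change of variables is singular there; one handles this by splitting into $|\xi|\le1$ and $|\xi|\ge1$, using Lemma \ref{equ} to replace $\phi$ by $|\xi|^3+\tfrac{\a\b}{2}|\xi|$ in the Bourgain weights, and on the low-frequency piece observing that the $\a$-dependence is absorbed because we only need $H^{s-1}\to H^s$-type bounds where the $\xi^2$ numerator kills the problematic factor — exactly the mechanism already exploited in \cite{34,35,96}, so in the write-up I would invoke those references for the low-frequency bookkeeping and give the high-frequency oscillatory estimate in detail. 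Since the displayed inequalities are asserted in the excerpt to ``come from \cite{34,35,96},'' the honest proof is essentially a reduction: restate the KdV-type estimates from those papers, verify via Lemma \ref{equ} and the explicit form of $\phi$ that the hypotheses transfer with $\a$-uniform constants, and collect the pieces.
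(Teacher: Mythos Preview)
The paper offers no proof for this lemma; it is simply stated with the remark that the estimates ``come from \cite{34,35,96}.'' Your sketch is the standard argument and matches what those references do, and your closing paragraph correctly identifies that the honest write-up is a transfer of the cited KdV/Boussinesq-type estimates via Lemma~\ref{equ} together with an $\alpha$-uniform low-frequency check.

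One point in your sketch is not quite right and is worth fixing, precisely because you flag $\alpha$-uniformity as the delicate issue. The multiplier $\xi^{2}/\phi(\xi)$ is \emph{not} $O(\langle\xi\rangle^{-1})$ uniformly in $\alpha\in(0,1]$: at $|\xi|=\sqrt{\alpha}$ one has $\phi(\xi)\sim\alpha^{3/2}$ and hence $\xi^{2}/\phi(\xi)\sim\alpha^{-1/2}$, so ``treating each summand $e^{\pm it\phi(\xi)}$ separately'' fails for $V_{2}$ at low frequencies. The mechanism that actually rescues the estimate is the cancellation in their difference: $V_{2}$ carries the factor $\sin(t\phi(\xi))\,\xi^{2}/\phi(\xi)$, and since $|\sin(t\phi(\xi))/\phi(\xi)|\le|t|$ while $\eta$ localizes to $|t|\le2$, the low-frequency multiplier is $O(\xi^{2})$ uniformly in $\alpha$, which suffices for the first inequality. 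For the $X^{s,b}_{\alpha}$ bound the analogous device is the mean-value estimate
\[
\big|\widehat{\eta}(\tau-\phi(\xi))-\widehat{\eta}(\tau+\phi(\xi))\big|\lesssim \phi(\xi)\,\langle\tau\rangle^{-N}\quad\text{for }|\xi|\le1,
\]
which again cancels the $1/\phi(\xi)$ and leaves a uniformly bounded low-frequency contribution. With this correction in place for $|\xi|\le1$, the rest of your plan (unitarity for $V_{1}$, the change of variables $\mu=\phi(\xi)$ on $|\xi|\ge1$ for the trace estimate, and the Schwartz decay of $\widehat{\eta}$ for the $X^{s,b}_{\alpha}$ bound) goes through exactly as you wrote.
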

According to the Duhamel's principal, the  IVP for the forced linear equation,
 \begin{equation*}
\begin{cases}
u_{tt}-\a^2u_{xx}+\a \beta u_{xxxx}-u_{xxxxxx}=f_{xx}(x,t),\quad x\in \mathbb{R}\mbox{, }t>0,\\
u(x,0)=0, u_t(x,0)=0,
\end{cases}
\end{equation*}
has its solution $u$ in the form
\[u(x,t)=\int^t_0 [W_\R(0,f)](x,t-t')dt'.\]
The following conclusion of the above function $u$ is well-known, see e.g. Lemma 2.2 in \cite{34}, Lemma 2.2 in \cite{41} and Lemma 2.1 in \cite{GTV97}. 
\begin{lem}\label{for}
Let $-\frac12< b'\leq 0 \leq b\leq b'+1$ and $0<T\leq 1$.  Then for any $s\in\m{R}$ and $ 0<\a\leq 1 $, there exists a constant $C=C(b,b',s)$ such that
\begin{align*}
 \left\|\eta\Big(\frac{t}{T}\Big)\int^t_0 [W_\R(0,f)](x,t-t')dt'\right\|_{X_{\a}^{s,b}(\R^2)}\leq C\,T^{1+b'-b} \left\|\left(\frac{\xi^2\widehat{f}(\xi,\tau)}{\phi(\xi)}\right)^\vee\right\|_{X_{\a}^{s,b'}(\R^2)},
\end{align*}
where    `` $\vee$" denote the   inverse Fourier transform in both time and space.
\end{lem}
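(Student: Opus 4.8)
The plan is to reduce this wave-type Duhamel estimate to the classical single-phase Duhamel estimate in Bourgain spaces by splitting the evolution into its two half-wave components. First I would set $g:=\big(\xi^{2}\widehat f(\xi,\tau)/\phi(\xi)\big)^{\vee}$, so that the right-hand side of the asserted inequality is $C\,T^{1+b'-b}\|g\|_{X_\a^{s,b'}(\R^2)}$. Recalling that $W_\R(0,\cdot)=V_2(\cdot)$ is given by \eqref{v02}, a direct computation (Duhamel's formula together with $\sin s=\tfrac1{2i}(e^{is}-e^{-is})$) shows that
\[
u(x,t)=\frac1{2i}\big(\mathcal D_+g-\mathcal D_-g\big)(x,t),\qquad
[\mathcal D_\pm g](x,t):=\int_0^t\big(e^{\pm i(t-t')\phi(D)}g(\cdot,t')\big)(x)\,dt',
\]
where $\phi(D)=\phi_\a(D)$ is the Fourier multiplier with the real, nonnegative symbol $\phi(\xi)$, and the spatial multiplier $\xi^{2}/\phi(\xi)$ that appears in $V_2$ has been absorbed into $g$. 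Thus each $\mathcal D_\pm g$ is exactly the Duhamel term for the first-order dispersive equation $\partial_t v\mp i\phi(D)v=g$, and it suffices to estimate $\eta(t/T)\mathcal D_\pm g$ for each sign.

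For a real symbol $P$, write $\|w\|_{Y_P^{s,b}}:=\big\|\langle\xi\rangle^{s}\langle\tau-P(\xi)\rangle^{b}\widehat w(\xi,\tau)\big\|_{L^2_{\xi,\tau}}$ for the corresponding KdV-type Bourgain norm. I would then invoke the classical Duhamel inequality
\[
\big\|\eta(t/T)\,\mathcal D_\pm g\big\|_{Y_{\pm\phi}^{s,b}}\ \lesssim\ T^{1+b'-b}\,\|g\|_{Y_{\pm\phi}^{s,b'}},
\]
valid for $-\tfrac12<b'\le0\le b\le b'+1$ and $0<T\le1$; this is the estimate underlying \cite[Lemma~2.2]{34}, \cite[Lemma~2.2]{41} and \cite[Lemma~2.1]{GTV97}, and its proof is carried out frequency by frequency in $\xi$ — the key being the split into the regions $|\tau\mp\phi(\xi)|\lesssim1$ (where one Taylor-expands the Duhamel factor) and $|\tau\mp\phi(\xi)|\gtrsim1$ — and is insensitive to the particular symbol, so it applies verbatim to the non-polynomial $\phi_\a$; alternatively one may first replace $\phi_\a(\xi)$ by $|\xi|^{3}+\tfrac{\a\b}{2}|\xi|$ at the cost of a harmless constant via Lemma~\ref{equ}. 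It then remains to trade the weight $\langle|\tau|-\phi(\xi)\rangle$ of $X_\a^{s,b}$ for the one-sided weights. Since $\phi(\xi)\ge0$, one has $|\tau|-\phi(\xi)=\tau-\phi(\xi)$ on $\{\tau\ge0\}$ and $\big||\tau|-\phi(\xi)\big|=|\tau+\phi(\xi)|$ on $\{\tau\le0\}$, and in all cases $|\tau|,\phi(\xi)\ge0$ forces $\langle|\tau|-\phi(\xi)\rangle\le\langle\tau-\phi(\xi)\rangle$ and $\langle|\tau|-\phi(\xi)\rangle\le\langle\tau+\phi(\xi)\rangle$. Hence $\|w\|_{X_\a^{s,b}}\le\|w\|_{Y_{\pm\phi}^{s,b}}$ because the exponent $b$ is $\ge0$, while $\|g\|_{Y_{\pm\phi}^{s,b'}}\le\|g\|_{X_\a^{s,b'}}$ because the exponent $b'\le0$ reverses the inequality; combining these with the triangle inequality $\|\eta(t/T)u\|_{X_\a^{s,b}}\le\tfrac12\sum_{\pm}\|\eta(t/T)\mathcal D_\pm g\|_{X_\a^{s,b}}$ and the displayed single-phase bound then closes the estimate.

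I do not anticipate a genuine analytic obstacle — the lemma is quoted as standard — so the work is essentially organizational, and there are only two points that require care. The first is making sure the classical single-phase estimate is legitimately applicable with the non-smooth symbol $\phi_\a(\xi)=\sqrt{\xi^{6}+\a\b\xi^{4}+\a^{2}\xi^{2}}$, which is why I would either appeal to the symbol-agnostic form of its proof or route through Lemma~\ref{equ}. The second is that the two elementary weight comparisons must be used in the correct directions — upward, with exponent $b\ge0$, on the solution side, and downward, with exponent $b'\le0$, on the forcing side — and this is exactly what the sign hypotheses $0\le b$ and $b'\le0$ guarantee; the remaining constraints $b'>-\tfrac12$ and $b\le b'+1$ are inherited directly from the single-phase estimate.
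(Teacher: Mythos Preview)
The paper does not supply its own proof of this lemma: it simply cites \cite[Lemma~2.2]{34}, \cite[Lemma~2.2]{41} and \cite[Lemma~2.1]{GTV97} and moves on. Your proposal is therefore not so much a comparison target as a reconstruction of the argument those references contain, and as such it is correct. The decomposition $u=\tfrac{1}{2i}(\mathcal D_+g-\mathcal D_-g)$ with $g=(\xi^2\widehat f/\phi)^\vee$ is exactly how one reduces the second-order-in-time problem to two first-order half-waves, and the single-phase Duhamel bound $\|\eta(t/T)\mathcal D_\pm g\|_{Y_{\pm\phi}^{s,b}}\lesssim T^{1+b'-b}\|g\|_{Y_{\pm\phi}^{s,b'}}$ is precisely Lemma~2.1 of \cite{GTV97}, whose proof (conjugate by $e^{\pm it\phi(D)}$ and reduce to a purely temporal estimate in $H^b_t$) is indeed insensitive to the specific real symbol. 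Your weight comparison $\langle|\tau|-\phi(\xi)\rangle\le\langle\tau\mp\phi(\xi)\rangle$ is correct and is used in the right directions thanks to $b\ge0$, $b'\le0$; this is the only place the sign hypotheses on $b,b'$ enter beyond what the single-phase lemma already requires. In short, there is no gap, and what you have written is essentially the proof one would extract from the cited sources (in particular \cite{41,GTV97}), adapted to the two-phase weight $\langle|\tau|-\phi(\xi)\rangle$.
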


\section{Linear problems}
\setcounter{equation}{0}
This section is divided into two subsections. First, some explicit representation formulas for solutions of IBVPs for the sixth order Boussinesq equations are recalled from \cite{96}. The second subsection is devoted to show linear estimates which will play important roles in our later analysis.

\subsection{Linear Representations}
In this subsection, we present some existing results on the explicit formulas for different linear IBVPs related  to \eqref{fullred}.
First, let us consider the non-homogeneous boundary value problem with zero initial conditions
\begin{equation}\label{homoin}
\begin{cases}
u_{tt}-\a^2 u_{xx}+\a \beta u_{xxxx}-u_{xxxxxx} =0,\quad x>0\mbox{, }t>0,\\
u(x,0)=0, u_t(x,0)=0, \\
u(0,t)=h_1(t), u_{xx}(0,t)=h_2(t), u_{xxxx}(0,t)=h_3(t).
\end{cases}
\end{equation}
We only study the case for $\b=1$ since the case for $\b=-1$ is similar. We denote $\gamma _j (\rho)$ for $\ j=1,2,3$, to be  the three solutions  of the characteristic equation
\begin{equation}\label{ch}
    \gamma^6-\a\gamma^4+\a^2\gamma^2-\rho^2=0,
\end{equation}
with $Re ( \gamma _j(\rho)) <0$  and distinct.  We also denote
 $c_j(\rho), \mbox{ for }j=1,2,3$,  to be the solutions to the linear system
\begin{equation}\label{system}
\begin{cases}
c_{1}+c_{2}+c_{3}=\widetilde{h}_1(\rho),\\
\gamma_{1}^2c_{1}+\gamma_{2}^2c_{2}+\gamma_{3}^2c_{3}=\widetilde{h}_2(\rho),\\
\gamma_{1}^4c_{1}+\gamma_{2}^4c_{2}+\gamma_{3}^4c_{3}=\widetilde{h}_3(\rho),
\end{cases}
\end{equation}
with
\begin{equation*}
   \widetilde{h}_j(\rho)=\int^{+\infty}_0 e^{-\rho t}h_j(t)dt, \hspace{0.5cm}j=1,2,3.
\end{equation*}
For $ j=1,2,3 $, $ c_{j}(\rho) $ can be computed based on the Cramer's rule. Let $\Delta(\rho)$ be the determinant of the coefficient matrix of \eqref{system} and $\Delta_{j}(\rho)$ be the determinants of the matrices with the $j$-th column replaced by $(\widetilde{h}_1(\rho),\widetilde{h}_2(\rho),\widetilde{h}_3(\rho))^T$ for $j=1,2,3$. For $j,m=1,2,3$,  $\Delta_{j,m}(\rho)$ is  obtained from $\Delta_j(\rho)$ by letting $\widetilde{h}_m(\rho)=1$ and $\widetilde{h}_j(\rho)=0$ for $j\neq m$.
Then the  solution of \eqref{homoin} can be written in the form
\begin{align}
u(x,t)=[W_{bdr}(h_1,h_2,h_3)](x,t)
=\sum^3_{m=1}\l(\emph{\textrm{I}}_m (x,t)+\sum^3_{l=1} W^l_{bdr,m}(h_m)(x,t)+ \overline{ I}_m (x,t)\r),\label{Wbdr}
\end{align}
where
\begin{equation}\label{W10}
 W^1_{bdr,m}(h_m)(x,t): = \sum^3_{j=1}\int^{i}_{1+i}e^{\rho t}\frac{\Delta_{j,m}(\rho)}{\Delta(\rho)}
e^{\gamma_{j}(\rho)x}\hat{h}_m(\rho)d\rho,
\end{equation}
\begin{equation}\label{W20}
 W^2_{bdr,m}(h_m)(x,t): = \sum^3_{j=1}\int^{1+i}_{1-i}e^{\rho t}\frac{\Delta_{j,m}(\rho)}{\Delta(\rho)
}e^{\gamma_{j}(\rho)x}\hat{h}_m(\rho)d\rho,
\end{equation}
\begin{equation}\label{W30}
 W^3_{bdr,m}(h_m)(x,t): = \sum^3_{j=1}\int^{1-i}_{-i}e^{\rho t}\frac{\Delta_{j,m}(\rho)}{\Delta(\rho)}
e^{\gamma_{j}(\rho)x}\hat{h}_m(\rho)d\rho,
\end{equation}
\begin{equation}\label{W40}
    I_m(x,t)=\sum^3_{j=1}\int^{+\infty}_1 e^{i\phi(\mu)t}e^{\gamma_{j}^+(\mu)x}
    \frac{\Delta_{j,m}^+(\mu)}{\Delta^+(\mu)}\frac{3\mu^4+2\a\mu^2+\a^2}{\sqrt{\mu^4+\a\mu^2+\a^2}}\widetilde{h}_m^+(\mu)d\mu,
\end{equation}
with
\[\widetilde{h}_m^+(\mu)=\int^\infty_0 e^{-i\phi(\mu )s}h_m(s)ds, \mbox{ for } \phi(\mu)=\sqrt{\mu^6+\a\mu^4+\a^2\mu^2}.\]
and
\begin{equation}\label{gamma}
  \gamma_1^+ (\mu) = -i\mu, \quad  \gamma_2^+ (\mu) = -p(\mu)-iq(\mu), \quad  \gamma_3^+ (\mu) = -p(\mu)+iq(\mu),
\end{equation}
with
\[p(\mu)=\frac{1}{\sqrt{2}}\left(\sqrt{\mu^2+\a^2+\sqrt{4\mu^4+4\a\mu^2+4\a^2}}\right),\]
\[q(\mu)=\frac{1}{\sqrt{2}}\left(\sqrt{\sqrt{4\mu^4+4\a\mu^2+4\a^2}-\mu^2-\a^2}\right).\]
\noindent For more details on the derivation of the representation, the reader may refer to \cite{96-1,96-2}.

Next, we move on to the linear problem with non-zero initial condition but with zero boundary values,
\begin{equation}\label{in2-0}
    \begin{cases}
    u_{tt}-\a^2 u_{xx}+ \a \beta u_{xxxx}-u_{xxxxxx}=0,\quad x>0 \mbox{, }t>0,\\
    u(x,0)=\varphi (x), u_t(x,0)=\psi ''(x),\\
    u(0,t)=0, u_ {xx}(0,t)=0, u_{xxxx}(0,t)=0.
    \end{cases}
\end{equation}
Let   $\varphi ^*, \ \psi ^*$   be  zero extensions of $\varphi $ and $\psi $ from $\R^+\to \R$,  respectively.
We denote \[p(x,t)=[ W_{R} (\varphi ^*, \psi ^*)] (x,t),\]   and set $\vec{p}=(p_1,p_2,p_3)$ with 
\begin{equation}\label{p0}
 p_1 (t)=p(0,t), \quad p_2 (t)= p_{xx} (0,t), \quad p_3 (t)=p_{xxxx} (0,t).
\end{equation}
Then the solution $u $ of the IBVP (\ref{in2-0}) can be written in the form
\[ u(x,t) = [W_{R} (\varphi ^*, \psi ^*)] (x,t)- [W_{bdr} (\vec{p})] (x,t) .\]

Finally, we consider the forced linear problem with zero boundary and zero initial conditions,
\begin{equation}\label{in2-1}
    \begin{cases}
    u_{tt}-\a^2 u_{xx}+ \a \beta u_{xxxx}-u_{xxxxxx}=f_{xx}(x,t),\quad x>0 \mbox{, }t>0,\\
    u(x,0)=0, u_t(x,0)=0,\\
    u(0,t)=0, u_ {xx}(0,t)=0, u_{xxxx}(0,t)=0.
    \end{cases}
\end{equation}
Let $f^* $ be an  extension of $f$ from $\R^+ $ to $\R$. Then, the corresponding solution $u$ of (\ref{in2-1}) can be written as
\[ u(x,t) = \int ^t_0 [  W_R (0,f^*)](x, t-t' ) dt' - [W_{bdr} (\vec{q})](x,t),\]
where $\vec{q}=(q_1,q_2,q_3)$ and
\begin{equation}\label{q0}
q_1 (t)=q(0,t), \quad q_2 (t)= q_{xx} (0,t), \quad q_3 (t)=q_{xxxx} (0,t)
\end{equation}
with
\[ q(x,t) = \int ^t_0 [  W_R (0,f^*)](x, t-t' ) dt' .\]

We end up this subsection by combining all the above conclusions to address the fully nonhomogeneous forced linear IBVP,
\begin{equation}\label{full}
    \begin{cases}
    u_{tt}-\a^2 u_{xx}+ \a \beta u_{xxxx}-u_{xxxxxx}=f_{xx}(x,t),\quad x>0 \mbox{, }t>0,\\
    u(x,0)=\varphi(x), u_t(x,0)=\psi''(x),\\
    u(0,t)=h_1(t), u_ {xx}(0,t)=h_2(t), u_{xxxx}(0,t)=h_3(t).
    \end{cases}
\end{equation}
Since we only consider solutions in  Sobolev spaces $H^s(\R^+)$ wiht $s<  \frac12$, then there is no need to handle the compatibility conditions between the initial and boundary conditions. Hence, we have the following proposition.
\begin{pro}\label{represent}
The solution $u(x,t)$ of \eqref{full} can be written in the form
\begin{align*}
u(x,t)= W_R(\varphi^*,\psi^*)(x,t)+\int^t_0 W_R(0,f^*)(x,t-\tau)d\tau
+W_{bdr}(\vec{h}-\vec{p}-\vec{q}),
\end{align*}
where $\vec{h}=(h_1,h_2,h_3)$, and $\vec{p}$, $\vec{q}$ are defined in \eqref{p0}, \eqref{q0} respectively.
\end{pro}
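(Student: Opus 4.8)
The proof is a direct assembly of the three sub-representations already derived in this subsection, using the linearity of the equation and the uniqueness of solutions. The plan is to decompose the solution of the fully nonhomogeneous forced problem \eqref{full} into three pieces, each solving one of the already-treated problems, and then verify that the boundary traces match up.

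First I would write $u=u^{(1)}+u^{(2)}+u^{(3)}$, where $u^{(1)}=\int^t_0 W_R(0,f^*)(x,t-\tau)d\tau$ handles the forcing with zero data, $u^{(2)}=W_R(\varphi^*,\psi^*)(x,t)$ handles the initial data $(\varphi,\psi)$ on the whole line (via the zero extensions $\varphi^*,\psi^*$), and $u^{(3)}$ is to be chosen so that the boundary conditions come out right. By construction $u^{(1)}+u^{(2)}$ already solves the PDE $u_{tt}-\a^2 u_{xx}+\a\beta u_{xxxx}-u_{xxxxxx}=f_{xx}$ on the whole line (hence on $\R^+\times\R^+$) with the correct initial conditions $u(x,0)=\varphi(x)$, $u_t(x,0)=\psi''(x)$ for $x>0$, since the extensions agree with $\varphi,\psi$ on $\R^+$. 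However, its boundary traces at $x=0$ are in general nonzero: by \eqref{p0} the trace of $u^{(2)}$ is $\vec p=(p_1,p_2,p_3)$, and by \eqref{q0} the trace of $u^{(1)}$ is $\vec q=(q_1,q_2,q_3)$.

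Next I would set $u^{(3)}=W_{bdr}(\vec h-\vec p-\vec q)$. Since $W_{bdr}$ is linear in its vector argument and, by \eqref{homoin} together with the representation \eqref{Wbdr}, $W_{bdr}(\vec g)$ solves the homogeneous equation $u_{tt}-\a^2 u_{xx}+\a\beta u_{xxxx}-u_{xxxxxx}=0$ with zero initial data and boundary traces $(u,u_{xx},u_{xxxx})|_{x=0}=\vec g$, the function $u^{(3)}$ solves the homogeneous equation with zero initial conditions and boundary trace $\vec h-\vec p-\vec q$. Adding the three pieces, the PDE and the forcing come from $u^{(1)}+u^{(2)}$, the zero contribution of $u^{(3)}$ to the initial data preserves $u(x,0)=\varphi$, $u_t(x,0)=\psi''$, and the boundary trace of the sum is $\vec p+\vec q+(\vec h-\vec p-\vec q)=\vec h$, exactly as required in \eqref{full}. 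Uniqueness of the solution in the relevant function class then identifies this sum with $u$, giving the stated formula. I would also remark, as the paper already notes, that since $s<\tfrac12$ no compatibility conditions between initial and boundary data are needed, so every trace operation above is well defined and the decomposition is unambiguous.

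The only genuinely delicate point — and the one I would flag as the main obstacle — is ensuring that all the trace evaluations $p(0,t)$, $p_{xx}(0,t)$, $p_{xxxx}(0,t)$ and likewise for $q$ actually make sense and lie in the boundary spaces $\H^s(\R^+)$, so that $W_{bdr}(\vec h-\vec p-\vec q)$ is well-defined; this is where the linear estimates of the next subsection (in the spirit of Lemma \ref{R} and its boundary-trace bounds) are invoked. Granting those estimates, the proposition is essentially a bookkeeping statement: linearity plus matching of data plus uniqueness. So in the write-up I would keep the argument short, cite the whole-line representation $W_R$ and the boundary operator $W_{bdr}$ from the preceding paragraphs, and simply check the PDE, the initial traces, and the boundary traces term by term.
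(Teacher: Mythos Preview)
Your proposal is correct and follows precisely the approach implied by the paper: the proposition is stated there without an explicit proof, being treated as an immediate consequence of combining, by linearity, the three sub-representations derived in the preceding paragraphs (the whole-line solution $W_R(\varphi^*,\psi^*)$, the Duhamel term, and the boundary corrector $W_{bdr}$), together with the remark that for $s<\tfrac12$ no compatibility conditions arise. Your write-up makes explicit exactly the bookkeeping the paper leaves to the reader.
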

\subsection{Linear Estimates}
  In this subsection, we present  estimates  for the linear operators shown in Proposition \ref{represent}, which play an essential role to establish the well-posedness for our problem.
\begin{lem}\label{Lemma, lin est}
Let $s\in\m{R}$, $\frac12<\si< 1$, $ 0<\a \leq 1 $ and $0<T\leq 1$. Then there exists a constant $C=C(s,\sigma)$ such that
\be\label{R1}
\|\eta(t) W_R(f_1,f_2)\|_{X^{s,\frac12,\si}}\leq C\big( \|f_1\|_{H^s(\R)}
+\|f_2\|_{H^{s-1}(\R)}\big), 
\ee
\be\label{R2}
\l\|\eta\Big(\frac{t}{T}\Big) \int_{0}^{t} W_R(0,f)(x,t-t')dt'\r\|_{X_\a^{s,\frac12,\si}}\leq C \l\|\l(\frac{\xi^2 \widehat{f}}{\phi(\xi)}\r)^\vee\r\|_{X_\a^{s,\si-1}}.
\ee
\end{lem}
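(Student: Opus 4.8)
The plan is to derive both estimates from the known IVP-type bounds in Lemma \ref{R} and Lemma \ref{for}, upgrading them to the hybrid $X^{s,b,\sigma}$-norm by separately controlling the low-frequency $\Lambda_\sigma$ piece. For \eqref{R1}, I would first note that the $X_\a^{s,\frac12}$-part of the bound, together with the $C([0,T];H^s)$-part, is exactly what Lemma \ref{R} provides (with $b=\frac12$), so the only new work is showing
\[
\|\eta(t) W_R(f_1,f_2)\|_{\Lambda_\sigma} = \big\|\chi_{|\xi|\leq 1}\langle\tau\rangle^\sigma \widehat{\eta u}(\xi,\tau)\big\|_{L^2_{\xi,\tau}} \leq C\big(\|f_1\|_{H^s}+\|f_2\|_{H^{s-1}}\big).
\]
The point is that for $|\xi|\le 1$ the phase $\phi_\a(\xi)=\sqrt{\xi^6+\a\b\xi^4+\a^2\xi^2}$ satisfies $\phi_\a(\xi)=O(|\xi|)=O(1)$, so $\langle|\tau|-\phi_\a(\xi)\rangle \sim \langle\tau\rangle$ on that frequency region; hence the $\Lambda_\sigma$-norm restricted to $|\xi|\le1$ is controlled by the $X_\a^{s,\sigma}$-norm restricted to $|\xi|\le 1$. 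Since $\sigma\le\sigma_0<1$ and on $|\xi|\le1$ one has $\langle\xi\rangle^s\sim 1$, I can absorb the loss: writing $u=V_1(f_1)+V_2(f_2)$ and using that $\eta\in C_0^\infty$ together with the explicit oscillatory representations \eqref{v01}--\eqref{v02}, the time Fourier transform of $\eta(t)e^{\pm it\phi(\xi)}$ is $\widehat\eta(\tau\mp\phi(\xi))$, which is Schwartz in $\tau\mp\phi(\xi)$ and so decays faster than any power of $\langle\tau\rangle$ once $|\xi|\le 1$ (since then $|\phi(\xi)|\lesssim 1$). Integrating in $\xi$ over $|\xi|\le1$ and using $\widehat f_1\in L^2$, $\xi^2\widehat f_2/\phi(\xi)$ bounded by $|\xi|\,|\widehat f_2(\xi)|\in L^2$ near the origin, gives the bound with constant depending only on $\sigma$ and the fixed cutoff $\eta$.

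For \eqref{R2}, the strategy is parallel: the $X_\a^{s,\frac12}$-component follows from Lemma \ref{for} with the choice $b=\frac12$, $b'=\sigma-1\in(-\frac12,0]$ (legitimate since $\frac12<\sigma<1$ forces $-\frac12<\sigma-1\le0$ and $0\le\frac12\le(\sigma-1)+1=\sigma$), which gives
\[
\Big\|\eta\big(\tfrac{t}{T}\big)\int_0^t W_\R(0,f)(x,t-t')\,dt'\Big\|_{X_\a^{s,\frac12}} \le C\,T^{\sigma-\frac12}\Big\|\big(\tfrac{\xi^2\widehat f}{\phi(\xi)}\big)^\vee\Big\|_{X_\a^{s,\sigma-1}},
\]
and since $0<T\le1$ the factor $T^{\sigma-\frac12}\le1$ can be dropped. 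Again the remaining task is the $\Lambda_\sigma$-piece of the Duhamel term. Here I would again restrict to $|\xi|\le1$, where $\phi_\a(\xi)=O(1)$, and show directly from Lemma \ref{for} (or by repeating its proof for the $b'=\sigma-1$, $b=\sigma$ endpoint, which is still admissible since $0\le\sigma\le(\sigma-1)+1$) that the $X_\a^{s,\sigma}$-norm of the Duhamel term is controlled by the right side; then as before $\langle\tau\rangle\sim\langle|\tau|-\phi_\a(\xi)\rangle$ on $|\xi|\le1$ lets me pass from that $X_\a^{s,\sigma}$-bound to the $\Lambda_\sigma$-bound, using $\langle\xi\rangle^s\sim1$ there.

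The main obstacle I anticipate is the endpoint nature of the estimates — using $b=\frac12$ rather than $b<\frac12$ — in the $X_\a^{s,\frac12}$ component. Lemma \ref{R} is stated for all $b\in\R$, so \eqref{R1} is fine; but for \eqref{R2}, Lemma \ref{for} requires $0\le b\le b'+1$, and with $b=\frac12$ we need $b'\ge-\frac12$, which is satisfied strictly since $b'=\sigma-1>-\frac12$, so the crucial time-gain $T^{1+b'-b}=T^{\sigma-\frac12}$ has a \emph{positive} exponent and is harmless. Thus the apparent endpoint issue is not actually a loss here. The one genuinely delicate point is justifying that the $\Lambda_\sigma$-norm of the Duhamel operator obeys the same bound; I expect this to follow from the proof of Lemma \ref{for} because on $|\xi|\le1$ the multiplier $\phi_\a(\xi)$ and the resonance function are both $O(1)$, so no derivative loss in $\xi$ occurs and the only subtlety is the $\langle\tau\rangle^\sigma$ weight, handled exactly as in the standard Duhamel-in-Bourgain-space estimate with $b'=\sigma-1$, $b=\sigma$. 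I would spell this out as a short separate lemma or remark rather than re-proving Lemma \ref{for} in full.
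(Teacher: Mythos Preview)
Your proposal is correct and is essentially the paper's argument spelled out in more detail. The paper's proof is a single line: it simply observes the embedding $X_\a^{s,b}\subset X_\a^{s,\frac12,\sigma}$ for $b\geq\sigma>\frac12$ (which is exactly your low-frequency observation $\langle\tau\rangle\sim\langle|\tau|-\phi_\a(\xi)\rangle$ on $|\xi|\le1$ together with $\langle\xi\rangle^s\sim1$ there), and then applies Lemma~\ref{R} with $b=\sigma$ for \eqref{R1} and Lemma~\ref{for} with $b=\sigma$, $b'=\sigma-1$ for \eqref{R2}. Your separate treatment of the $X_\a^{s,\frac12}$ and $\Lambda_\sigma$ pieces, and your direct Schwartz-decay computation for the $\Lambda_\sigma$ part of \eqref{R1}, are perfectly fine but unnecessary once the embedding is stated; in particular, for \eqref{R2} you end up invoking Lemma~\ref{for} twice (at $b=\frac12$ and at $b=\sigma$) when one application at $b=\sigma$ suffices.
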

\noindent The proof for this lemma follows directly from Lemma \ref{R} and \ref{for} by noticing
\[X_\a^{s,b} \subset X_\a^{s,\frac12,\si}, \quad \mbox{for all } b>\si>\frac12. \]

\begin{lem}\label{lemma3}
For $-\frac34 < s \leq -\frac12$, there exist $ C=C(s) $ and an extension  $\Phi_{bdr}(\vec{h})$ with
\[\Phi_{bdr}(\vec{h})= W_{bdr}(\vec{h}), \quad \mbox{ for }\quad  (x,t)\in \R^+\times [0,1],\]
such that for any $\si\in(\frac12,1)$, one has 
\be\label{bd2}
\sup_{x\geq0}\|\partial_x^j[ \eta(t)\Phi_{bdr}(\vec{h})](x,\cdot)\|_{H^{\frac{s-j+1}{3}}(\R^+)}\leq C \|\vec{h}\|_{\H^s(\R^+)},\ee
\be\label{bd1}\sup_{t\geq0}\|[\eta(t)\Phi_{bdr}(\vec{h})](\cdot,t)\|_{H^{s}(\R^+)}\leq C \|\vec{h}\|_{\H^s(\R^+)},\ee
\be\label{bd}
\|\eta(t) \Phi_{bdr}(\vec{h})\|_{X_\a^{s,1/2,\si}} \leq C \|\vec{h}\|_{\H^s(\R^+)}.
\ee
where $j=0,1,2...,5.$
\end{lem}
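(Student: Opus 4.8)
The plan is to follow the strategy of Colliander--Kenig / Holmer (and the adaptation in \cite{15} for KdV), adapted to the sixth order dispersion. The first step is to organize $W_{bdr}(\vec h)$ according to the decomposition \eqref{Wbdr} into the pieces $I_m$, $\overline I_m$ (the ``oscillatory'' or high-frequency part, parametrized by $\mu\ge 1$ with phase $\phi(\mu)$) and $W^l_{bdr,m}(h_m)$, $l=1,2,3$ (the Laplace-contour pieces, which include a genuinely low-frequency regime when $\rho$ runs near the imaginary axis and an exponentially damped regime when $\Re\rho$ is bounded away from $0$). For the $I_m$ pieces one changes variables from $\mu$ to $\lambda=\phi(\mu)\sim\mu^3$, so that $I_m$ becomes, modulo smooth symbols that are bounded by Lemma \ref{equ}-type estimates, a superposition $\int e^{i\lambda t}e^{\gamma_j^+(\mu(\lambda))x}\,m(\lambda)\widehat{h}_m(\lambda)\,d\lambda$; since $\gamma_1^+=-i\mu$ gives a purely traveling wave and $\gamma_2^+,\gamma_3^+$ have $\Re\gamma_j^+=-p(\mu)\lesssim -\mu$, the latter two produce an $x$-integrable, smoothing contribution, while the $\gamma_1^+$ term is exactly of the same type as the free evolution $V_1$ and is handled by Lemma \ref{R}. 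The key point throughout is that the symbol ratios $\Delta_{j,m}^+(\mu)/\Delta^+(\mu)$ times $\tfrac{3\mu^4+2\a\mu^2+\a^2}{\sqrt{\mu^4+\a\mu^2+\a^2}}$ are, after the $\mu\mapsto\lambda$ change of variables, classical symbols of the right order; I would state this as a preliminary claim (or cite \cite{96-1,96-2}) and not belabor the Cramer's-rule bookkeeping.

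The crucial modification relative to \cite{96-1}, as the introduction emphasizes, is the choice of extension $\Phi_{bdr}(\vec h)$ of $W_{bdr}(\vec h)$ off the quarter-plane. Rather than the zero extension, one defines $\Phi_{bdr}$ by replacing each traveling-wave factor $e^{\gamma_1^+(\mu)x}=e^{-i\mu x}$, valid for $x>0$, by a factor that equals it on $x>0$ but decays as $x\to-\infty$; concretely, following \cite{15}, one inserts a cutoff/Fourier-multiplier in the $x$-variable (roughly, replace $e^{-i\mu x}$ by an expression whose spatial Fourier transform is supported near $\xi=-\mu$ and is a bump there) so that the resulting function lies in $X^{s,1/2}_\a(\R^2)$ and its trace on $x=0$, together with $\partial_x^j$-traces, still reproduces $h_m$. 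For the $\gamma_2^+,\gamma_3^+$ and the Laplace pieces, which already decay in $x$ on $x>0$, a straightforward even/mirror or multiplication-by-bump extension suffices. After this, \eqref{bd2} follows from the one-dimensional Sobolev trace estimates for the free group (Lemma \ref{R}, the $\sup_x$ line) applied to each extended piece; \eqref{bd1} follows from the $\sup_t$ line of Lemma \ref{R} together with the fact that the extension preserves $H^s_x$ uniformly in $t$; and \eqref{bd} is the Bourgain-space bound, which is where the new extension pays off: because $\Phi_{bdr}$ is now, microlocally, a sum of free solutions with data in $H^s(\R)$ (for the $\gamma_1^+$ part) plus exponentially $x$-smoothing terms, one can invoke the last line of Lemma \ref{R} (the $X^{s,b}_\a$ bound), and then use the embedding $X^{s,b}_\a\subset X^{s,1/2,\si}_\a$ noted after Lemma \ref{Lemma, lin est} once $b$ is taken $>\si$; the $\Lambda_\si$ (low-frequency, $|\xi|\le 1$) component of the norm is controlled by the near-imaginary-axis Laplace pieces, using $\langle\tau\rangle^\si\lesssim\langle\rho\rangle^\si$ on that contour and $\sigma<1$.

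The main obstacle is \eqref{bd}, specifically making the extension simultaneously (a) reproduce all three boundary traces $u(0,\cdot),u_{xx}(0,\cdot),u_{xxxx}(0,\cdot)$ exactly on $[0,1]$, (b) decay in $x$ so that the pieces with $\Re\gamma_j^+<0$ are globally $L^2_x$ (not just $L^2_{x>0}$), and (c) land in $X^{s,1/2}_\a$ with the right power of $\langle\xi\rangle$ — the last is delicate precisely in the range $-\tfrac34<s\le-\tfrac12$, where, as the authors note, the naive zero extension fails. The resolution is to exploit that the relevant dispersion relation $|\tau|=\phi(\mu)\sim\mu^3$ matches (after Lemma \ref{equ}) a KdV-type curve $|\tau|=|\xi|^3+\tfrac{\a\b}{2}|\xi|$, so that the $X^{s,1/2}_\a$-bound for the extended traveling-wave part is literally the KdV boundary-operator estimate of \cite{15}; I would isolate that computation as the heart of the proof, do the change of variables carefully to see the Jacobian $d\lambda/d\mu\sim\mu^2$ combine correctly with the symbol orders, and handle the boundary near $\mu=1$ (the junction between the $I_m$ and $W^l_{bdr,m}$ regimes) by a partition of unity in $\mu$, absorbing the transitional region into the smooth, exponentially-damped estimates. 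The Laplace-contour pieces $W^2_{bdr,m}$ (with $\Re\rho\in[1-\epsilon,1]$... actually $\Re\rho$ bounded below by a fixed constant away from $0$ on the vertical segment, here near $\Re\rho=1$) are the easiest: $e^{\gamma_j(\rho)x}$ decays and $e^{\rho t}$ is smooth in $t$ with $\eta(t)$ truncation, so they contribute $C^\infty$-in-time, Schwartz-in-$x$ pieces bounded in every norm by $\|\vec h\|_{\H^s}$ via the analyticity of $\Delta_{j,m}/\Delta$ on that contour.
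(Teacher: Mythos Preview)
Your overall architecture is right and matches the paper: decompose $W_{bdr}$ via \eqref{Wbdr}, treat the Laplace-contour pieces $W^l_{bdr,m}$ and the estimate \eqref{bd2} by the arguments already in \cite{96-1}, and for the oscillatory pieces $I_m,\bar I_m$ pass to $\lambda=\phi(\mu)$ and exploit the equivalence (Lemma \ref{equ}) with a KdV-type Bourgain norm so that the Bona--Sun--Zhang \cite{15} machinery can be invoked.

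However, you have the roles of the $\gamma_1^+$ piece and the $\gamma_2^+,\gamma_3^+$ pieces reversed, and this is a genuine gap. In the paper's proof the term $I_{1,1}$ with $\gamma_1^+(\mu)=-i\mu$ is \emph{not} the one requiring a new extension: since $e^{-i\mu x}$ is already globally defined, $I_{1,1}$ is of free-evolution type and is handled exactly as in \cite{96-1}, independently of the values of $s$ and $b$. Your proposed modification of $e^{-i\mu x}$ into something ``decaying as $x\to-\infty$'' with Fourier support near $\xi=-\mu$ is unnecessary and, as stated, not well-defined (the factor already has Fourier transform $\delta(\xi+\mu)$). The pieces that actually force the new extension are $I_{1,2}+\bar I_{1,2}$ and $I_{1,3}+\bar I_{1,3}$, the ones with $\Re\gamma_j^+(\mu)=-p(\mu)$. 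It is precisely for these exponentially-decaying-in-$x$ terms that the zero extension of \cite{96-1} fails when $s\le-\tfrac12$: the paper writes the real combination $E(x,t)$ in \eqref{Ee}, and then defines the extension $E^*$ for $x<0$ by the explicit Bona--Sun--Zhang construction (formula (2.14) of \cite{15}), after which the $X_\a^{s,1/2}$ bound is read off from the KdV computation using that $\phi(\mu)\sim\mu^3$, $p(\mu)\sim\mu$, $q(\mu)\sim\mu$ for $\mu\ge 1$ and small $\a$.

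Your claim that for $\gamma_2^+,\gamma_3^+$ ``a straightforward even/mirror or multiplication-by-bump extension suffices'' is exactly the step that does \emph{not} go through in the range $-\tfrac34<s\le-\tfrac12$; this is where the delicate \cite{15} extension is required, and it is the heart of the lemma. If you reorganize your argument so that the \cite{15}-type extension is applied to $E(x,t)$ (the $\gamma_2^+,\gamma_3^+$ contribution) rather than to the traveling-wave factor, and leave $I_{1,1}$ to the free-evolution estimates of Lemma \ref{R} as in \cite{96-1}, your outline becomes the paper's proof.
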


\begin{proof}
	The proof of estimate \eqref{bd2} can be referred to Proposition 3.5 in  \cite{96-1}. 
	For estimates \eqref{bd1} and \eqref{bd},  it suffices to carry out the proof for  $\vec{h}=(h_1,0,0)$. Let us recall  \eqref{Wbdr} and write 
	\[W_{bdr}(\vec{h})=I_1+\bar{I}_1+\sum_{l=1}^3 W_{bdr,1}^l(h_1),\]
	as defined in \eqref{W10}-\eqref{W40}. Although the value of $s$ and $b$ in   Proposition 3.4 and 3.5 in \cite{96-1}  are different from here,  the proof of estimates for  $W^l_{bdr,1}(h_1)$ with $l=1,2,3$ is  still valid  in this lemma. 	It thus suffices to establish the conclusion for $I_1$ and $\bar{I}_1$. Recall from  \eqref{W40}-\eqref{gamma},  we write $I_1$ as 
	\begin{align*}
	I_1(x,t)&=\sum^3_{j=1}\int^{+\infty}_1 e^{\gamma_{j}^+(\mu)x}e^{i\phi(\mu)t}
	\frac{\Delta_{j,m}^+(\mu)}{\Delta^+(\mu)}\frac{3\mu^4+2\a\mu^2+\a^2}{\sqrt{\mu^4+\a\mu^2+\a^2}}\int^\infty_0 e^{-i\phi(\mu )s}h_1(s)dsd\mu\\
	:&=\sum^3_{j=1}I_{1,j}(x,t).	
	\end{align*}
	Again, the estimates for $I_{1,1}$ with $\gamma_1^+(\mu)=-i\mu$ can be obtained as in \cite{96-1} regardless  the restrictions on $s$ and $b$. Finally, we will establish the estimates for $I_{1,2}$ and $\bar{I}_{1,2}$  by adapting the extension inherited from Bona-Sun-Zhang \cite{15},   estimates for $I_{1,3}$ and $\bar{I}_{1,3}$ can be achieved similarly.  Let us denote $\mu(\lambda)$ be  the  solution of  $$\lambda=\phi(\mu)=\sqrt{\mu^6+\a \mu^4+\a^2\mu^2}$$ for $\mu\geq 1$ and $\lambda\geq a$ with $a=\sqrt{1+\a+\a^2}$, which leads to 
	\begin{align}
	I_{1,2}+\bar{I}_{1,2}&=2 Re \int^{+\infty}_a \int^\infty_0 e^{\left(-p(\mu(\lambda))-iq(\mu(\lambda))\right)x} e^{i\phi(\mu(\lambda))(t-s)}
	\frac{\Delta_{2,1}^+(\mu(\lambda))}{\Delta^+(\mu(\lambda))}h_1(s)ds d\lambda\nonumber\\
	 &= 2 \int^{+\infty}_a \int^\infty_0 e^{-p(\mu(\lambda))x} \cos{\Big(\phi(\mu(\lambda))(t-s)-q(\mu(\lambda))\Big)}
	 \frac{\Delta_{2,1}^+(\mu(\lambda))}{\Delta^+(\mu(\lambda))}h_1(s)ds d\lambda\nonumber\\
	 :&= E(x,t)\label{Ee}
	 \end{align}
	for $x\geq 0$. We now extend $E(x,t)$ from $\R^+$ to $\R$ by defining
	\[E^*(x,t)=\begin{cases}
	E(x,t), \quad x\geq 0\\
	g(x,t), \quad x<0,
	\end{cases}\]
	where $g(x,t)$ is defined exactly as formula (2.14) in Bona-Sun-Zhang \cite{15} for the KdV equation. Recall that the Bourgain space for the sixth order Boussinesq equation,
	\[\|w\|_{X_\a^{s,b}(\R^2)}\sim \|\langle \xi\rangle^s\langle |\tau|-|\xi|^3-\frac{\a }{2} |\xi|\rangle^b\widehat{w}(\xi,\tau)\|_{L^2_{\xi,\tau}(\R^2)}.\]
	 is very similar to the one for the KdV equation,
	 	\[\|w\|^{KdV}_{X^{s,b}(\R^2)}=\|\langle \xi\rangle^s\langle \tau-\xi^3+ \xi\rangle^b\widehat{w}(\xi,\tau)\|_{L^2_{\xi,\tau}(\R^2)}.\]
	  We can then apply Bona-Sun-Zhang's strategy for the desired estimates on $E^*(x,t)$ by noticing the following two facts. First, the proof for the KdV equation in \cite{15} does not take the sign for $\tau$ and $\xi$ into account. Secondly, for $\a$ small, one has 
	  \[\phi(\mu)=\sqrt{\mu^6+\a \mu^4+\a^2\mu^2}\sim \mu^3, \quad p(\mu ) \sim \mu,\quad q(\mu)\sim \mu,\]
	  for $\mu\geq 1$, thus the influence of $\a$ and slightly differences in $E(x,t)$ (See. \eqref{Ee}) can be ignored. 
\end{proof}

Next, we provide an alternative version of Kato smoothing for the forcing term with slightly higher regularities as needed  in Lemma \ref{Lemma, lin est} as well as in Proposition 3.7 in \cite{96-1}. This is because that we can not fully convert the regularities from time to space within optimal conditions when including derivatives on space in \eqref{f2}. One way to overcome this problem is to adapt the idea from the KdV equation \cite{47}, Coupled-KdV system \cite{96-2} and   Boussinesq-type equations  \cite{97,96}  by compensating another modified norm on the right hand side of \eqref{f2}. Other than that, in this article, we do not adding the modified norm but relax the regularity restriction. Due to the specialty of the  nonlinear term for the SBOE, we can still achieve a proper bilinear estimate under this relaxed norm in the next section.

\begin{pro}\label{kato}
For $-1\leq s\leq 0$ and $\si>\frac12$, there exists $ C=C(s,\si) $ such that for any $j=1,2...,5$,
\begin{equation}\label{f2}
\left\|\eta(t)\int^t_0 \partial_x^j[ W_R(t-t')(0 ,f)](x,t')dt' \right\|_{H_t^{\frac{s-j+1}{3}}(\R)}\leq C \l\|\l(\frac{\xi^2 \widehat{f}}{\phi(\xi)}\r)^\vee\r\|_{X_\a^{s+2,\si-1}}.
\end{equation}
\end{pro}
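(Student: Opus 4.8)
The plan is to reduce the estimate to a purely temporal statement about the trace of the Duhamel term and then exploit the extra two derivatives of regularity on the right-hand side of \eqref{f2} to absorb the loss incurred in converting spatial regularity to temporal regularity. First I would write $v(x,t)=\int_0^t [W_\R(0,f^*)](x,t-t')\,dt'$, where $f^*$ is any extension of $f$ to $\R^2$ (ultimately taken to realize the infimum in the relevant quotient norm, though here $f$ is already given on all of $\R$ after the reductions). By Duhamel and the explicit formula \eqref{v01}--\eqref{v02}, $v$ is a superposition of the free profiles $e^{i(\pm t\phi(\xi)+x\xi)}$ against the symbol $\xi^2\widehat f(\xi,\tau)/\phi(\xi)$, modulated by the time-integration kernel. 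Applying $\partial_x^j$ pulls down a factor $(i\xi)^j$, and evaluating the trace at a fixed $x$ (after multiplying by the cutoff $\eta(t)$) is controlled uniformly in $x$ by a standard argument: the $x$-dependence sits only in $e^{\gamma_j^+(\mu)x}$-type factors with $\mathrm{Re}\,\gamma_j^+<0$ on the relevant piece, or in $e^{ix\xi}$ on the oscillatory piece, and in either case taking an $L^\infty_x$ of an $L^2_t$ norm is handled as in Lemma~\ref{R} and Proposition~3.7 of \cite{96-1}.

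The second step is the change-of-variables / Kato-smoothing computation. On the oscillatory region one substitutes $\lambda=\phi(\xi)$ (or $\lambda=\tau$ after the $t'$-integration produces the usual $\langle\tau-\phi(\xi)\rangle^{-1}$ denominator), so that the $H^{\frac{s-j+1}{3}}_t$ norm becomes an $L^2_\lambda$ norm with weight $\langle\lambda\rangle^{\frac{2(s-j+1)}{3}}$; since $\lambda=\phi(\xi)\sim|\xi|^3$ for $|\xi|\gtrsim 1$ and $\lambda\sim\alpha|\xi|$ for $|\xi|\lesssim 1$ (using Lemma~\ref{equ} and the asymptotics $\phi(\mu)\sim\mu^3$, recorded already in the proof of Lemma~\ref{lemma3}), the Jacobian $d\lambda/d\xi\sim\langle\xi\rangle^2$ converts $\langle\lambda\rangle^{\frac{s-j+1}{3}}$ precisely into $\langle\xi\rangle^{s-j+1}$ times an $L^2_\xi$-Jacobian correction $\langle\xi\rangle$. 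Combining with the factor $|\xi|^j$ from $\partial_x^j$ and the factor $\xi^2/\phi(\xi)\sim\langle\xi\rangle^{-1}$ already present in the amplitude, the net weight in $\xi$ is $\langle\xi\rangle^{j}\cdot\langle\xi\rangle^{s-j+1}\cdot\langle\xi\rangle^{-1}\cdot$ (Jacobian) $\sim\langle\xi\rangle^{s+1}$, which after accounting for the $\langle\tau-\phi(\xi)\rangle$ denominator of the time integration (controlled for $\sigma-1\le 0$ exactly as in Lemma~\ref{for}) reproduces the $X_\alpha^{s+2,\sigma-1}$ norm of $(\xi^2\widehat f/\phi)^\vee$ after one accounts for the mismatch: the two extra derivatives on the right are exactly what is needed to cover the low-frequency degeneracy of $\phi$ (where $\phi(\xi)\sim\alpha|\xi|$ rather than $|\xi|^3$, so the change of variables loses powers of $\langle\xi\rangle^2$) uniformly in $0<\alpha\le 1$, and to absorb the derivative loss $j$ without appealing to a supplementary modified norm. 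I would carry out the high-frequency case ($|\xi|\gtrsim 1$) first, where the estimate is essentially that of \cite{47,96}, and then treat $|\xi|\lesssim 1$ separately using the $\Lambda_\sigma$-type control and the boundedness of all symbols there.

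The main obstacle is the low-frequency region $|\xi|\lesssim 1$ combined with the non-integer temporal index $\frac{s-j+1}{3}$, which for $j$ as large as $5$ is quite negative. There the change of variables $\lambda=\phi(\xi)$ degenerates ($\phi'(\xi)\to 0$ as $\xi\to 0$), so one cannot simply transfer the $\langle\lambda\rangle$-weight to a $\langle\xi\rangle$-weight; instead one must use that on $|\xi|\le 1$ the weight $\langle\lambda\rangle^{\frac{s-j+1}{3}}$ is bounded (since $\lambda$ ranges over a bounded set once $|\xi|\le 1$, or one simply uses $\langle\lambda\rangle\sim 1$ on the oscillatory part and $\langle\tau\rangle^{\sigma-1}\le 1$ on the remainder), and that the amplitude $|\xi|^j\cdot\xi^2/\phi(\xi)\sim|\xi|^{j+1}/\alpha$ is integrable in $\xi$ near $0$ — here the gain $\langle\xi\rangle^{2}$ hidden in passing from $s$ to $s+2$ on the right is what guarantees the needed summability and the uniformity in $\alpha$. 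Checking that everything is genuinely uniform in $\alpha\in(0,1]$ — in particular that no inverse power of $\alpha$ survives — is the delicate bookkeeping point, and it is precisely why the statement is phrased with $s+2$ rather than $s$ (or $s+1$). Once the two frequency regimes are combined, the cutoff $\eta(t)$ localizes time to $(-2,2)$ and the $H^{\frac{s-j+1}{3}}_t$ norm over $\R$ is finite, completing the proof.
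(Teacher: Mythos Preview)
Your sketch captures the Kato--smoothing heuristic (convert $\langle\tau\rangle^{\frac{s-j+1}{3}}$ into $\langle\xi\rangle^{s-j+1}$ via $\phi(\xi)\sim|\xi|^3$, absorb the $|\xi|^j$ from $\partial_x^j$, and use the extra $\langle\xi\rangle^2$ on the right), but there is a genuine gap: you treat all $j=1,\dots,5$ on the same footing, and that uniform argument does \emph{not} close for $j\ge 3$. After the $t'$--integration you obtain the kernel $(e^{it\tau}-e^{it\phi(\xi)})/(\tau-\phi(\xi))$, and the paper splits it as $A_1+A_2-A_3$ with a cutoff $\theta(\tau-\phi(\xi))$. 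The pieces $A_1$ (Taylor expansion near $\tau=\phi$) and $A_3$ (free evolution $e^{it\phi(\xi)}$) reduce to Lemma~\ref{R} and cause no trouble. The crucial piece is $A_2$, where one is left with $\langle\tau-\phi(\xi)\rangle^{-1}e^{it\tau}$ and must estimate directly in $H_t^{\frac{s-j+1}{3}}$ via Plancherel in $\tau$. The Cauchy--Schwarz step there leads to the auxiliary integral
\[
G_j(\tau)=\int_{\m{R}}\frac{|\xi|^{2j}}{\langle\xi\rangle^{2s+4}\,\langle|\tau|-\phi(\xi)\rangle^{2\sigma}}\,d\xi,
\]
and after $z=\phi(\xi)$ on $|\xi|>1$ this is $\int_1^\infty z^{(2j-2s-6)/3}\langle|\tau|-z\rangle^{-2\sigma}\,dz$. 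For $j=2$ the exponent $(2j-2s-6)/3\le 0$ on the stated range $-1\le s\le 0$, so $G_2(\tau)\lesssim 1$; but for $j=5$ and $s=-1$ the exponent equals $2$ and the integral \emph{diverges} for every $\sigma<3/2$, hence in particular for all $\sigma\in(\tfrac12,1)$. Your accounting ``net weight $\sim\langle\xi\rangle^{s+1}$, matched by $\langle\xi\rangle^{s+2}$ on the right'' hides exactly this loss. The paper circumvents it by the algebraic identity
\[
\partial_x^3\Big(\int_0^t W_\R(t-t')(0,f)\,dt'\Big)=\partial_t\Big(\int_0^t W_\R(t-t')(0,f)\,dt'\Big)
\]
(Lemma~2.12 of \cite{96}), which trades three spatial derivatives for one time derivative and thereby reduces $j=3,4,5$ back to $j=0,1,2$. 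This reduction is the missing idea in your proposal.

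Two smaller points. First, the decomposition into $A_1,A_2,A_3$ is not optional: the raw denominator is $\tau-\phi(\xi)$, not $\langle\tau-\phi(\xi)\rangle$, and the singularity at $\tau=\phi(\xi)$ must be resolved (the paper does this by Taylor expansion on the support of $\theta$). Your passage ``the $t'$--integration produces the usual $\langle\tau-\phi(\xi)\rangle^{-1}$ denominator'' skips this. Second, the reference to ``$e^{\gamma_j^+(\mu)x}$--type factors with $\mathrm{Re}\,\gamma_j^+<0$'' is out of place: those belong to the boundary operator $W_{bdr}$, not to $W_\R$, which is purely oscillatory in $x$. Relatedly, your concern about an inverse power of $\alpha$ at low frequency is misplaced: the paper works directly with $F(\xi,\tau)=\xi^2\widehat f/\phi(\xi)$, so any such factor is already inside the norm on the right and no separate $\alpha$--uniformity argument is required.
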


\begin{proof}
The following proof follows from the idea in \cite{15} and \cite{53}. We write
\begin{align*}
    \int^t_0  [W_R(t-t')]&(0, f(x,t'))dt' = \int_0^t \int_\R  e^{ix \xi }\l(e^{i\phi(\xi)(t-t')}-e^{-i\phi(\xi)(t-t')}\r)\frac{\xi^2\widehat{f}^x(\xi,t')}{i\phi(\xi)}d\xi dt',
\end{align*}
where $\widehat{f}^x$ denote the Fourier transform with respect to  $x$.
It then suffices to establish the estimate for
\begin{align*}
A:=&\int_0^t \int_\R e^{ix \xi }e^{i\phi(\xi)(t-t')}\frac{\xi^2\widehat{f}^x(\xi,t')}{i\phi(\xi)}d\xi dt'\\
=& \int_\R \int_\R e^{ix \xi }\frac{\xi^2\widehat{f}(\xi,\tau)}{i\phi(\xi)}e^{i\phi(\xi)t}\int^t_0 e^{it'(\tau-\phi(\xi))}dt'd\xi d\tau\\
=& -\int_\R \int_\R e^{ix \xi }\frac{\xi^2\widehat{f}(\xi,\tau)}{\phi(\xi)} \frac{e^{it\tau}-e^{it\phi(\xi)}}{\tau-\phi(\xi)}d\xi d\tau\\
:=&-(A_1+A_2-A_3),
\end{align*}
where
\begin{align*}
A_1&=\int_{\R^2}e^{ix\xi}\frac{e^{it\tau}-e^{it\phi(\xi)}}{\tau-\phi(\xi)}\theta
(\tau-\phi(\xi))\frac{\xi^2
	\widehat{f}(\xi,\tau)}{\phi(\xi)}d\xi d\tau ,\\
A_2&=\int_{\R^2}e^{ix\xi}\frac{e^{it\tau}}{\tau-\phi(\xi)}\theta^c(\tau-\phi(\xi))
\frac{\xi^2\widehat{f}(\xi,\tau)}{\phi(\xi)}d\xi d\tau ,\\
A_3&=\int_{\R^2}e^{ix\xi}\frac{e^{it\phi(\xi)}}{\tau-\phi(\xi)}\theta^c(\tau-\phi(\xi))
\frac{\xi^2\widehat{f}(\xi,\tau)}{\phi(\xi)}d\xi d\tau ,
\end{align*}
with
$\theta$ being a cut-off function such that $\theta=1$ on $[-1,1]$ and $\text{supp} (\theta )\subset (-2,2)$, and  denoting $\theta^c=1-\theta$.  

For $A_1$, using Taylor expansion, one has
\begin{equation*}
A_1=\int_{\R^2}e^{ix\xi}e^{it\phi(\xi) }\sum^{\infty}_{k=0}\frac{(it)^{k}(\tau-\phi(\xi))^{k-1}}{k!}\theta(\tau-\phi(\xi))
\frac{\xi^2\widehat{f}(\xi,\tau)}{\phi(\xi)}d\tau d\xi.
\end{equation*}
Besides,  we can re-write $A_1$ as
\[A_1=\sum^{\infty}_{k=0}\frac{(it)^k}{k!} W^*_R( \Psi, 0),\]
where
\[\widehat{\Psi}=\int_\R(\tau-\phi(\xi))^{k-1}\theta(\tau-\phi(\xi))
\frac{\xi^2\widehat{f}(\xi,\tau)}{\phi(\xi)}d\tau,\] 
here, the operator $W^*_R $ is defined similar to \eqref{v01} but with the first part.
Hence, according to Lemma \ref{R} and setting $F(\xi,\tau)=\frac{\xi^2\widehat{f}(\xi,\tau)}{\phi(\xi)}$, one has,
\begin{align*}
&\|\eta(t)\partial_x^j A_1\|_{H_t^{\frac{s-j+1}{3}}(\R)}\lesssim  \sum^\infty_{k=0}\frac{\|t^k \eta(t)\|_{H^1(\R)}}{k!} \|\Psi\|_{H^s_x(\R)}\\
\lesssim& \sum^\infty_{k=0}\frac{\|t^k \eta(t)\|_{H^1(\R)}}{k!}\left\|\int_\R \la \xi \ra^s (\tau-\phi(\xi))^{k-1}\theta(\tau-\phi(\xi))F(\xi,\tau) d\tau \right\|_{L_x^{2}(\R)}\\
\lesssim & \l\|\int_{|\tau-\phi(\xi)|\leq1} \frac{1}{\la \xi \ra^{ 2}\la |\tau|-\phi(\xi)\ra^{ \si}} \frac{\la \xi \ra^{s+2}F(\xi,\tau)}{\la |\tau|-\phi(\xi)\ra^{1-\si}}d\tau\r\|_{L^2_x(\R)}\\
\lesssim &  \l(\int_\R \frac{1}{\la \xi \ra^{ 4}\la |\tau|-\phi(\xi)\ra^{ 2\si}} d\tau \r)^{\frac12}\|F\|_{X_\a^{s+2,\si-1}}:=G_1(\xi)\|F\|_{X_\a^{s+2,\si-1}},
\end{align*}
by noticing that $|\tau-\phi(\xi)|\geq ||\tau|-\phi(\xi)|$.
Similarly, for $A_3$, we have the re-write as,
\[ A_3=   W_R^*(\Phi,0),\]
where
\[\widehat{\Phi}(\xi)=\int_\R \frac{\th^c(\tau-\phi(\xi))F(\xi,\tau) }{\tau-\phi(\xi)}d\tau.\]
Then, according to Lemma \ref{R}, one has
\begin{align*}
\|\eta \partial_x^j A_3\|_{H_t^{\frac{s+j-1}{3}}(\R)}\lesssim \|\Phi\|_{H^{s}(\R)}\lesssim &\left\|\la\xi\ra^{s}\int_\R\frac{\th^c(\tau-\phi(\xi))F(\xi,\tau) }{\tau-\phi(\xi)}d\tau \right\|_{L^2_\xi}\\
\lesssim & \l\|\int_\R \frac{1}{\la\xi\ra^{ 2}\la|\tau|-\phi(\xi)\ra^{\si}}\frac{\la\xi\ra^{s+2}F}
{\la|\tau|-\phi(\xi)\ra^{1-\si}}\r\|_{L^2_\xi}\\
\lesssim & G_1(\xi) \|F\|_{X_\a^{s+2,\si-1}}.
\end{align*}
Since, for $\si>\frac12$,
\[G_1^2(\xi)=\frac{1}{\la \xi \ra^4} \int_\R \frac{1}{\la|\tau|-\phi(\xi)\ra^{2\si}}d\tau\lesssim \frac{1}{\la \xi \ra^4}\lesssim 1,\]
 we  can complete the proof for $A_1$ and $A_3$. While for $A_2$, we first show the estimate for $j=0,1,2$. It suffices to proof the case for $j=2$. According to Plancherel theorem, it follows that,
\begin{align*}
\|\eta(t)  \partial_x^2 A_2\|_{H_t^{\frac{s-1}{3}}(\R)}\lesssim & \left\|\int_{\R^2} \la\xi\ra^2 e^{it\tau} e^{ix\xi} \frac{\theta^c(\tau-\phi(\xi))}{\tau-\phi(\xi)} F(\xi,\tau)d\xi d\tau\right\|_{H_t^{\frac{s-1}{3}}(\R)}\nonumber\\
\lesssim &\left\|\langle\tau\rangle^{\frac{s-1}{3}} \int_\R\frac{\theta^c(\tau-\phi(\xi))}{\tau-\phi(\xi)}\la\xi\ra^2 F(\xi,\tau)d\xi\right\|_{L^2_\tau}\nonumber\\
\lesssim & \left\|\langle\tau\rangle^{\frac{s-1}{3}} \int_\R\frac{\la\xi\ra^2 F(\xi,\tau)}{\langle\tau-\phi(\xi)\rangle}d\xi\right\|_{L^2_\tau}\label{A2}\\
\lesssim & \bigg[\int_\R \langle\tau\rangle^{\frac{2s-2}{3}} G_2(\tau)
\bigg(\int_\R\frac{\la\xi\ra^{2s+4}
F^2(\xi,\tau)}{\la |\tau|-\phi(\xi)\ra^{2-2\si}}d\xi\bigg) d\tau\bigg]^\frac{1}{2},
\end{align*}
with
\begin{align*}
G_2(\tau)&= \int_\R \frac{d\xi}{\la \xi\ra^{2s} \la |\tau|-\phi(\xi)\ra^{2\si}}.
\end{align*}
We can readily check that  
\[\langle\tau\rangle^{\frac{2s-2}{3}}G_2(\tau)  \lesssim 1,\]
for $|\xi|\leq 1$ and $s\leq0$. While for $|\xi| >1$, by changing variable with $z=\phi(\xi)$, one has.
\begin{align*}
G_2(\tau)\lesssim \int_\R \frac{dz}{ |z|^{\frac23}\la z\ra^{\frac{2s}{3}} \la |\tau|-z\ra^{2\si}}.
\end{align*}
For $-1\leq s\leq 0$ and $\si>\frac12$, we can estimate the above integral for $|z|>1$, and it leads to 
\[G_2(\tau)\lesssim \int_\R \frac{dz}{\la |\tau|-z\ra^{2\si}} =\int_\R \frac{dz}{\la z\ra^{2\si}}\lesssim 1.\]
  Hence, we complete the proof for $A_2$ with $j=2$. Next, we consider the case for $j=3$.
 It has shown (c.f.  Lemma 2.12 in \cite{96}) that
\begin{equation*}
 \partial_x^3\left(\int^t_0[W_R(t-t')](0,f(x,t'))dt'\right)=\partial_t\left(\int^t_0[W_R(t-t')](0,f(x,t'))dt'\right).
\end{equation*}
Therefore, it yields that,
\begin{align*}
  &\left\| \eta(t)\int^t_0 \partial_x^3 [W_{R}(t-t')](0,f(x,t'))dt'\right\|_{H_t^{\frac{s-2}{3}}(\R)}\\
  = &\left\| \eta(t)\partial_t\left(\int^t_0  [W_{R}(t-t')](0,f(x,t'))dt'\right)\right\|_{H_t^{\frac{s-2}{3}}(\R)}\\
  \lesssim & \left\| \eta(t)\int^t_0  [W_{R}(t-t')](0,f(x,t'))dt'\right\|_{H_t^{\frac{s+1}{3}}(\R)},
\end{align*}
 and this reduces the proof to the case $j=0$. Similar arguments hold for $j=4,5$. The proof is now complete.
\end{proof}
\section{Bilinear estimate}
\setcounter{equation}{0}
In this section, we will handle the bilinear estimate. Thanks to the nonlinear structure of the SOBE, we can utilize more smoothing for the nonlinear term as needed comparing to its IVP.  As we mentioned in the previous section, we have relaxed some regularities in the estimate for the forcing term, and the bilinear estimate that we presented below still works for that condition!
\begin{thm}\label{bilinear}
For any $ s> -\frac34$, there exists $\si_0=\si_0(s)>\frac12$ such that for any $\si\in (\frac12, \si_0]$ and $ \a\in(0,1] $, one has the following bilinear estimate,
\be\label{bilin}
\l\|\l(\frac{\xi^2\widehat{uv}}{\phi(\xi)}\r)^\vee\r\|_{X_\a^{s+2,\si-1}}\leq C \|u\|_{X_\a^{s,\frac12,\si}}\|v\|_{X_\a^{s,\frac12,\si}}, \ee
where $C=C(s,\sigma)$.
\end{thm}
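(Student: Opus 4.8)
The plan is to reduce the estimate to a weighted $L^2$ bound via duality and then dispatch it by a case analysis on the relative sizes of the dispersive weights, exactly in the spirit of Tao's $[k;Z]$-multiplier method adapted to the SOBE. First I would use Lemma \ref{equ} to replace $\phi(\xi)=\sqrt{\xi^6+\a\b\xi^4+\a^2\xi^2}$ by the equivalent symbol $|\xi|^3+\frac{\a\b}{2}|\xi|$ inside all the brackets $\langle|\tau|-\phi(\xi)\rangle$, so that the weights become the standard KdV-type weights with a harmless lower-order correction; I would also note the factor $\xi^2/\phi(\xi)\sim\langle\xi\rangle^{-1}$ for $|\xi|\gtrsim1$ and is bounded for $|\xi|\lesssim1$, so the net effect on the left side is a gain of $\langle\xi\rangle^{-1}$ in the worst case (the low-frequency piece of the output being absorbed into $\Lambda_\si$, which is exactly why $X^{s,\frac12,\si}$ carries the $\Lambda_\si$ component). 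After writing $\widehat{uv}(\xi,\tau)=\int \widehat u(\xi_1,\tau_1)\widehat v(\xi_2,\tau_2)$ with $\xi=\xi_1+\xi_2$, $\tau=\tau_1+\tau_2$, and substituting $f(\xi,\tau)=\langle\xi\rangle^s\langle|\tau|-\phi(\xi)\rangle^{1/2}\widehat u$, $g=\langle\xi\rangle^s\langle|\tau|-\phi(\xi)\rangle^{1/2}\widehat v$, and a dual function $h\in L^2$ for the output, the estimate becomes
\[
\int\int \frac{\langle\xi\rangle^{s+1}\,h(\xi,\tau)\,f(\xi_1,\tau_1)\,g(\xi_2,\tau_2)}{\langle\xi_1\rangle^{s}\langle\xi_2\rangle^{s}\langle|\tau|-\phi(\xi)\rangle^{1-\si}\langle|\tau_1|-\phi(\xi_1)\rangle^{1/2}\langle|\tau_2|-\phi(\xi_2)\rangle^{1/2}}\lesssim \|h\|_{L^2}\|f\|_{L^2}\|g\|_{L^2},
\]
plus a separate, easier low-output-frequency piece where the left-hand norm is the $\Lambda_\si$ norm; by Cauchy–Schwarz it suffices to bound, for fixed $(\xi,\tau)$ in the region $|\xi|\gtrsim1$, the integral of the square of the multiplier over $(\xi_1,\tau_1)$, and symmetrically.

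The core is the resonance identity. With the corrected symbols, set $\Theta=\pm\tau-(\xi^3+\tfrac{\a\b}{2}\xi)$ and similarly $\Theta_1,\Theta_2$ for the inputs, with the three signs independent because the operator $W_R$ produces both $e^{\pm it\phi}$ branches. In the genuinely resonant configuration (all three signs ``$+$'', say) one computes $\Theta-\Theta_1-\Theta_2 = 3\xi\xi_1\xi_2 + \tfrac{\a\b}{2}(\xi-\xi_1-\xi_2)=3\xi\xi_1\xi_2$ since $\xi=\xi_1+\xi_2$; hence $\max(\langle\Theta\rangle,\langle\Theta_1\rangle,\langle\Theta_2\rangle)\gtrsim \langle\xi\xi_1\xi_2\rangle$, which is the familiar cubic gain that drives the $-3/4$ threshold for KdV. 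For the mixed-sign configurations one instead gets identities like $\tau+\tau_1-\tau_2 = $ (algebraic function of $\xi,\xi_1,\xi_2$) with leading term cubic in the frequencies, so the same type of lower bound on the largest modulation holds; a few of these mixed cases in fact enjoy even better lower bounds (non-resonance), so they are strictly easier. I would handle each sign pattern and each of the three sub-cases ``$\langle\Theta\rangle$ largest'', ``$\langle\Theta_1\rangle$ largest'', ``$\langle\Theta_2\rangle$ largest'' by placing the dispersive gain on the largest bracket, then performing the remaining $d\tau_1$ integral with Lemma \ref{lem1} and the remaining $d\xi_1$ integral with Lemma \ref{lem2} (using \eqref{bdd int for cubic}, since $\Theta-\Theta_1-\Theta_2$ is a cubic in $\xi_1$ with leading coefficient $3$) or Lemma \ref{lem3} when a sharper two-sided bound is needed. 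The algebra of counting powers of $\langle\xi\rangle,\langle\xi_1\rangle,\langle\xi_2\rangle$ then reduces, after splitting into the frequency regions $|\xi_1|\sim|\xi_2|\gg|\xi|$, $|\xi|\sim|\xi_1|\gg|\xi_2|$ (and symmetric), and $|\xi|\sim|\xi_1|\sim|\xi_2|$, to checking a finite set of numerical inequalities in $s$ and $\si$; these hold with room to spare when $s>-3/4$, and one then simply chooses $\si_0(s)>\tfrac12$ small enough that the $1-\si$ exponent on the output bracket and the two $\tfrac12$ exponents on the input brackets still leave enough decay — concretely $\si_0$ is taken so that $1-\si$ exceeds the loss forced by the worst region, which is possible precisely because $s>-3/4$ is a strict inequality.

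The main obstacle I anticipate is the low-frequency output region $|\xi|\lesssim1$, where the factor $\xi^2/\phi(\xi)$ no longer gains a derivative and, worse, $s+2>0$ means one cannot simply discard $\langle\xi\rangle^{s+2}\sim1$. This is exactly the region covered by the $\Lambda_\si$ summand in the $X^{s,\frac12,\si}$ norm: here I would bound $\|\chi_{|\xi|\le1}\langle\tau\rangle^\si\widehat{uv}\|_{L^2}$ directly, noting that on $|\xi|\le1$ one has $\langle|\tau|-\phi(\xi)\rangle\sim\langle\tau\rangle$ for $|\tau|$ large and that $\xi=\xi_1+\xi_2$ with $|\xi|\le1$ forces $|\xi_1|,|\xi_2|$ comparable (or one of them $\le2$); the bilinear convolution is then controlled by an $L^4_{t,x}$–$L^4_{t,x}$ Strichartz/Sobolev argument for the $X^{s,1/2}$ pieces together with the extra temporal smoothing built into $\Lambda_\si$ on the inputs, and the slight mismatch between $\si$ and $\tfrac12$ is absorbed because $\si\le\si_0$ is close to $\tfrac12$. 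A secondary technical nuisance is keeping track of the three independent $\pm$ signs coming from the two dispersion branches of each factor and of the output; I would organize this once at the start by recording all the resulting resonance identities, observe that the purely ``resonant'' sign choice is the only genuinely hard one, and then run the case analysis above only for that choice, remarking that the others follow a fortiori. Letting $\a$ range in $(0,1]$ costs nothing since every constant produced by Lemmas \ref{lem1}–\ref{lem3} is uniform in the lower-order coefficient $\tfrac{\a\b}{2}\in[-\tfrac12,\tfrac12]$.
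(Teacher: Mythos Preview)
Your overall architecture---duality to a trilinear $L^2$ estimate, the resonance identity giving $\max_j\langle L_j\rangle\gtrsim\langle\xi_1\xi_2\xi_3\rangle$, a case split by which modulation is largest, and then the integral Lemmas~\ref{lem1}--\ref{lem3}---is the same as the paper's, and is the right skeleton.

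There is, however, a concrete gap in how you use the $\Lambda_\si$ component. The left-hand side of \eqref{bilin} is the two-index space $X_\a^{s+2,\si-1}$; it carries \emph{no} $\Lambda_\si$ summand, so there is no ``separate low-output-frequency piece where the left-hand norm is the $\Lambda_\si$ norm'' to treat. The $\Lambda_\si$ lives only on the \emph{inputs}, and it is essential precisely in the low-\emph{input}-frequency region, not the low-output one. In the paper the substitutions are
\[
f_j(\xi_j,\tau_j)=\la\xi_j\ra^{s}\big(\la L_j\ra^{1/2}+\chi_{|\xi_j|\le 1}\la L_j\ra^{\si}\big)\,\widehat{u_j},
\]
so that the effective denominator is $M_j=\la L_j\ra^{1/2}+\chi_{|\xi_j|\le 1}\la L_j\ra^{\si}$. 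When $|\xi_1|\le 1$ (the paper's Cases~A1, A2, B1, B2) this upgrades the exponent on $\la L_1\ra$ from $\tfrac12$ to $\si>\tfrac12$, which is exactly what makes the $d\tau_1$ integral in Lemma~\ref{lem1} converge; with your definitions $f=\la\xi\ra^s\la L\ra^{1/2}\hat u$ the corresponding integral $\int \la L_1\ra^{-1}\la L_2\ra^{-1}\,d\tau_1$ diverges logarithmically and the estimate fails in that region. So the fix is not an $L^4$ Strichartz argument for low output frequency; it is to build the $\Lambda_\si$ gain into the input weights from the outset and then run the low-input-frequency cases as the paper does (see the reductions to \eqref{bi2} and \eqref{bi2-1}).

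A second, smaller issue: the claim that the mixed-sign configurations ``follow a fortiori'' is too optimistic. Because the symbol is $|\tau|-|\xi|^3-\tfrac{\a\b}{2}|\xi|$, both the $\tau$- and the $\xi$-signs enter the resonance function, and the identity $L_1+L_2+L_3=3\xi_1\xi_2\xi_3$ only holds in one sign region; in others (e.g.\ regions (b),(c) in the paper) one gets expressions like $\xi_2(2\xi_2^2+3\xi_1\xi_2+3\xi_1^2-1)$, and in a few degenerate sub-regions (e.g.\ the paper's Cases A3.3.2 and A5.1.2, where $|2\xi_2+\xi_1|\ll|\xi_2|$) the derivative $Q'$ loses the factor $|\xi_1\xi_2|$ and one must instead invoke Lemma~\ref{lem2} directly with a different power count. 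These cases are not harder in spirit, but they do not follow from the ``all-plus'' case and must be checked individually; the paper organises this as Cases~A/B $\times$ (a)/(b)/(c), with the explicit choice $\si_0=\tfrac{7}{12}-\tfrac{\rho}{9}$ (for $\rho=-s\in[\tfrac{9}{16},\tfrac34)$) making every numerical inequality close.
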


\begin{proof}
The idea of the proof is inherited from \cite{tao2,96-2,tao1,xin}.  By duality and the Plancherel's identity, it suffices to prove 
\be\label{bilin, duality}
\iint_{\m{R}^2} \frac{\xi^2\widehat{uv}}{\phi(\xi)}\,\wh{w}\,d\xi\,d\tau \leq C \|u\|_{X_\a^{s,\frac12,\si}}\|v\|_{X_\a^{s,\frac12,\si}}\|w\|_{X_\a^{-s-2,1-\si}},\ee
for any $u, v\in X_{\a}^{s,\frac12,\si}$ and $w\in X_{\a}^{-s-2,1-\si}$.   Next, we will reduce (\ref{bilin, duality}) to an estimate of some weighted convolution of $L^{2}$ functions.  

Denote $L=|\tau|-|\xi|^3-\frac{\a\b}{2}|\xi|$ and write
\be\label{f, bilin-l2 est}
\left\{\begin{array}{l}
f_1(\xi,\tau) =\la \xi\ra^s \l( \la L \ra ^{\frac12}+\chi_{|\xi|\leq 1}\la L \ra^{\si}\r) \hat{u}(\xi,\tau),  \vspace{0.05in}\\
 f_2(\xi,\tau) =\la \xi\ra^s \l( \la L \ra ^{\frac12}+\chi_{|\xi|\leq 1}\la L \ra^{\si}\r)  \hat{v}(\xi,\tau),  \vspace{0.05in}\\
f_3(-\xi,-\tau) =\la \xi\ra^{-s-2} \la L \ra^{1-\si} \hat{w}(\xi,\tau).  
\end{array}\right.\ee


\noindent For the convenience of notations, we denote $\rho=-s$, 
\be\label{int domain}
A:=\Big\{(\vec{\xi},\vec{\tau})\in\m{R}^{6}:\sum_{i=1}^{3}\xi_{i}=\sum_{i=1}^{3}\tau_{i}=0\Big\},\ee
where $\vec{\xi}=(\xi_{1},\xi_{2},\xi_{3})$ and $\vec{\tau}=(\tau_{1},\tau_{2},\tau_{3})$.  Then one can re-write (\ref{bilin, duality}) to
\be\label{bi1}
\int_A \frac{\xi_3^2\la\xi_1\ra^{\rho}\la\xi_2\ra^{\rho}|f_1(\xi_1,\tau_1)f_2(\xi_2,\tau_2)f_3(\xi_3,\tau_3)|}{\phi(\xi_3) \la\xi_3\ra^{\rho-2}  M_1 M_2\la L_3\ra^{1-\si}}\lesssim \prod^3_{j=1}\|f_j\|_{L^2_{\xi,\tau}},
\ee
where
\[M_1:= \la L_1 \ra ^{\frac12}+\chi_{|\xi_1|\leq 1}\la L_1 \ra^{\si},\quad  M_2:= \la L_2 \ra ^{\frac12}+\chi_{|\xi_2|\leq 1}\la L_2 \ra^{\si},\]
with
\begin{equation}\label{l1}
L_1:=|\tau_1|-|\xi_1|^3-\frac{\a\b}{2}|\xi_1|, \quad
 \quad L_2:=|\tau_2|-|\xi_2|^3-\frac{\a\b}{2}|\xi_2|,
\end{equation}
\begin{equation}\label{l2}
 L_3:=|\tau_3|-|\xi_3|^3-\frac{\a\b}{2}|\xi_3|,
\end{equation}
Since $\a\in (0,1]$ and the larger $ \a $ is, the more challenging the proof is, then we just need to consider the case when $\a=1$. In addition, we will choose $\beta=-1$ in the remainder of the proof since the case when $\b=1$ can be addressed similarly. Notice that $\la \xi_1\ra \la \xi_2\ra /\la \xi_3\ra \geq 1$, we thus  just need to address the case for $\rho$ is close to $\frac34$. Without loss of generality, we assume $\frac{9}{16} \leq \rho<\frac34$ and define $\si_0= \frac{7}{12}-\frac{\rho}{9}$ to help to justify the proof for $\frac12< \si\leq \si_0$.

To establish the estimate \eqref{bi1},  we will analyze the six possible cases for the sign of $\tau_1$, $\tau_2$ and $\tau_3$,
\begin{description}
\item[A] $\tau_1\geq 0$, $\tau_2\geq 0 $, $\tau_3\leq 0$;
\item[B] $\tau_1\geq 0$, $\tau_2\leq 0 $, $\tau_3\geq 0$;
\item[C] $\tau_1\geq 0$, $\tau_2\leq 0 $, $\tau_3\leq 0$;
\item[D] $\tau_1\leq 0$, $\tau_2\geq 0 $, $\tau_3\geq 0$;
\item[E] $\tau_1\leq 0$, $\tau_2\geq 0 $, $\tau_3\leq 0$;
\item[F] $\tau_1\leq 0$, $\tau_2\leq 0 $, $\tau_3\geq 0$.
\end{description}
We point out that cases $\tau_1,\tau_2,\tau_3\geq 0$ and $\tau_1,\tau_2,\tau_3\leq 0$ are impossible due to  $\tau_1+\tau_2+\tau_3=0$. For simplification, we notice that conditions $\mathbf{A}$, $\mb{B}$, $\mb{C}$ equivalent   $\mb{F}$, $\mb{E}$, $\mb{D}$ respectively, by considering $(\tau_1,\tau_2,\tau_3)\rightarrow -(\tau_1,\tau_2,\tau_3)$ and $L^2-$norms being invariant under reflections. Besides, because of the symmetric structural between $\tau_1$ and $\tau_2$ in \eqref{bi1}, cases $\mb{B}$ and $\mb{D}$ are also equivalence. Therefore, we will only need to justify the proof in cases $\mb{A}$ and $\mb{B}$. Further consideration will be needed for different conditions of $(\tau_1,\tau_2,\tau_3)$, we  split the space of $(\xi_1,\xi_2,\xi_3)\in\R^3$ as following:
\begin{description}
  \item[(a)] $\xi_1\geq 0$, $\xi_2\geq0$, $\xi_3\leq 0$;
  \item[(b)] $\xi_1\geq 0$, $\xi_2\leq0$, $\xi_3\leq 0$;
  \item[(c)] $\xi_1\leq 0$, $\xi_2\geq0$, $\xi_3\leq 0$;
  \item[(d)] $\xi_1\leq 0$, $\xi_2\leq0$, $\xi_3\geq 0$;
  \item[(e)] $\xi_1\leq 0$, $\xi_2\geq0$, $\xi_3\geq 0$;
  \item[(f)] $\xi_1\geq 0$, $\xi_2\leq0$, $\xi_3\geq 0$.
\end{description}
 It then suffices to consider   cases $\mb{(a)}$, $\mb{(b)}$ and $\mb{(c)}$. We first start to consider case  $\mb{A}$.  According to \eqref{l1} and \eqref{l2}, we can write
\[\ L_1= \tau_1-|\xi_1|^3+\frac12|\xi_1|; \ \ L_2= \tau_2-|\xi_2|^3+\frac12|\xi_2|;\ \  L_3:= \tau_3+|\xi_3|^3-\frac12|\xi_3|.\]
Here, we point out that $L_3$ should be in the form of $- \l(\tau_3+|\xi_3|^3-\frac12|\xi_3|\r)$ based on \eqref{l2}. Since the negative sign  in $\la L_3\ra $ does not influence  the computation,   for convenience of the later proof, we will use the representation for  $L_3$ as shown above.  Similar issue in notations  will be brought out in the discussion of Case $\mb{B}$.
\begin{itemize}
  \item{\bf Case A1.} $|\xi_1|\leq 1$.\\
  In this region, one has $\la \xi_1\ra \la \xi_2\ra /\la \xi_3\ra \lesssim 1$. Hence,
  \begin{align*}
    \mbox{LHS of \eqref{bi1}}
    \lesssim & \int_A \frac{|\xi_3|^2|f_1(\xi_1,\tau_1)f_2(\xi_2,\tau_2)f_3(\xi_3,\tau_3)|}{\phi(\xi_3)\la \xi_3 \ra^{-2} \la L_1\ra^{\si} \la L_2\ra^\frac12 \la L_3\ra^{1-\si}}
    \end{align*}

 Notice that $\phi(\xi)=|\xi|\sqrt{\xi^4+\xi^2+1}\sim |\xi|\la \xi\ra^2$. Thus, \eqref{bi1} reduces to
 \begin{equation}\label{bi2}
   \int_A \frac{|\xi_3|\prod^3_{j=1}|f_j(\xi_j,\tau_j)|}{ \la L_1\ra^{\si} \la L_2\ra^\frac12 \la L_3\ra^{1-\si}}\lesssim \prod^3_{j=1}\|f_j\|_{L^2_{\xi, \tau}}
 \end{equation}
  \item{\bf Case A1.1.} If $|\xi_2|\leq 2$, it follows that $|\xi_3|\leq 3$.

      Thus,
      \begin{align*}
        \mbox{LHS of \eqref{bi2}}\lesssim    & \iint \frac{|f_3|}{\la L_3\ra^{1-\si}} \l(\iint\frac{|f_1f_2|}{\la L_1\ra^{\si} \la L_2\ra^{\frac12}}d\xi_1 d\tau_1\r)d\xi_3 d\tau_3\\
  \lesssim & \iint \frac{|f_3|}{\la L_3\ra^{1-\si}} \l(\iint\frac{d\xi_1 d\tau_1}{\la L_1\ra^{2\si} \la L_2\ra}\r)^\frac12\l(\iint f_1^2f_2^2 d\xi_1 d\tau_1\r)^\frac12d\xi_3 d\tau_3.
      \end{align*}
  According to Lemma \ref{lem1} and $\tau_1+\tau_2+\tau_3=0$, we obtain that, for $|\xi_1|\leq 1$ and $\frac12<\si<1$,
  \[\sup_{\xi_3,\tau_3} \frac{1}{\la L_3\ra^{2-2\si}}\int \frac{d\xi_1}{\la L_1 +L_2\ra}\lesssim 1.\]
  Therefore, \eqref{bi2} is achieved through Cauchy-Schwartz inequality,
  \begin{align*}
     \mbox{LHS of \eqref{bi2}}\lesssim    &\iint |f_3| \l(\iint f_1^2f_2^2 d\xi_1 d\tau_1\r)^\frac12d\xi_3 d\tau_3\\
     \lesssim & \prod^3_{j=1}\|f_j\|_{L^2_{\xi_\tau}}.
  \end{align*}

  \item{\bf Case A1.2.} If $|\xi_2|>2$ and $\la L_2\ra\leq \la L_3\ra$, this implies that, for $\si>\frac12$,
      \[\la L_2\ra^{\frac12} \la L_3\ra^{1-\si}\geq \la L_2\ra^\si\la L_3\ra^{\frac32-2\si}.\]
  Thus,
  \[\mbox{LHS of \eqref{bi2}}\lesssim \int_A \frac{|\xi_3|\prod^3_{j=1}|f_j|}{\la L_1\ra^\si \la L_2\ra^\si\la L_3\ra^{\frac32-2\si}}.\]
  It then suffices to bound
 \begin{equation}\label{bi2-1}
  \sup_{\xi_3,\tau_3}\frac{|\xi_3|^2}{\la L_3\ra^{3-4\si}}\int\frac{d\xi_1}{\la L_1 +L_2\ra^{2\si}}.
 \end{equation}
 In addition, we point out that $|\xi_1|\leq 1$ and $|\xi_2|>2$ will lead to $|\xi_2|\sim |\xi_3|$ which will become useful in the later study.
  \begin{itemize}
    \item{\bf Case A1.2.1} If $(\xi_1,\xi_2,\xi_3)$ lies in $\mb{(a)}$, that is, $\xi_1\geq 0$, $\xi_2\geq0$, $\xi_3\leq 0$. Then,
        \[ L_1+L_2=  3\xi_3\xi_1^2 +3\xi_3^2\xi_1 -L_3, \quad \mbox{with}\quad   L_3= \tau_3-\xi_3^3+\frac12\xi_3.\]
       For $\si\leq \si_0=\frac{7}{12}-\frac{\rho}{9}<\frac58$,  it yields  $3-4\si>\frac12$. Then, according to Lemma \ref{lem3}, one has
        \begin{align*}
          \mbox{\eqref{bi2-1}} \lesssim  \frac{|\xi_3|^2}{\la L_3\ra^{3-4\si}} \frac{1}{|\xi_3|^{\frac12}} \la L_3-\frac34\xi_3^3\ra^{-\frac12}\lesssim \frac{|\xi_3|^\frac32}{\la L_3\ra^{3-4\si}\la L_3-\frac34\xi_3^3\ra^{\frac12}}\lesssim 1,
        \end{align*}
        since $\la L_3\ra^{3-4\si}\la L_3-\xi_3^3\ra^{\frac12}\gtrsim \la L_3\ra^\frac12 \la L_3-\xi_3^3\ra^{\frac12}\gtrsim \la \xi_3^3\ra^\frac12$. 

    \item{\bf Case A1.2.2} If $(\xi_1,\xi_2,\xi_3)$ lies in $\mb{(b)}$, that is, $\xi_1\geq 0$, $\xi_2\leq0$, $\xi_3\leq 0$. Then,
        \[ L_1= \tau_1-\xi_1^3+\frac12\xi_1,\quad L_2= \tau_2+\xi_2^3-\frac12\xi_2  ,\quad L_3= \tau_3-\xi_3^3+\frac12\xi_3 \] and
          \[ L_1+L_2+L_3=\xi_2(3\xi_1^2+3\xi_1\xi_2+2\xi_2^2-1).\]
          Hence, \[|L_1+L_2+L_3|\gtrsim |\xi_2||\xi_1^2+\xi_2^2|\sim |\xi_2|^3\sim|\xi_{3}|^3,\]
          since $|\xi_1|\leq 1$, $|\xi_2|>2$ and $|\xi_2|\sim|\xi_3|$.
          In addition, we notice that $\si<\frac{7}{12}-\frac{\rho}{9}<\frac{7}{12}$  leads to $3-4\si>\frac23$, thus,
          \[\la L_3\ra^{3-4\si}\la L_1+L_2\ra^{2\si}\gtrsim \la L_1+L_2+L_3\ra^{\frac23}\sim \la \xi_3\ra^2.\]
          
        Therefore,   one has
        \[\mbox{\eqref{bi2-1}} \lesssim \frac{|\xi_3|^2}{\la \xi_3 \ra^2}   \int_{|\xi_1|\leq 1} d\xi_1\lesssim 1,\]
        
    \item{\bf Case A1.2.3} If $(\xi_1,\xi_2,\xi_3)$ lies in $\mb{(c)}$, that is, $\xi_1\leq 0$, $\xi_2\geq0$, $\xi_3\leq 0$. Then,
        \[ L_1= \tau_1+\xi_1^3-\frac12\xi_1,\quad L_2= \tau_2-\xi_2^3+\frac12\xi_2  ,\quad L_3= \tau_3-\xi_3^3+\frac12\xi_3 \] and
       \[ L_1+L_2+L_3=\xi_1(2\xi_1^2+3\xi_1\xi_2+3\xi_2^2-1):=P(\xi_1),\]
       with
       \[P'(\xi_1)=6\xi_1^2+6\xi_1\xi_2+3\xi_2^2-1.\]
       We notice  that, for $|\xi_1|\leq 1$, $|\xi_2|>2$, one has $$|P'(\xi_1)|\gtrsim |\xi_2|^2.$$ 
  Therefore, we obtain that 
        \[\mbox{\eqref{bi2-1}} \lesssim \frac{|\xi_3|^2}{\la L_3\ra^{3-4\si}}\int\frac{1}{|P'(\xi_1)|}\frac{|P'(\xi_1)|}{\la P(\xi_{1})\ra^{2\si}}d\xi_1\lesssim \int\frac{|P'(\xi_1)|}{\la P(\xi_{1})\ra^{2\si}}d\xi_1\lesssim 1,\]
        since $|\xi_2|\sim |\xi_3|$ and $\si>\frac12$.
  \end{itemize}

  \item{\bf Case A1.3.} If $|\xi_2|>2$, $\la L_2\ra> \la L_3\ra$. Similar to {\bf Case A1.2}, one still has
      \[|\xi_2|\sim |\xi_3|\gtrsim|\xi_1|.\]
  In addition, for $\si>\frac12$,
     \[\la L_2\ra^{\frac12} \la L_3\ra^{1-\si}\geq \la L_2\ra^{\frac32-2\si}\la L_3\ra^\si.\]
     Thus,
     \[\mbox{\eqref{bi2-1}}\lesssim \iint\frac{|\xi_2||f_2|}{\la L_2\ra^{\frac32-2\si}} \l(\iint\frac{|f_1f_3|}{\la L_1\ra^\si \la L_3\ra^\si}d\xi_1d\tau_1\r)d\xi_2d\tau_2,\]
     which reduces the proof similar to \textbf{Case A1.2}.
   \item{\bf Case A2.} $|\xi_2|\leq 1$. This case can be addressed same as \textbf{Case A1}.

   \item{\bf Case A3.} $|\xi_1|, |\xi_2|> 1$ and $\la L_1\ra=\mbox{MAX}:=\max{\{\la L_1\ra, \la L_2\ra ,\la L_3\ra\}}$.

       This follows that
       \[\la L_1\ra^\frac12 \la L_2\ra^\frac12 \la L_3\ra^{1-\si}\geq \la L_1\ra^{2-3\si} \la L_2\ra^\si \la L_3\ra^{\si}.\]
Hence, it leads to,
\begin{align*}
\mbox{LHS of \eqref{bi1}} \lesssim & \int_A \frac{|\xi_3|\la \xi_1\ra^\rho \la \xi_2\ra^\rho |f_1f_2f_3|}{\la \xi_3\ra^{\rho}\la L_1\ra^{2-3\si} \la L_2\ra^\si \la L_3\ra^{\si}}\\
\lesssim & \iint \frac{\la \xi_1\ra^\rho |f_1|}{\la L_1\ra^{2-3\si}}\iint \frac{|\xi_3|\la \xi_2\ra^\rho |f_2f_3|d\xi_2d\tau_2}{\la \xi_3\ra^{\rho}\la L_2\ra^\si \la L_3\ra^{\si}}d\xi_1d\tau_1.
\end{align*}
It then suffices to bound the term
\begin{equation}\label{L11}
\sup_{\xi_1,\tau_1}\frac{\la \xi_1\ra^{2\rho}}{\la L_1\ra^{4-6\si}}\int\frac{|\xi_3|^2\la \xi_2\ra^{2\rho}}{\la \xi_3\ra^{2\rho}\la L_2+L_3\ra^{2\si}}  d\xi_2,
\end{equation}
with considering $(\xi_1,\tau_1)$ fixed in the integral \eqref{L11} and
\begin{align*}
L_2+L_3 =& \tau_2-|\xi_2|^3+\frac12|\xi_2|+\tau_3+|\xi_3|^3-\frac12|\xi_3|\nonumber\\
=& -|\xi_2|^3+\frac12|\xi_2|+|\xi_3|^3-\frac12|\xi_3|-\tau_1.
\end{align*}
We establish the estimate \eqref{L11} in following cases.
\end{itemize}
\begin{itemize}
  \item{\bf Case A3.1.} If $(\xi_1,\xi_2,\xi_3)$ lies in $\mb{(a)}$, that is, $\xi_1\geq 0$, $\xi_2\geq0$, $\xi_3\leq 0$.

  In this case, we  have,
\begin{align*}
  Q_1(\xi_2):=&L_2+L_3=-\xi_1^3+\frac12\xi_1-\xi_3^3+\frac12\xi_3-\tau_1\nonumber\\
  =& -\xi_1^3+\frac12\xi_1+(\xi_1+\xi_2)^3-\frac12(\xi_1+\xi_2)-\tau_1\nonumber\\
  =& 3\xi_1\xi_2^2+3\xi_1^2\xi_2-\tau_1+\xi_1^3-\frac12\xi_1,
\end{align*}
and
\be\label{Q-1}
Q_1'(\xi_2)=3\xi_1(2\xi_2+\xi_1), 
\ee
where
$|Q_1'(\xi_2)|\gtrsim |\xi_1\xi_2|$ since $\xi_1,\xi_2>1$.
In addition, one also has
\[|L_1+L_2+L_3|= |3\xi_1\xi_2(\xi_1+\xi_2)|\gtrsim |\xi_1\xi_2\xi_3|,\]
which implies that $\la L_1 \ra \gtrsim \la L_1+L_2+L_3\ra \gtrsim \la\xi_1\xi_2\xi_3 \ra$.
Thus, the term \eqref{L11} is bounded by
\begin{equation}\label{L11-1}
\sup_{\xi_1,\tau_1}\frac{\la \xi_1\ra^{2\rho}}{\la L_1\ra^{4-6\si}}\int\frac{|\xi_3|^2\la \xi_2\ra^{2\rho}}{| \xi_3|^{2\rho} |\xi_1\xi_2|} \frac{|Q_1'(\xi_2)|}{\la Q_1(\xi_2)\ra^{2\si}}  d\xi_2.
\end{equation}
We notice that, for $\rho\leq \frac34$, and $|\xi_1|, |\xi_2|>1$,
$$  \la \xi_1\ra^{2\rho-1}\la \xi_2\ra^{2\rho-1}| \xi_3|^{-2\rho+2}\lesssim |\xi_1\xi_2\xi_3|^{2-2\rho}\lesssim \la \xi_1\xi_2\xi_3\ra^{2-2\rho}\lesssim \la L_1 \ra^{2-2\rho}.$$
This leads to the term \eqref{L11-1} is bounded by
\[ \sup_{\xi_1,\tau_1}\frac{\la L_1 \ra^{2-2\rho}}{\la L_1\ra^{4-6\si}}\int \frac{|Q_1'(\xi_2)|}{\la Q_1(\xi_2)\ra^{2\si}}  d\xi_2\lesssim \sup_{\xi_1,\tau_1}\frac{\la L_1 \ra^{2-2\rho}}{\la L_1\ra^{4-6\si}},\]
which implies that \eqref{L11} is finite since $2-2\rho\leq 4-6\si$ with $\si\leq \si_0=\frac{7}{12}-\frac{\rho}{9}$ and $\rho\geq \frac{9}{16}$.

  \item{\bf Case A3.2.} If $(\xi_1,\xi_2,\xi_3)$ lies in $\mb{(b)}$, that is, $\xi_1\geq 0$, $\xi_2\leq0$, $\xi_3\leq 0$.

      In this case, we  have
\begin{equation*}
  Q_2(\xi_2):=L_2+L_3=2\xi_2^3+3\xi_1\xi_2^2+(3\xi_1^2-\frac12)\xi_2-\tau_1+\xi_1^3-\frac12\xi_1,
\end{equation*}
and
\be\label{Q-2}
Q_2'(\xi_2)=6\xi_2^2+6\xi_1\xi_2+3\xi_1^2-\frac12.
\ee
We notice that $|Q_2'(\xi_2)|\gtrsim \xi_1^2+\xi_2^2\gtrsim|\xi_1\xi_2|$ since $ |\xi_2|>1 $.
In addition, one also has
\[|L_1+L_2+L_3|= |\xi_2 (2\xi_2^2+3\xi_1\xi_2+3\xi_1^2-1)|\gtrsim |\xi_1\xi_2\xi_3|,\]
since $|\xi_1|, |\xi_2|>1$. It follows that $\la L_1 \ra \gtrsim \la \xi_1\xi_2\xi_3 \ra$. Hence, the remainder proof is similar to  the line in \textbf{Case A3.1}.

  \item{\bf Case A3.3.} If $(\xi_1,\xi_2,\xi_3)$ lies in $\mb{(c)}$, that is, $\xi_1\leq 0$, $\xi_2\geq0$, $\xi_3\leq 0$.

      In this case, we  have
\begin{equation*}
  Q_3(\xi_2):=L_2+L_3=3\xi_1\xi_2^2+3\xi_1^2\xi_2-\tau_1+\xi_1^3-\frac12\xi_1,
\end{equation*}
and
\be\label{Q-3}
Q_3'(\xi_2)=3\xi_1(2\xi_2+\xi_1).
\ee
In addition, one also has
\[|L_1+L_2+L_3|= |\xi_1(2\xi_2^2+3\xi_1\xi_2+3\xi_1^2-1)|\gtrsim |\xi_1\xi_2\xi_3|,\]
since $|\xi_1|, |\xi_2|>1$. It follows that $\la L_1 \ra \gtrsim \la \xi_1\xi_2\xi_3 \ra$. %
\begin{itemize}
  \item{\bf Case A3.3.1.} If $|2\xi_2+\xi_1|\geq \frac{1}{10}|\xi_2|$, then
  $|Q_3'(\xi_2)|\gtrsim |\xi_1\xi_2|$. Thus,  the rest of the proof is similar to A3.1.

  \item{\bf Case A3.3.2.} If $|2\xi_2+\xi_1|\leq \frac{1}{10}|\xi_2|$, it implies that $$|\xi_1|\sim |\xi_2|\sim |\xi_3|.$$
      Then, \eqref{L11} is bounded by
      \begin{align}
        \sup_{\xi_1,\tau_1}\frac{\la \xi_1\ra^{\frac12}}{\la L_1\ra^{4-6\si}}\int\frac{|\xi_2|^2\la \xi_1\ra^{2\rho-\frac12}\la \xi_2\ra^{2\rho}}{| \xi_3|^{2\rho}}\frac{1}{\la Q_3(\xi_2)\ra^{2\si}}  d\xi_2\label{L11-2}.
      \end{align}
      We notice that
      \[| \xi_1|^{2\rho-\frac12}|\xi_2|^{2\rho}| \xi_3|^{-2\rho+2}\sim |\xi_1|^{2\rho+\frac32}\lesssim | \xi_1\xi_2\xi_3|^{\frac{2\rho}{3}+\frac{1}{2}}\lesssim \la L_1\ra^{\frac{2\rho}{3}+\frac{1}{2}}.\]
      In addition, according to Lemma \ref{lem2}, one has
      \[\int\frac{1}{\la Q_3(\xi_2)\ra^{2\si}}d\xi_2 \lesssim |\xi_1|^{-\frac12}.\]
      Hence, the term \eqref{L11-2} is bounded by
      \[\sup_{\xi_1,\tau_1}\frac{\la L_1\ra^{\frac{2\rho}{3}+\frac{1}{2}}}{\la L_1\ra^{4-6\si}},\]
      which leads to the boundedness of  \eqref{L11}  since $\frac{2\rho}{3}+\frac{1}{2} \leq 4-6\si$.
\end{itemize}
\vskip\baselineskip
\item{\bf Case A4.} $|\xi_1|, |\xi_2|>1$ and $\la L_2\ra=\mbox{MAX}$. Due to the symmetric structure between $\xi_1$ and $\xi_2$ as well as their representation of $L_1$ and $L_2$. The proof is same as \textbf{Case A3}.
     \vskip\baselineskip
\item{\bf Case A5.} $|\xi_1|, |\xi_2|> 1$ and $\la L_3\ra=\mbox{MAX}$.

    This follows that
       \[\la L_1\ra^\frac12 \la L_2\ra^\frac12 \la L_3\ra^{1-\si}\geq \la L_3\ra^{2-3\si} \la L_1\ra^\si \la L_2\ra^{\si}.\]
Hence, it leads to,
\begin{align*}
\mbox{LHS of \eqref{bi1}}
\lesssim & \iint \frac{ |\xi_3||f_3|}{\la \xi_3\ra^{\rho}\la L_3\ra^{2-3\si}}\iint \frac{\la \xi_1\ra^\rho\la \xi_2\ra^\rho |f_1f_2|d\xi_1d\tau_1}{ \la L_1\ra^\si \la L_2\ra^{\si}}d\xi_3d\tau_3.
\end{align*}
In order to establish \eqref{bi1},  it then suffices to bound the term
\begin{equation}\label{L13}
\sup_{\xi_3,\tau_3}\frac{|\xi_3|^2}{\la \xi_3\ra^{2\rho}\la L_3\ra^{4-6\si}}\int\frac{\la \xi_1\ra^{2\rho}\la \xi_2\ra^{2\rho}}{ \la L_1+L_2\ra^{2\si}}  d\xi_1,
\end{equation}
with considering $(\xi_3,\tau_3)$ fixed in the integral \eqref{L13} and
\begin{align*}
L_1+L_2=&\tau_1-|\xi_1|^3+\frac12|\xi_1|+\tau_2-|\xi_2|^3+\frac12|\xi_2|\nonumber\\
=& -|\xi_1|^3+\frac12|\xi_1|-|\xi_2|^3+\frac12|\xi_2|-\tau_3.
\end{align*}
We then consider to bound the term \eqref{L13} in following cases.
\end{itemize}
\begin{itemize}
  \item{\bf Case A5.1.} If $(\xi_1,\xi_2,\xi_3)$ lies in $\mb{(a)}$, that is, $\xi_1\geq 0$, $\xi_2\geq0$, $\xi_3\leq 0$.

      In this case, we  have
\begin{align*}
  Q_4(\xi_1):=&L_1+L_2=-\xi_1^3+\frac12\xi_1-\xi_2^3+\frac12\xi_2-\tau_3\nonumber\\
  =& 3\xi_3\xi_1^2+3\xi_3^2\xi_1-\tau_3+\xi_3^3-\frac12\xi_3,
\end{align*}
and
\be\label{Q-4}
Q_4'(\xi_1)=3\xi_3(2\xi_1+\xi_3).
\ee
In addition, one also has
\[|L_1+L_2+L_3|= |3\xi_1\xi_2(\xi_1+\xi_2)|\gtrsim|\xi_1\xi_2\xi_3|,\]
which follows that $\la L_3 \ra \gtrsim \la \xi_1\xi_2\xi_3 \ra$.
\begin{itemize}
  \item{\bf Case A5.1.1.} If $|\xi_1|\leq \frac{1}{10}|\xi_3|$, then $|\xi_1| \lesssim |\xi_3|\sim |\xi_2|$ and
  $|Q_4'(\xi_1)|\gtrsim |\xi_3|^2$. Thus, the term \eqref{L13} is bounded by
       \begin{equation}\label{L13-1}
        \sup_{\xi_3,\tau_3}\frac{|\xi_3|^2}{\la \xi_3\ra^{2\rho}\la L_3\ra^{4-6\si}}\int\frac{\la \xi_1\ra^{2\rho}\la \xi_2\ra^{2\rho}}{|\xi_3|^2}\frac{|Q'_4(\xi_1)|}{\la Q_4(\xi_1)\ra^{2\si}}  d\xi_1.
      \end{equation}
       We notice that, for $\rho>0$,
 $$  | \xi_1|^{2\rho}| \xi_2|^{2\rho}| \xi_3|^{-2\rho} \lesssim | \xi_1|^{2\rho}\lesssim | \xi_1\xi_2\xi_3|^{\frac{2\rho}{3}}\lesssim \la L_3 \ra^{\frac{2\rho}{3}}.$$
 Thus, the term \eqref{L13-1} is bounded by
 \[\sup_{\xi_3,\tau_3}\frac{ \la L_3 \ra^{\frac{2\rho}{3}}}{\la L_3\ra^{4-6\si}},\]
 which  is finite since $\frac{2\rho}{3}\leq 4-6\si$.
  \item{\bf Case A5.1.2.} If $ \frac{1}{10}|\xi_3|\leq |\xi_1|\leq 10|\xi_3|$, then $|\xi_2| \lesssim |\xi_1|\sim |\xi_3|$. Thus, the term \eqref{L13} is bounded by
       \begin{equation}\label{L13-2}
        \sup_{\xi_3,\tau_3}\frac{| \xi_3|^{\frac12}}{\la L_3\ra^{4-6\si}}\int\frac{| \xi_3|^{\frac32}\la \xi_1\ra^{2\rho}\la \xi_2\ra^{2\rho}}{| \xi_3|^{2\rho}}\frac{1}{\la Q_4(\xi_1)\ra^{2\si}}  d\xi_1.
      \end{equation}
      We notice that, for $\rho>0$,
 $$  \la \xi_1\ra^{2\rho}\la \xi_2\ra^{2\rho}\la \xi_3\ra^{-2\rho+\frac32}  \lesssim | \xi_1\xi_2\xi_3|^{\frac{2\rho}{3}+\frac12}\lesssim \la L_3 \ra^{\frac{2\rho}{3}+\frac12},$$
 since $2\rho \geq \frac23 \rho+\frac12$.
   In addition, according to Lemma \ref{lem2}, one has
 \[\int\frac{1}{\la Q_4(\xi_1)\ra^{2\si}}d\xi_1 \lesssim |\xi_3|^{-\frac12}.\]
  Thus, the term \eqref{L13-2} is bounded for $\frac{2\rho}{3}+\frac12\leq 4-6\si$.
  \item{\bf Case A5.1.3.}  If $  |\xi_1|\geq 10|\xi_3|$, then $|\xi_3| \lesssim |\xi_1|\sim |\xi_2|$ and $|Q_4'(\xi_1)|\gtrsim |\xi_1\xi_3|$.
      Thus, the term \eqref{L13} is bounded by
           \begin{equation}\label{L13-3}
        \sup_{\xi_3,\tau_3}\frac{|\xi_3|^2}{\la \xi_3\ra^{2\rho}\la L_3\ra^{4-6\si}}\int\frac{\la \xi_1\ra^{2\rho}\la \xi_2\ra^{2\rho}}{|\xi_1\xi_3|}\frac{|Q'_4(\xi_1)|}{\la Q_4(\xi_1)\ra^{2\si}}  d\xi_1.
      \end{equation}
            We notice that, for $\frac{9}{16}<\rho<\frac34$,  one has
            \[|\xi_3| \lesssim |\xi_3|^{2\rho-\frac12}\la \xi_3\ra^{2\rho}.\]
            Hence, for $|\xi_1|$, $|\xi_2|>1$, it yields,
        \[\la \xi_1\ra^{2\rho-1}\la \xi_2\ra^{2\rho}\la \xi_3\ra^{-2\rho}|\xi_3|\lesssim |\xi_1 \xi_2 \xi_3|^{2\rho-\frac12}\lesssim \la L_3\ra^{2\rho-\frac12}.\]
        Therefore, \eqref{L13-3} is finite if $2\rho-\frac12\leq 4-6\si$.
 \end{itemize}

  \item{\bf Case A5.2.} If $(\xi_1,\xi_2,\xi_3)$ lies in $\mb{(b)}$, that is, $\xi_1\geq 0$, $\xi_2\leq0$, $\xi_3\leq 0$.

      In this case, we  have
\begin{align*}
  Q_5(\xi_1):=&L_1+L_2=-\xi_1^3+\frac12\xi_1+\xi_2^3-\frac12\xi_2-\tau_3\nonumber\\
  =& -2\xi_1^3-3\xi_3\xi_1^3-(3\xi_3^2-1)\xi_1-\tau_3-\xi_3^3+\frac12\xi_3,
\end{align*}
and
\be\label{Q-5}
Q_5'(\xi_1)=-6\xi_1^2-6\xi_1\xi_3-3\xi_3^2+1.
\ee
We notice that $|Q_5'(\xi_1)|\gtrsim |\xi_1|^2+|\xi_1+\xi_3|^2\gtrsim |\xi_1\xi_2|$ since  $|\xi_1| >1$.
In addition, one also has
\begin{align*}
|L_1+L_2+L_3|=& |-(\xi_1+\xi_3)(2\xi_1^2+\xi_1\xi_3+2\xi_3^2-1)|\\
=& |\xi_2(2\xi_1^2+\xi_1\xi_3+2\xi_3^2-1)|\gtrsim |\xi_1\xi_2\xi_3|,
\end{align*}
since $|\xi_1| >1$. It  follows that $\la L_3 \ra \gtrsim \la \xi_1\xi_2\xi_3 \ra$. Then, the remaining proof is similar to the \textbf{Case A3.1}.

  \item{\bf Case A5.3.} If $(\xi_1,\xi_2,\xi_3)$ lies in $\mb{(c)}$, that is, $\xi_1\leq 0$, $\xi_2\geq0$, $\xi_3\leq 0$.

      In this case, we  have
\begin{align*}
  Q_6(\xi_1):=&L_1+L_2=\xi_1^3-\frac12\xi_1-\xi_2^3+\frac12\xi_2-\tau_3\nonumber\\
  =& 2\xi_1^3+3\xi_3\xi_1^2+(3\xi_3^2-1)\xi_1-\tau_3+\xi_3^3-\frac12\xi_3,
\end{align*}
and
\be\label{Q-6}
Q_6'(\xi_1)=6\xi_1^2+6\xi_1\xi_3+3\xi_3^2-1.
\ee
We notice that $|Q_6'(\xi_1)|\gtrsim |\xi_1|^2+|\xi_1+\xi_3|^2\gtrsim |\xi_1\xi_2|$ since  $|\xi_1| >1$.
In addition, one also has
\begin{align*}
|L_1+L_2+L_3|=& |2\xi_1^3+3\xi_1^2\xi_3+3\xi_1\xi_3^2-\xi_1|\\
=& |\xi_1(2\xi_1^2+\xi_1\xi_3+2\xi_3^2-1)|\gtrsim |\xi_1\xi_2\xi_3|.
\end{align*}
which follows that $\la L_3 \ra \gtrsim \la \xi_1\xi_2\xi_3 \ra$ since $|\xi_1| >1$. Then, the remaining proof is similar to the \textbf{Case 3.1}.
\end{itemize}
\vskip\baselineskip
Next, we move on to consider the case that $(\tau_1,\tau_2,\tau_3)$ lies in region \textbf{B}, and  write\footnote{As we mentioned earlier, we neglect the ``$-$" in the  representation of $L_2$.}
\[L_1=\tau_1-|\xi_1|^3+\frac12|\xi_1|,\quad L_2=\tau_2+|\xi_2|^3-\frac12|\xi_2|,\quad L_3=\tau_3-|\xi_3|^3+\frac12|\xi_3|.\]
\begin{itemize}
  \item{\bf Case B1.} $|\xi_1|\leq 1$. Similar to the discussion in \textbf{Case A1}, it suffices to establish
      \be\label{bi3}
         \int_A \frac{|\xi_3|\prod^3_{j=1}|f_j(\xi_j,\tau_j)|}{ \la L_1\ra^{\si} \la L_2\ra^\frac12 \la  L_3\ra^{1-\si}}\lesssim \prod^3_{j=1}\|f_j\|_{L^2_{\xi_\tau}}.
      \ee
  \item{\bf Case B1.1} If $|\xi_2|\leq 2$. The proof of this case is quite similar to \textbf{Case A1.1}, therefore omitted.
   \item{\bf Case B1.2} If $|\xi_2|>2$,  $\la L_2\ra \leq \la L_3\ra$. This follows that, for $\si>\frac12$.
        \[\la L_2\ra^{\frac12} \la L_3\ra^{1-\si}\geq \la L_2\ra^\si\la L_3\ra^{\frac32-2\si}.\]
        Thus, similar to \textbf{Case A1.2}, one has $|\xi_2|\sim|\xi_3|$ and it suffices to bound
     \begin{equation}\label{bi4}
        \sup_{\xi_3,\tau_3} \frac{|\xi_3|^2}{\la L_3 \ra^{3-4\si}}\int \frac{d\xi_1}{\la L_1+L_2\ra^{2\si}}.
     \end{equation}

%
%

        \begin{itemize}

    \item{\bf Case B1.2.1} If $(\xi_1,\xi_2,\xi_3)$ lies in $\mb{(a)}$, that is, $\xi_1\geq 0$, $\xi_2\geq0$, $\xi_3\leq 0$. Then,
     \[ L_1= \tau_1-\xi_1^3+\frac12\xi_1,\quad L_2= \tau_2+\xi_2^3-\frac12\xi_2  ,\quad L_3= \tau_3+\xi_3^3-\frac12\xi_3 \] and
    \[ L_1+L_2+L_3=-\xi_1(2\xi_1^2+3\xi_1\xi_2+3\xi_2^2-1):=P_2(\xi_1)\]
     with
    \[P_2'(\xi_1)=-6\xi_1^2-6\xi_1\xi_2-3\xi_2^2+1.\]
     We notice  that $$|P_2'(\xi_1)|\gtrsim |\xi_2|^2,$$ 
    since $|\xi_1|\leq 1$, $|\xi_2|>2$. Therefore,  similar to \textbf{Case A1.2.3}, one has
    \[\mbox{\eqref{bi4}} \lesssim \frac{|\xi_3|^2}{\la L_3\ra^{3-4\si}}\int\frac{1}{|P'_2(\xi_1)|}\frac{|P'_2(\xi_1)|}{\la P_2(\xi_{1})\ra^{2\si}}d\xi_1 \lesssim 1.\]
   
    \item{\bf Case B1.2.2} If $(\xi_1,\xi_2,\xi_3)$ lies in $\mb{(b)}$, that is, $\xi_1\geq 0$, $\xi_2\leq0$, $\xi_3\leq 0$.
        Then,
        \[ L_1= \tau_1-\xi_1^3+\frac12\xi_1,\quad L_2= \tau_2-\xi_2^3+\frac12\xi_2  ,\quad L_3= \tau_3+\xi_3^3-\frac12\xi_3 \] and
        \[ |L_1+L_2+L_3|=|\xi_3(2\xi_3^2+3\xi_1\xi_3+3\xi_1^2-1)|\sim |\xi_3|^3 .\]
        Hence, similar to \textbf{Case A1.2.2}, one has
        \[\la L_3\ra^{3-4\si}\la L_1+L_2\ra^{2\si}\gtrsim \la L_1+L_2+L_3\ra^{\frac23}\]
        and
              \begin{align*}
          \mbox{\eqref{bi4}} \lesssim  \int_{|\xi_1|\leq 1} \frac{|\xi_3|^2}{\la L_1+L_2+L_3\ra^{\frac23}} d
          \xi_1 \lesssim 1.
        \end{align*}
   
    \item{\bf Case B1.2.3} If $(\xi_1,\xi_2,\xi_3)$ lies in $\mb{(c)}$, that is, $\xi_1\leq 0$, $\xi_2\geq0$, $\xi_3\leq 0$. Then,
        \[ L_1+L_2=  -3\xi_3\xi_1^2-3\xi_3^2\xi_1 -L_3, \quad L_3= \tau_3+\xi_3^3-\frac12\xi_3.\] 
       Then, we can just repeat the proof in Case \textbf{A1.2.1}.
  \end{itemize}
    \item{\bf Case B1.3} If $|\xi_2|> 2$, $\la L_2 \ra >\la L_3\ra$, then, for $|\xi_1|\leq 1$, one has $|\xi_2|\sim |\xi_3|$. Thus, it then suffices to bound
    \begin{equation*}
        \sup_{\xi_2,\tau_2} \frac{|\xi_2|^2}{\la L_2 \ra^{3-4\si}}\int \frac{d\xi_1}{\la L_1+L_3\ra^{2\si}},
     \end{equation*}
        which is similar to \textbf{Case B1.2}. therefore omitted. \vskip\baselineskip
  \item{\bf Case B2.} $|\xi_2|\leq 1$. The proof for this case is also similar to \textbf{Case A1} and \textbf{Case B1}.

      \vskip\baselineskip
  \item{\bf Case B3.} $|\xi_1|$, $|\xi_2|>1$ and $\la L_1\ra=\mbox{MAX}$.\\
      Similar to the \textbf{Case A3}, it suffices to bound
      \begin{equation}\label{L21}
\sup_{\xi_1,\tau_1}\frac{\la \xi_1\ra^{2\rho}}{\la L_1\ra^{4-6\si}}\int\frac{|\xi_3|^2\la \xi_2\ra^{2\rho}}{\la \xi_3\ra^{2\rho}\la L_2+L_3\ra^{2\si}}  d\xi_2,
\end{equation}
with considering $(\xi_1,\tau_1)$ fixed in the integral \eqref{L21} and
\begin{align*}
L_2+L_3=&\tau_2+|\xi_2|^3-\frac12|\xi_2|+\tau_3-|\xi_3|^3+\frac12|\xi_3|\nonumber\\
=& |\xi_2|^3-\frac12|\xi_2|-|\xi_3|^3+\frac12|\xi_3|-\tau_1.
\end{align*}
\end{itemize}
\begin{itemize}
  \item{\bf Case B3.1.} If $(\xi_1,\xi_2,\xi_3)$ lies in $\mb{(a)}$, that is, $\xi_1\geq 0$, $\xi_2\geq0$, $\xi_3\leq 0$.

      In this case, we  have
\begin{equation*}
  Q_7(\xi_2):=L_2+L_3=-3\xi_1\xi_2^2-3\xi_1^2\xi_2-\tau_1-\xi_1^3
  +\frac12\xi_1,
\end{equation*}
and
\be\label{Q-7}
Q_7'(\xi_2)=-3\xi_1(2\xi_2+\xi_1), \quad  |Q_7'(\xi_2)|\gtrsim |\xi_1\xi_2|.
\ee
In addition, one also has
\[|L_1+L_2+L_3|= |-\xi_1 (2\xi_2^2+3\xi_1\xi_2+3\xi_1^2-1)|\gtrsim |\xi_1\xi_2\xi_3|,\]
which implies that $\la L_1 \ra \gtrsim \la \xi_1\xi_2\xi_3 \ra$ since $|\xi_1|, |\xi_2|>1$. Then, we can just follow the proof in \textbf{Case A3.1}.

  \item{\bf Case B3.2.} If $(\xi_1,\xi_2,\xi_3)$ lies in $\mb{(b)}$, that is, $\xi_1\geq 0$, $\xi_2\leq0$, $\xi_3\leq 0$.

      In this case, we  have
\begin{equation*}
  Q_8(\xi_2):=L_2+L_3=-2\xi_2^3-3\xi_1\xi_2^2-(3\xi_1^2-1)\xi_2-\tau_1
  -\xi_1^3  +\frac12\xi_1,
\end{equation*}
and
\be\label{Q-8}
Q_8'(\xi_2)=-6\xi_2^2-6\xi_1\xi_2-3\xi_1^2+1,
\ee
which leads to $|Q_8'(\xi_2)|\gtrsim |\xi_1\xi_2|$ for $|\xi_1|$, $|\xi_2|>1$.
In addition, one also has
\[|L_1+L_2+L_3|= |-(\xi_1+\xi_2)(2\xi_2^2+\xi_1\xi_2+2\xi_1^2-1)|\gtrsim |\xi_1\xi_2\xi_3|,\]
which implies that $\la L_1 \ra \gtrsim \la \xi_1\xi_2\xi_3 \ra$ since $|\xi_1|, |\xi_2|>1$. To bound \eqref{L21}, we can just follow the proof in \textbf{Case A3.1}.
  \item{\bf Case B3.3.} If $(\xi_1,\xi_2,\xi_3)$ lies in $\mb{(c)}$, that is, $\xi_1\leq 0$, $\xi_2\geq0$, $\xi_3\leq 0$.

      In this case, we  have
\begin{equation*}
  Q_9(\xi_2):=L_2+L_3=-3\xi_1\xi_2(\xi_1+\xi_2)-\tau_1
  -\xi_1^3  +\frac12\xi_1,
\end{equation*}
and
\be\label{Q-9}
Q_9'(\xi_2)=-3\xi_1(2\xi_2+\xi_1).
\ee
In addition, one also has
\[|L_1+L_2+L_3|= |-3\xi_1\xi_2(\xi_1+\xi_2)|\gtrsim |\xi_1\xi_2\xi_3|,\]
which implies that $\la L_1 \ra \gtrsim \la \xi_1\xi_2\xi_3 \ra$. To bound \eqref{L21}, we can just follow the proof in \textbf{Case A3.3}.
\end{itemize}
\vskip\baselineskip
\begin{itemize}
  \item{\bf Case B4.} $|\xi_1|$, $|\xi_2|>1$ and $\la L_2\ra=\mbox{MAX}$.
      It follows that
          \[\la L_1\ra^\frac12 \la L_2\ra^\frac12 \la L_3\ra^{1-\si}\geq \la L_2\ra^{2-3\si} \la L_1\ra^\si \la L_3\ra^{\si}.\]
      Then, similar to the \textbf{Case A3}, it suffices to bound
      \begin{equation}\label{L31}
\sup_{\xi_2,\tau_2}\frac{\la \xi_2\ra^{2\rho}}{\la L_2\ra^{4-6\si}}\int\frac{|\xi_3|^2\la \xi_1\ra^{2\rho}}{\la \xi_3\ra^{2\rho}\la L_1+L_3\ra^{2\si}}  d\xi_1,
\end{equation}
with considering $(\xi_2,\tau_2)$ fixed in the integral \eqref{L31} and
\begin{align*}
L_1+L_3=&\tau_2-|\xi_2|^3+\frac12|\xi_2|+\tau_3-|\xi_3|^3+
\frac12|\xi_3|\nonumber\\
=&- |\xi_2|^3+\frac12|\xi_2|-|\xi_3|^3+\frac12|\xi_3|-\tau_1.
\end{align*}

  \item{\bf Case B4.1.} If $(\xi_1,\xi_2,\xi_3)$ lies in $\mb{(a)}$, that is, $\xi_1\geq 0$, $\xi_2\geq0$, $\xi_3\leq 0$.

In this case, we  have
\begin{equation*}
  Q_{10}(\xi_1):=L_1+L_3=-2\xi_1^3 -3\xi_2\xi_1^2-(3\xi_2^2-1)\xi_1-\tau_2
  -\xi_2^3  +\frac12\xi_2,
\end{equation*}
and
\be\label{Q-10}
Q_{10}'(\xi_1)=-6\xi_1^2-6\xi_1\xi_2-3\xi_2^2+1,
\ee
with $|Q_{10}'(\xi_1)|\gtrsim |\xi_1\xi_2|$ since $|\xi_1|$, $|\xi_2|>1$.
In addition, one also has
\[|L_1+L_2+L_3|= |-2\xi_1^3 -3\xi_2\xi_1^2-3\xi_2^2\xi_1+\xi_1|\gtrsim |\xi_1\xi_2\xi_3|,\]
which implies that $\la L_1 \ra \gtrsim \la \xi_1\xi_2\xi_3 \ra$ for $|\xi_1|$, $|\xi_2|>1$. To bound \eqref{L31}, we can just follow the proof in \textbf{Case A3.1}.

  \item{\bf Case B4.2.} If $(\xi_1,\xi_2,\xi_3)$ lies in $\mb{(b)}$, that is, $\xi_1\geq 0$, $\xi_2\leq0$, $\xi_3\leq 0$.

      In this case, we  have
\begin{equation*}
  Q_{11}(\xi_1):=L_1+L_3=-2\xi_1^3 -3\xi_2\xi_1^2-(3\xi_2^2-1)\xi_1-\tau_2
  -\xi_2^3  +\frac12\xi_2,
\end{equation*}
and
\be\label{Q-11}
Q_{11}'(\xi_1)=-6\xi_1^2-6\xi_1\xi_2-3\xi_2^2+1,
\ee
with $|Q_{11}'(\xi_1)|\gtrsim |\xi_1\xi_2|$ since $|\xi_1|$, $|\xi_2|>1$.
In addition, one also has
\[|L_1+L_2+L_3|= |-(\xi_1+\xi_2)(2\xi_1^2+\xi_1\xi_2+2\xi_2^2-1)|\gtrsim |\xi_1\xi_2\xi_3|,\]
which implies that $\la L_1 \ra \gtrsim \la \xi_1\xi_2\xi_3 \ra$ for $|\xi_1|$, $|\xi_2|>1$. To bound \eqref{L31}, we can just follow the proof in \textbf{Case A3.1}.
  \item{\bf Case B4.3.} If $(\xi_1,\xi_2,\xi_3)$ lies in $\mb{(c)}$, that is, $\xi_1\leq 0$, $\xi_2\geq0$, $\xi_3\leq 0$.

        In this case, we  have
\begin{equation*}
  Q_{12}(\xi_1):=L_1+L_3=-3\xi_1\xi_2(\xi_1+\xi_2)-\tau_2
  -\xi_2^3  +\frac12\xi_2,
\end{equation*}
and
\be\label{Q-12}
Q_{12}'(\xi_1)=-3\xi_2(2\xi_1+\xi_2).
\ee
In addition, one also has
\[|L_1+L_2+L_3|=| -3\xi_1\xi_2(\xi_1+\xi_2)|\gtrsim |\xi_1\xi_2\xi_3|,\]
which implies that $\la L_1 \ra \gtrsim \la \xi_1\xi_2\xi_3 \ra$. To bound \eqref{L31}, we can just follow the proof in \textbf{Case A3.3}.
      \end{itemize}
      \vskip\baselineskip
      \begin{itemize}
        \item {\bf Case B5.} $|\xi_1|$, $|\xi_2|>1$ and $\la L_3\ra=\mbox{MAX}$.\\
             Similar to the proof in \textbf{Case A5}, we have
       \[\la L_1\ra^\frac12 \la L_2\ra^\frac12 \la L_3\ra^{1-\si}\geq \la L_3\ra^{2-3\si} \la L_1\ra^\si \la L_2\ra^{\si}.\]
Hence, it suffices to show the bound for
\begin{equation}\label{L41}
\sup_{\xi_3,\tau_3}\frac{|\xi_3|^2}{\la \xi_3\ra^{2\rho}\la L_3\ra^{4-6\si}}\int\frac{\la \xi_1\ra^{2\rho}\la \xi_2\ra^{2\rho}}{ \la L_1+L_2\ra^{2\si}}  d\xi_1,
\end{equation}
with considering $(\xi_3,\tau_3)$ fixed in the integral \eqref{L41} and
\begin{align*}
L_1+L_2=&\tau_1-|\xi_1|^3+\frac12|\xi_1|+\tau_2+|\xi_2|^3-\frac12|\xi_2|\nonumber\\
=& -|\xi_1|^3+\frac12|\xi_1|+|\xi_2|^3-\frac12|\xi_2+\tau_3.
\end{align*}
        \item {\bf Case B5.1.} If $(\xi_1,\xi_2,\xi_3)$ lies in $\mb{(a)}$, that is, $\xi_1\geq 0$, $\xi_2\geq0$, $\xi_3\leq 0$.

        In this case, we  have
\begin{equation*}
  Q_{13}(\xi_1):=L_1+L_2=-2\xi_1^3 -3\xi_3\xi_1^2-(3\xi_3^2-1)\xi_1-\tau_3
  -\xi_3^3  +\frac12\xi_3,
\end{equation*}
and
\be\label{Q-13}
Q_{13}'(\xi_1)=-6\xi_1^2-6\xi_1\xi_3-3\xi_3^2+1,
\ee
with $|Q_{13}'(\xi_1)|\gtrsim |\xi_1\xi_2|$ since  $|\xi_1|>1$.
In addition, one also has
\[|L_1+L_2+L_3|= |-2\xi_1^3 -3\xi_3\xi_1^2-(3\xi_3^2-1)\xi_1|\gtrsim |\xi_1\xi_2\xi_3|,\]
which implies that $\la L_1 \ra \gtrsim \la \xi_1\xi_2\xi_3 \ra$. To bound \eqref{L41}, we can just repeat the proof in \textbf{Case A3.1}.
        \item {\bf Case B5.2.} If $(\xi_1,\xi_2,\xi_3)$ lies in $\mb{(b)}$, that is, $\xi_1\geq 0$, $\xi_2\leq0$, $\xi_3\leq 0$.

        In this case, we  have
\begin{equation*}
  Q_{14}(\xi_1):=L_1+L_2=3\xi_1\xi_3(\xi_1+\xi_3)-\tau_3
  +\xi_3^3  -\frac12\xi_3,
\end{equation*}
and
\be\label{Q-14}
Q_{14}'(\xi_1)=3\xi_3(2\xi_1+\xi_3).
\ee
In addition, one also has
\[|L_1+L_2+L_3|=| \xi_3(2\xi_3^2+3\xi_1\xi_3+3\xi_1^2-1)|\gtrsim |\xi_1\xi_2\xi_3|,\]
which implies that $\la L_1 \ra \gtrsim \la \xi_1\xi_2\xi_3 \ra$ for $|\xi_1|>1$. To bound \eqref{L41}, we can just repeat the proof in \textbf{Case A5.1}.
       \item{\bf Case B5.3.} If $(\xi_1,\xi_2,\xi_3)$ lies in $\mb{(c)}$, that is, $\xi_1\leq 0$, $\xi_2\geq0$, $\xi_3\leq 0$.

        In this case, we  have
\begin{equation*}
  Q_{15}(\xi_1):=L_1+L_2=-3\xi_1\xi_3(\xi_1+\xi_3)-\tau_3
  -\xi_3^3  +\frac12\xi_3,
\end{equation*}
and
\be\label{Q-15}
Q_{15}'(\xi_1)=-3\xi_3(2\xi_1+\xi_3).
\ee
In addition, one also has
\[|L_1+L_2+L_3|= |-3\xi_1\xi_3(\xi_1+\xi_3)|\gtrsim |\xi_1\xi_2\xi_3|,\]
which implies that $\la L_1 \ra \gtrsim \la \xi_1\xi_2\xi_3 \ra$. To bound \eqref{L41}, we can just repeat the proof in \textbf{Case A5.1}.
      \end{itemize}
      The proof is now complete.
 \end{proof}

\section{ Local Well-posedness}
\setcounter{equation}{0}

Before stating the conclusion for the local well-posedness of IBVP \eqref{hsb}, let us define
\[\X_\a^{s,\frac12,\si}:= C(\R; H^s(\R))\cap X_\a^{s,\frac12,\si}, \]
with the norm
\[\|w\|_{\X_\a^{s,\frac12,\si}}=\l(\sup_{t\in \R} \|w(\cdot,t)\|^2_{H^s(\R)}+\|w\|_{X_\a^{s,\frac12,\si}}\r)^{\frac12}.\]
 We also denote $X^{s,\frac12,\si}_{\a,+}:=X^{s,\frac12,\si}_\a(\R^+\times \R^+ )$ and  $\X^{s,\frac12,\si}_{\a,+}:=\X_\a^{s,\frac12,\si}(\R^+\times \R^+ )$.
 \begin{thm}\label{conditional1}
 Let $-\frac34<s\leq -\frac12$   be given. There exist $\eps=\eps(s)>0$ and $ \sigma_0=\sigma_0(s)\in (\frac12,1)$ such that if
 \[(\varphi,\psi,\vec{h})\in H^s(\R^+) \times H^{s-1}(\R^+)\times \H^s(\R^+)\]
 and
 \be\label{smallness of initial data}
 |\a |\leq \eps,  \ \ \|(\varphi, \psi)\|_{H^s(\R^+)\times H^{s-1}(\R^+)}\leq \eps, \\ \|\vec{h}\|_{\H^s(\R^+)}\leq \eps,\ee
  then the IBVP \eqref{hsb} admits a unique solution
 $u\in \X_{\a,+}^{s,\frac12,\si}$
 for any $ \sigma\in(\frac12,\sigma_0] $ in an interval with size $1$
and the corresponding solution map is real analytic.
 \end{thm}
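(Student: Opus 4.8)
The plan is to realize the solution of \eqref{hsb} as a fixed point of the integral operator suggested by Proposition \ref{represent}. For $u\in\X_{\a,+}^{s,\frac12,\si}$, let $u^*$ denote a fixed extension of $u$ to $\R^2$ with $\|u^*\|_{X_\a^{s,\frac12,\si}}\lesssim\|u\|_{X_{\a,+}^{s,\frac12,\si}}$, and set $f^* := -\big((u^*)^2\big)$, so that $f^*_{xx}$ is the forcing term on $\R^2$. Define
\[
\Gamma(u)(x,t):= \eta(t)\Big[W_R(\varphi^*,\psi^*) + \int_0^t W_R(0,f^*)(\cdot,t-\tau)\,d\tau + \Phi_{bdr}\big(\vec h-\vec p-\vec q\big)\Big](x,t),
\]
where $\vec p,\vec q$ are built from $W_R(\varphi^*,\psi^*)$ and the Duhamel term as in \eqref{p0}, \eqref{q0}, and $\Phi_{bdr}$ is the extended boundary operator from Lemma \ref{lemma3}. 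By Proposition \ref{represent}, a fixed point of $\Gamma$ restricted to $\R^+\times(0,1)$ solves \eqref{hsb} on a time interval of size $1$. Because the equation is linear in the data and quadratic in $u$, once the contraction estimate is in place the solution map is automatically real analytic, and uniqueness follows from the contraction on the ball.

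The key steps, in order: (i) Bound the free term $\eta(t)W_R(\varphi^*,\psi^*)$ in $\X^{s,\frac12,\si}$ by $\|\varphi\|_{H^s(\R^+)}+\|\psi\|_{H^{s-1}(\R^+)}$ using \eqref{R1} of Lemma \ref{Lemma, lin est} together with $\|\varphi^*\|_{H^s(\R)}\le\|\varphi\|_{H^s(\R^+)}$ (zero extension is bounded on $H^s$ for $|s|<1/2$). (ii) Bound the Duhamel term $\eta(t/1)\int_0^t W_R(0,f^*)\,d\tau$ in $\X_\a^{s,\frac12,\si}$ by $\big\|(\xi^2\widehat{f^*}/\phi)^\vee\big\|_{X_\a^{s,\si-1}}$ via \eqref{R2}, then invoke the bilinear estimate Theorem \ref{bilinear} with $f^*=-(u^*)^2$ to get the bound $\lesssim\|u\|_{X_{\a,+}^{s,\frac12,\si}}^2$; here one checks $\|f^*\|$ is measured in exactly the space $X_\a^{s,\si-1}$ appearing on the right of \eqref{R2}, after writing $s+2$ of Theorem \ref{bilinear} against the $\xi^2/\phi(\xi)\sim\la\xi\ra^{-1}$ gain. (iii) Bound $\Phi_{bdr}(\vec h-\vec p-\vec q)$ using \eqref{bd1}--\eqref{bd} of Lemma \ref{lemma3}; this reduces to estimating $\vec p$ and $\vec q$ in $\H^s(\R^+)$. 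For $\vec p$ one uses the trace estimate (second line of Lemma \ref{R}) applied to $W_R(\varphi^*,\psi^*)$: $\|p_j\|_{H^{(s-j+1)/3}}\lesssim\|\varphi\|_{H^s}+\|\psi\|_{H^{s-1}}$ for $j=0,2,4$. For $\vec q$ one uses Proposition \ref{kato} with the forcing $f^*=-(u^*)^2$; this is precisely why Proposition \ref{kato} was proved at the slightly relaxed regularity $X_\a^{s+2,\si-1}$, and again Theorem \ref{bilinear} converts that into $\lesssim\|u\|_{X_{\a,+}^{s,\frac12,\si}}^2$. (iv) Assemble: on the ball $B_R=\{\|u\|_{\X_{\a,+}^{s,\frac12,\si}}\le R\}$ one gets $\|\Gamma(u)\|\le C_0\eps + C_1 R^2$; choosing $R=2C_0\eps$ and $\eps$ small enough that $C_1 R<\tfrac12$ makes $\Gamma$ map $B_R$ into itself. (v) The difference estimate $\|\Gamma(u)-\Gamma(v)\|\lesssim (\|u\|+\|v\|)\|u-v\|\le 2C_1 R\|u-v\|$ follows from the same linear estimates applied to $f^*=-(u^{*2}-v^{*2})$ and bilinearity (polarize Theorem \ref{bilinear}), giving a contraction for $\eps$ small. (vi) Finally, translate back: the smallness hypothesis \eqref{smallness of initial data} is exactly what the rescaling in Section 2 provides, so Theorem \ref{conditional} follows from Theorem \ref{conditional1}.

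The main obstacle is step (ii)--(iii): making the spaces match up exactly so that the output of the linear estimates \eqref{R2} and Proposition \ref{kato} feeds into Theorem \ref{bilinear} with no loss, in particular keeping the critical exponent $b=\tfrac12$ on the left while only paying $\si-1<-\tfrac12$ on the forcing. One must verify that the extension $u^*$ can be chosen so that $f^*=-(u^*)^2$ is itself a legitimate extension of $-(u|_{\R^+\times(0,1)})^2$ and that restricting the whole construction back to $\R^+\times(0,1)$ is consistent (this is the standard but delicate point that the nonlinearity commutes with restriction only on the quarter plane, so all bilinear estimates are performed on the full line and the quotient norm \eqref{quotient norm} is used at the end). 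A secondary technical point is that $\sigma_0$ must be taken as the minimum of the thresholds coming from Theorem \ref{bilinear} and from Lemma \ref{lemma3}, and one must check the admissible range $(\tfrac12,\sigma_0]$ is nonempty for every $s\in(-\tfrac34,-\tfrac12]$, which is guaranteed by $\sigma_0=\tfrac{7}{12}-\tfrac{\rho}{9}>\tfrac12$ when $\rho=-s<\tfrac34$.
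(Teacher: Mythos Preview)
Your proposal is correct and follows essentially the same route as the paper's proof: a contraction-mapping argument for the operator $\Gamma$ built from Proposition \ref{represent}, with the free piece handled by Lemma \ref{Lemma, lin est}, the Duhamel piece by \eqref{R2} together with Theorem \ref{bilinear}, and the boundary correction by Lemma \ref{lemma3} combined with the trace bounds of Lemma \ref{R} (for $\vec p$) and Proposition \ref{kato} plus Theorem \ref{bilinear} (for $\vec q$). Your identification of the crucial matching of norms (the $X_\a^{s+2,\si-1}$ output of Proposition \ref{kato} feeding into Theorem \ref{bilinear}) and of the admissible $\sigma$-range is exactly what the paper exploits; the only cosmetic differences are that the paper writes $f=u^2$ rather than $-(u^*)^2$ and organizes the estimates by first bounding $\|\Gamma(u)\|_{H^s}$ and $\|\Gamma(u)\|_{X_{\a,+}^{s,\frac12,\si}}$ separately before combining, with the explicit choice $C=2C_1$, $\eps=\tfrac{1}{64C_1^2}$.
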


 \begin{proof}
  Let $\eps\in(0,1)$ and $ \sigma_0\in(\frac12,1) $ be constants which will be determined later.  For given 
  \[(\varphi,\psi,\vec{h})\in Y_s:= H^s(\R^+) \times H^{s-1}(\R^+)\times \H^s(\R^+),\]
  with
 \[
 \|(\varphi, \psi)\|_{H^s(\R^+)\times H^{s-1}(\R^+)}\leq \eps, \ \ \|\vec{h}\|_{\H^s(\R^+)}\leq \eps,\] 
 and for any $ \sigma\in(\frac12,\sigma_0] $, let
\begin{equation*}
    S_{C,\eps}=\l\{u\in \X^{s,\frac12,\si}_{\a,+}, \|u\|_{\X^{s,\frac12,\si}_{\a,+}}\leq CE_0\r\},
\end{equation*}
where
\be\label{small E_0}
E_0:=\|(\varphi, \psi)\|_{H^s(\R^+)\times H^{s-1}(\R^+)}+\|\vec{h}\|_{\H^s(\R^+)}\leq 2\eps.\ee
Then the set $S_{C,\eps}$ is a convex, closed and bounded subset of $\X^{s,\frac12,\si}_{\a,+}$.
Given $u\in S_{C,\eps}$, we define
\begin{align*}
\Gamma(u)(x,t)=\eta(t)W_R(\varphi^*,\psi^*)&+\eta(t)W_{bdr}(\vec{h}-\vec{p})\\&
+\eta(t)
\left(\int^t_0 [W_R(0,f)](x,t-t')dt'-W_{bdr}(\vec{q})\right),
\end{align*}
where $f= u^2$, $\vec{p}$   and
$\vec{q}$  are as defined in Proposition \ref{represent}.
It  can then  be verified
that $\Gamma(u)(x,t)$ solves the IBVP in $[0,1]$.
We  will show that $\Gamma$ is a contraction map from $S_{C,\eps}$ to $S_{C,\eps}$ for proper $C$ and $\eps$.  Applying Lemma \ref{R} and \ref{lemma3},  Proposition \ref{kato}   leads to
\begin{align*}
\|\Gamma(u)\|_{H^s(\R^+)}\lesssim &\|\eta(t)W_R(\varphi^*,\psi^*)\|_{H_x^s(\R^+)}+\|\eta(t)W_{bdr}(\vec{h}-\vec{p})\|_{H_x^s(\R^+)}\\
&+\left\|\eta (t)
\left(\int^t_0 [W_R(0,f)](x,t-t')dt'-W_{bdr}(\vec{q})\right)\right\|_{H_x^s(\R^+)}\\
\lesssim & \|(\varphi,\psi,\vec{h})\|_{Y_s}\\
&+\left\|\eta(t)
\left(\int^t_0 [W_R(0,f)](x,t-t)dt'\right)\right\|_{X_{\a}^{s,\sigma}} + \left\|   \eta(t)W_{bdr}(\vec{q})  \right\|_{H^{s}_{x}(\m{R}^+)}.
\end{align*}
According to Lemma \ref{for}, Proposition \ref{kato} and Theorem \ref{bilinear}, there exsits some $ \sigma_0=\sigma_0(s)\in(\frac12,1) $ such that for any $\si\in(\frac12,\si_0]$,
\begin{align*}
\left\|\eta(t)\int^t_0 [W_R(0,f)](x,t-t')dt'\right\|_{X_{\a}^{s,\sigma}}\lesssim & \left\|\l(\frac{\xi^2\widehat{f}}{\phi(\xi)}\r)^\vee\right\|_{X_{\a}^{s,\si-1}}\\
\lesssim  & \left\|\l(\frac{\xi^2\widehat{f}}{\phi(\xi)}\r)^\vee\right\|_{X_{\a}^{s+2,\si-1}}
\leq C_1\|u\|^2_{X_{\a,+}^{s,\frac12,\si}}.
\end{align*}
 Moreover, according to Lemma \ref{lemma3}, Proposition \ref{kato}, and  Theorem \ref{bilinear},
\begin{align*}
 \left\|   \eta(t)W_{bdr}(\vec{q})  \right\|_{H^{s}_{x}(\m{R}^+)} \lesssim  \|\vec{q}\|_{{\mathcal H}^s} &\lesssim \left\|\l(\frac{\xi^2\widehat{f}}{\phi(\xi)}\r)^\vee\right\|_{X_{\a}^{s+2,\si-1}}
\leq  C_1\|u\|^2_{X_{\a,+}^{s,\frac12,\si}}.
\end{align*}
Hence,
\[\|\Gamma(u)\|_{H^s(\R^+)}\leq C_1\|(\varphi,\psi,\vec{h})\|_{Y_s}+ 2C_1 \|u\|^2_{X_{\a,+}^{s,\frac12,\si}}.\]
Furthermore, we have,
\begin{align*}
\|\Gamma(u)\|_{X_{\a,+}^{s,\frac12,\si}}\lesssim &\|\eta(t)W_R(\varphi^*,\psi^*)\|_{X_{\a}^{s,\frac12,\si}}+
\|\eta(t)W_{bdr}(\vec{h}-\vec{p})\|_{X_{\a}^{s,\frac12,\si}}\\
&+\left\|\eta (t)
\left(\int^t_0 [W_R(0,f)](x,t-t')dt'-W_{bdr}(\vec{q})\right)\right\|_{X_{\a}^{s,\frac12,\si}}\\
\lesssim & \|(\varphi,\psi,\vec{h})\|_{Y_s}\\
&+\left\|\eta (t)
\left(\int^t_0 [W_R(0,f)](x,t-t')dt'-W_{bdr}(\vec{q})\right)\right\|_{X_{\a}^{s,\frac12,\si}}.
\end{align*}
Again, according to Lemma \ref{for}, \ref{Lemma, lin est} and \ref{lemma3}, Proposition \ref{kato}, and  Theorem \ref{bilinear}, one has,
\begin{align*}
\left\|\eta(t)
\int^t_0 [W_R(0,f)](x,t-\tau)d\tau \right\|_{X_{\a}^{s,\frac12,\si}}\lesssim & \left\|\l(\frac{\xi^2\widehat{f}}{\phi(\xi)}\r)^\vee\right\|_{X_{\a}^{s,a}}
\leq C_1\|u\|^2_{X^{s,\frac12,\si}_{\a,+}},
\end{align*}
and
\[
\left\|\eta (t)[W_{bdr}(\vec{q})](x,t)\right\|_{X_{\a}^{s,\frac12,\si}} \lesssim  \|\vec{q}\|_{{\mathcal H}^s} \leq C_1\|u\|^2_{X^{s,\frac12,\si}_{\a,+}}.
\]
Thus,
\[\|\Gamma(u)\|_{X_{\a,+}^{s,\frac12,\si}}\leq C_1\|(\varphi,\psi,\vec{h})\|_{Y_s}+2 C_1 \|u\|^2_{X^{s,\frac12,\si}_{\a+}}.\]
It follows that
\[\|\Gamma(u)\|_{\X^{s,\frac12,\si}_{\a,+}}\leq C_1 E_0 + 2C_1 C^2 E^2_0.\]
By choosing $C=2C_1$ and $\eps=\frac{1}{64 C_1^2}$,  it then follows from (\ref{small E_0}) that
\[\|\Gamma(u)\|_{\X^{s,\frac12,\si}_{\a,+}}\leq CE_0.\]
Similar argument can be drawn to establish  $$\|\Gamma(u)-\Gamma(v)\|_{\X^{s,\frac12,\si}_{\a,+}}\leq \frac12 \|u-v\|_{\X^{s,\frac12,\si}_{\a,+}}, $$
 with $u,v\in {X^{s,\frac12,\si}_{\a,+}}$. Hence, the map $\Gamma$ is a contraction.  Because $\eta\equiv 1$ on $(0,1)$, $u$ is the unique solution to the IBVP \eqref{hsb} on a time of size $1$. Therefore, we conclude the proof for Theorem \ref{conditional}.
 \end{proof}

 {\small

}

%
%

\end{document}